\newcommand{\bel}[1]{\begin{equation}\label{#1}}
\newcommand{\be}{\begin{equation}}
\newcommand{\ba}{\begin{eqnarray}}
\newcommand{\ea}{\end{eqnarray}}
\newcommand{\qe}{\end{equation}}
\newcommand{\R}{{\mathbb R}}
\newcommand{\N}{{\mathbb N}}
\newcommand{\Z}{{\mathbb Z}}
\newcommand{\Aut}{{\mathrm Aut}^{D}}
\newcommand{\aut}{\mathrm Aut}
\newcommand{\ess}{{\mathrm ess}}
\newcommand{\I}{{\mathrm I}}
\newcommand{\Hmm}[1]{\leavevmode{\marginpar{\tiny%
$\hbox to 0mm{\hspace*{-0.5mm}$\leftarrow$\hss}%
\vcenter{\vrule depth 0.1mm height 0.1mm width \the\marginparwidth}%
\hbox to
0mm{\hss$\rightarrow$\hspace*{-0.5mm}}$\\\relax\raggedright #1}}}
\theoremstyle{theorem}
\newtheorem{thm}{Theorem}[section]
\newtheorem{satz}{Proposition}[section]
\theoremstyle{example}
\newtheorem{example}{Example}[section]
\theoremstyle{corollary}
\newtheorem{coro}{Corollary}[section]
\theoremstyle{lemma}
\newtheorem{lemma}{Lemma}[section]
\theoremstyle{definition}
\newtheorem{defi}{Definition}[section]
\theoremstyle{proof}
\theoremstyle{remark}
\newtheorem{rem}{Remark}[section]
\begin{document}

\title[Dirichlet $p$-Laplacian eigenvalues and Cheeger constants]{Dirichlet $p$-Laplacian eigenvalues and Cheeger constants on symmetric graphs}

\author{Bobo Hua}
\address{School of Mathematical Sciences, LMNS, Fudan University, Shanghai 200433, China.}
\email{bobohua@fudan.edu.cn}

\author{Lili Wang}
\address{School of Mathematical Sciences, Fudan University, Shanghai 200433, China.}
\email{lilyecnu@outlook.com}
\begin{abstract}
{In this paper, we study eigenvalues and eigenfunctions of $p$-Laplacians with Dirichlet boundary condition on graphs. We characterize the first eigenfunction (and the maximum eigenfunction for a bipartite graph) via the sign condition. By the uniqueness of the first eigenfunction of $p$-Laplacian, as $p\to 1,$ we identify the Cheeger constant of a symmetric graph with that of the quotient graph. By this approach, we calculate various Cheeger constants of spherically symmetric graphs.}
\end{abstract}
\maketitle
\section{Introduction}
The spectrum of the Laplacian on a domain in the Euclidean space was extensively studied in the literature, see e.g. \cite{Courant-Hilbert53, Reed-Simon78}. There were many far-reaching generalizations on Riemannian manifolds, see \cite{Chavel84, SchoenYau94}.

In 1970, Cheeger \cite{Cheeger70} introduced an isoperimetric constant, now called Cheeger constant, on a compact manifold to estimate the first non-trivial eigenvalue of the Laplace-Beltrami operator, see also \cite{Yau75,Kawohl-Fridman03}. A graph is a combinatorial structure consisting of vertices and edges. Cheeger's estimate was generalized to graphs by Alon-Milman \cite{Alon-Milman85} and Dodziuk \cite{Dodziuk84}, respectively. Inspired by these results, there were many Cheeger type estimates on graphs, see e.g. \cite{Dodziuk-Kendall86, Lubotzky94, Fujiwara96,  LeeGharanTrevisan12, BauerHuaJost14, Liu15, Bauer-Keller-Wojciechowski15,Keller-Mugnolo16,Tudisco-Hein18}.  It turns out that Cheeger's estimates on graphs are useful in computer sciences \cite{Donath-Hoffman73, Ng-Jordan-Weiss2001, Bolla13}.

As elliptic operators, the $p$-Laplacians are nonlinear generalizations of the Laplacian in Euclidean spaces and Riemannian manifolds. The spectral theory of the $p$-Laplacians were studied by many authors, to cite a few \cite{Lindqvist93, Matei00,Wang12,Valtorta12,andrewsclutterbuck,NaberValtorta14,SetoWei17}.  Yamasaki \cite{Yamasaki79} proposed a discrete version of $p$-Laplacian on graphs. The spectral theory for discrete $p$-Laplacians was studied by \cite{Amghibech03,Takeuchi03,Hein-Buhler10,Chang-Shao-Zhang15,Keller-Mugnolo16}. It was well-known that the Cheeger constant is equal to the first eigenvalue of $1$-Laplaican, i.e. a Sobolev type constant, see  \cite{FedererFleming60,ChengOden97,Chung97,Li12, Chang-Shao-Zhang15,Chang16,Keller-Mugnolo16, Chang-Shao-Zhang17}. So that Cheeger's estimate reveals a connection between the first eigenvalues of $p$-Laplacians for $p=1$. In this paper, we study the spectral theory of $p$-Laplacians on graphs, and use the limit $p\to 1$ to investigate the Cheeger constant.

We recall the setting of weighted graphs. Let $(V,E)$ be a locally finite, simple, undirected graph. Two vertices $x,y$ are called neighbours, denoted by $x\sim y$, if there is an edge connecting $x$ and $y,$ i.e. $\{x,y\}\in E.$ Let $\mu: E\to \R_+, \{x,y\}\to\mu_{xy}=\mu_{yx},$ be the edge weight function. We extend $\mu$ to $V\times V$ by setting $\mu_{xy}=0$ if $\{x,y\}\notin E.$
Let $\nu: V\to \R_+, x\mapsto \nu_x,$ be the vertex measure. The weights $\mu$ and $\nu$ can be regarded as discrete measures on $E$ and $V$ respectively. We call the quadruple $G=(V,E,\nu,\mu)$ a weighted graph. For any subset $\Omega\subset V,$ $p\in [1,\infty),$ we denote by $\|u\|_{p,\Omega}:=\left(\sum_{x\in \Omega}|u(x)|^p\nu_x\right)^{\frac1p}$ the $\ell^p$ norm of a function $u$ on $\Omega.$

For a weighted graph $G$ and a finite subset $\Omega\subset V,$ we define the $p$-Laplacian, $p\in(1,\infty)$, with Dirichlet boundary condition on $\Omega.$ We denote by $\R^S$ the set of functions on $S\subset V.$ For any function $u\in \R^\Omega,$ the null-extension of $u$ is denoted by $\overline{u}\in \R^V,$ i.e. $\overline{u}(x)=u(x),$ $x\in \Omega,$ and $\overline{u}(x)=0,$ otherwise.  The $p$-Laplacian with Dirichlet boundary condition, Dirichlet $p$-Laplaican in short, on $\Omega$ is defined as
\begin{align}\begin{split}\label{Dir-p-Lap}
\Delta_p u (x):= \frac{1}{\nu_x}\sum_{y\in V}\mu_{xy}|\bar{u}(y)-\bar{u}(x)|^{p-2}\left(\bar{u}(y)-\bar{u}(x)\right), \forall x\in \Omega.
\end{split}\end{align}
We say $f\in \R^\Omega$ is an \emph{eigenfunction} (or eigenvector) pertaining to the eigenvalue $\lambda$ for the Dirichlet $p$-Laplacian on $\Omega$ if \begin{equation}\label{eigen-equation}
\Delta_{p,\Omega}f=-\lambda |f|^{p-2}f \ \ \mathrm{on}\ \Omega\quad \mathrm{and}\ f\not\equiv 0.
\end{equation} 

For any $p\in [1,\infty),$ let
$E_p:\R^{\Omega}\to \R$ be the $p$-Dirichlet functional on $\Omega,$ defined as
\begin{align}\begin{split}\label{energy-func}
E_p(u):=\frac{1}{2}\sum_{x,y\in V, x\sim y}|\bar{u}(y)-\bar{u}(x)|^p\mu_{xy}.
\end{split}\end{align} As is well-known, for $p>1,$ $f$ is an eigenfunction for the Dirichlet $p$-Laplacian on $\Omega$ if and only if $\frac{f}{\|f\|_{p,\Omega}}$ is a critical point of the functional $E_p(u)$
under the constraint $\|u\|_{p,\Omega}=1, \ u\in \R^{\Omega}.$ The critical point theory for the case $p=1$ is subtle, see e.g. \cite{Hein-Buhler10, Chang16}, for which the operator is called $1$-Laplacian. Note that the $p$-Laplacian depends on the weights $\mu$ and $\nu.$ If we choose $\nu_x=\sum_{y\sim x}\mu_{xy},$ $\forall x\in V,$ then the associated $p$-Laplacian is called normalized $p$-Laplacian. The $p$-Laplacian is a linear operator if and only if $p=2.$

In this paper, we are interested in the first eigenvalue (the maximum eigenvalue resp.), i.e. the smallest (largest resp.) eigenvalue, denoted by $\lambda_{1,p}(\Omega)$ ($\lambda_{m,p}(\Omega)$ resp.), and the associated eigenfunctions for $p$-Laplacians, $p\geq 1$.
By the well-known Rayleigh quotient characterization,
\begin{align}\begin{split}\label{eq:Rayleigh}
\lambda_{1,p}(\Omega)=\inf_{u\in \R^\Omega, u\not\equiv 0}\frac{E_p(u)}{\|u\|^p_{p,\Omega}}, \quad \lambda_{m,p}(\Omega)=\sup_{u\in \R^\Omega, u\not\equiv 0}\frac{E_p(u)}{\|u\|^p_{p,\Omega}}, \quad p\geq 1.
\end{split}\end{align}

Analogous to the continuous case \cite{Kawohl-Lindqvist06}, we obtain the following characterization of first eigenfunctions. A finite subset $\Omega\subset V$ is called connected if the induced subgraph on $\Omega$ is connected, i.e. for any two vertices in $\Omega$ there is a path in the induced subgraph on $\Omega$ connecting them.
\begin{thm}\label{First-eigen}
Let $G=(V,E,\nu,\mu)$ be a weighted graph and $\Omega$ be a finite connected subset of $V.$
Then the eigenfunction $f$ of Dirichlet $p$-Laplacian on $\Omega$ is a first eigenfunction if and only if either $f>0$ on $\Omega,$ or $f<0$ on $\Omega$.
Moreover, the first eigenfunction is unique up to the constant multiplication.
\end{thm}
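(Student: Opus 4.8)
The plan is to adapt the continuous argument of \cite{Kawohl-Lindqvist06} to the graph setting (we take $p\in(1,\infty)$ throughout). Recall first that, since the unit $\ell^p$-sphere in the finite-dimensional space $\R^\Omega$ is compact and $E_p$ is continuous, the infimum in \eqref{eq:Rayleigh} is attained; moreover testing \eqref{eigen-equation} against $f$ itself and summing by parts gives $E_p(f)=\lambda\|f\|_{p,\Omega}^p$ for any eigenfunction, so \emph{``$f$ is a first eigenfunction''} is equivalent to \emph{``$f$ minimizes the Rayleigh quotient $R(u):=E_p(u)/\|u\|_{p,\Omega}^p$''}. Because $\bigl||a|-|b|\bigr|\le|a-b|$ for $a,b\in\R$, we have $E_p(|f|)\le E_p(f)$ and $\||f|\|_{p,\Omega}=\|f\|_{p,\Omega}$, so $|f|$ is a first eigenfunction whenever $f$ is; in particular there is a nonnegative first eigenfunction $g$. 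A strong minimum principle then makes $g$ strictly positive: if $g(x_0)=0$ for some $x_0\in\Omega$, then evaluating \eqref{eigen-equation} for $g$ at $x_0$ forces $0=\Delta_{p,\Omega}g(x_0)=\frac1{\nu_{x_0}}\sum_{y\sim x_0}\mu_{x_0y}\,\bar g(y)^{p-1}\ge 0$, whence $\bar g(y)=0$ for all $y\sim x_0$; propagating this along paths inside the connected set $\Omega$ yields $g\equiv 0$, a contradiction. Hence every first eigenfunction is nowhere zero on $\Omega$.

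For the implication ``first eigenfunction $\Rightarrow$ sign-definite'': if a first eigenfunction $f$ took values of both signs on the connected set $\Omega$, then, as it never vanishes, there would be an edge $\{x_0,y_0\}$ with both endpoints in $\Omega$ and $f(x_0)>0>f(y_0)$. On that edge $\bigl||f(x_0)|-|f(y_0)|\bigr|<|f(x_0)-f(y_0)|$ strictly, while every other edge obeys the non-strict inequality above, so $E_p(|f|)<E_p(f)$; together with $\||f|\|_{p,\Omega}=\|f\|_{p,\Omega}$ this contradicts the minimality of $R(f)$. Therefore $f>0$ on $\Omega$ or $f<0$ on $\Omega$.

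The converse implication and uniqueness both rest on the \emph{hidden convexity} of the Dirichlet energy: the functional
\[
J(w):=E_p\bigl(w^{1/p}\bigr),\qquad w\in\R^\Omega,\ w\ge 0,
\]
is convex, which follows edge by edge from the elementary (but not entirely obvious) fact that $(s,r)\mapsto|s^{1/p}-r^{1/p}|^p$ is convex on $[0,\infty)^2$; for an edge with only one endpoint in $\Omega$ this map collapses, via the null-extension, to a linear function of the $\Omega$-endpoint value, so the inequality is trivial there. Granting this, let $f>0$ on $\Omega$ be an eigenfunction with eigenvalue $\lambda$, normalized so $\|f\|_{p,\Omega}=1$. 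A chain-rule computation using \eqref{eigen-equation} shows $\frac{\partial}{\partial w(x)}J(w)\big|_{w=f^p}=\lambda\nu_x$ for every $x\in\Omega$, i.e.\ $f^p$ is a constrained critical point of $J$ on the convex slice $\{w\ge 0:\sum_{x\in\Omega}w(x)\nu_x=1\}$ with multiplier $\lambda$; since $f^p$ lies in the interior of the cone $\{w\ge 0\}$, the gradient inequality for the convex function $J$ gives $J(w)\ge J(f^p)+\lambda\bigl(\sum_x w(x)\nu_x-1\bigr)$ for all $w\ge 0$, so $f^p$ is in fact a global minimizer of $J$ over that slice. As $\min J=\lambda_{1,p}(\Omega)$ and $J(f^p)=E_p(f)=\lambda$, we get $\lambda=\lambda_{1,p}(\Omega)$, i.e.\ $f$ is a first eigenfunction.

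For uniqueness, let $u,v>0$ on $\Omega$ be first eigenfunctions, normalized in $\ell^p(\Omega)$. Then $u^p$ and $v^p$ both minimize the convex functional $J$ over the slice above, so their midpoint does too, which forces equality in the edge-wise convexity inequality on every edge of $\Omega$. For an interior edge $\{x,y\}$ the map $(s,r)\mapsto|s^{1/p}-r^{1/p}|^p$ is affine on a segment only along rays through the origin, so equality forces $(u(x)^p,u(y)^p)$ and $(v(x)^p,v(y)^p)$ to be proportional, i.e.\ $u(x)/v(x)=u(y)/v(y)$; since $\Omega$ is connected, $u/v$ is constant, so $u=v$. Finally, by the sign dichotomy and the oddness of $\Delta_{p,\Omega}$ (so that $-f$ is a first eigenfunction whenever $f$ is), every first eigenfunction is a scalar multiple of this positive one, which proves uniqueness up to constant multiplication. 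The one genuinely technical ingredient is the convexity of $(s,r)\mapsto|s^{1/p}-r^{1/p}|^p$ on $[0,\infty)^2$ together with the description of its equality case; the rest is bookkeeping with the null-extension and the minimum principle.
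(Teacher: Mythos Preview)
Your proof is correct. For two of the three parts it runs parallel to the paper. The forward implication is identical: replace $f$ by $|f|$, use the strong minimum principle (the paper's Lemma~\ref{nonzero}) to get $|f|>0$, then read off sign-definiteness from $E_p(|f|)=E_p(f)$. For uniqueness the paper also forms $(u^p+v^p)^{1/p}$ and invokes the strict triangle inequality $\bigl||U|_p-|V|_p\bigr|\le |U-V|_p$ in $(\R^2,|\cdot|_p)$ with $U=(u(x),v(x))$, $V=(u(y),v(y))$; this inequality is exactly the subadditivity that, together with degree-$1$ homogeneity, makes your $J(w)=E_p(w^{1/p})$ convex, so the two uniqueness arguments are the same statement in different packaging.

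The genuine difference is the converse ``sign-definite $\Rightarrow$ first''. The paper does \emph{not} use hidden convexity there; it imports a comparison principle (Lemma~\ref{l:comparison principle}, from \cite{Kim-Chung10,Park-Chung11,Park11}): given the positive first eigenfunction $u$ and a positive eigenfunction $u_0$ with eigenvalue $\lambda>\lambda_{1,p}$, scale so $u\le u_0$, note $\Delta_p u\ge \Delta_p(\kappa u_0)$ with $\kappa=(\lambda_{1,p}/\lambda)^{1/(p-1)}<1$, deduce $u\le\kappa u_0$, and iterate to $u\le\kappa^n u_0\to 0$, a contradiction. Your route is more self-contained and unified --- the same convexity tool handles both the converse and uniqueness, and no external comparison lemma is needed --- at the cost of having to justify the convexity of $(s,r)\mapsto|s^{1/p}-r^{1/p}|^p$ and its equality case (which, as noted above, is equivalent to the $\ell^p$ triangle inequality the paper uses anyway). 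One minor slip: the chain rule gives $\partial J/\partial w(x)\big|_{w=f^p}=\tfrac{\lambda}{p}\,\nu_x$, not $\lambda\nu_x$; this does not affect the argument, since only proportionality to $\nu_x$ is needed for the Lagrange step.
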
 The first eigenfunction can be characterized via the fixed-sign condition. The uniqueness of the first eigenfunction will be crucial for our applications.

For any $U\subset V,$ we denote by $\partial U:=\{\{x,y\}\in E: x\in U, y\in V\setminus U \}$ the edge boundary of $U.$
The (Dirichlet) Cheeger constant on a finite subset $\Omega$ is defined as
\begin{equation}\label{eq:Cheegerconstant}h_{\mu,\nu}(\Omega)=\min_{\emptyset \neq U\subset \Omega}\frac{|\partial U|_\mu}{|U|_\nu},\end{equation} where $|\partial U|_\mu=\sum_{\{x,y\}\in\partial U}\mu_{xy}$ and $|U|_\nu=\sum_{x\in U}\nu_x$.
See Definition~\ref{def:Cheegerinf} for Cheeger constants, $h(G)$ and $h_{\infty}(G),$ of infinite graphs. The subset $U$ attains the minimum in \eqref{eq:Cheegerconstant} is called a Cheeger cut of $\Omega.$  For a finite graph without boundary the Cheeger constant was proven to be equal to the first nontrivial eigenvalue of $1$-Laplacian, see e.g. \cite[Proposition~4.1]{Hein-Buhler10} and \cite[Theorem~{5.15}]{Chang16}. The following is an analogous result for the Dirichlet boundary case.
\begin{satz}\label{c:1-Lap} Let $\Omega$ be a finite subset of $V.$ Then
\[\lambda_{1,1}(\Omega)=h_{\mu,\nu}(\Omega),\]
where $\lambda_{1,1}(\Omega)$ is the first eigenvalue of Dirichlet $1$-Laplacian.
\end{satz}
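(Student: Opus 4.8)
The plan is to combine the Rayleigh quotient characterization $\lambda_{1,1}(\Omega)=\inf_{u\in\R^\Omega,\,u\not\equiv 0}E_1(u)/\|u\|_{1,\Omega}$ from \eqref{eq:Rayleigh} with the coarea formula on graphs, and to prove the two inequalities $\lambda_{1,1}(\Omega)\le h_{\mu,\nu}(\Omega)$ and $\lambda_{1,1}(\Omega)\ge h_{\mu,\nu}(\Omega)$ separately. Note that since $\Omega$ is finite, there are only finitely many nonempty $U\subset\Omega$, so the infimum defining $h_{\mu,\nu}(\Omega)$ is attained at some Cheeger cut $U$.

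For the upper bound I would simply test the Rayleigh quotient with $u=\mathbf 1_U\in\R^\Omega$, where $U$ is a Cheeger cut. A direct computation gives $\|\mathbf 1_U\|_{1,\Omega}=|U|_\nu$ and $E_1(\mathbf 1_U)=\tfrac12\sum_{x\sim y}|\mathbf 1_U(y)-\mathbf 1_U(x)|\mu_{xy}=|\partial U|_\mu$, the factor $\tfrac12$ cancelling the double count of each edge of $\partial U$ in the ordered sum; hence $\lambda_{1,1}(\Omega)\le|\partial U|_\mu/|U|_\nu=h_{\mu,\nu}(\Omega)$.

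For the lower bound, fix $u\in\R^\Omega$, $u\not\equiv 0$. Since $\overline{|u|}=|\bar u|$ and $\big||\bar u(y)|-|\bar u(x)|\big|\le|\bar u(y)-\bar u(x)|$, replacing $u$ by $|u|$ does not increase $E_1$ and leaves $\|u\|_{1,\Omega}$ unchanged, so we may assume $u\ge 0$. For $t\ge 0$ put $U_t:=\{x\in\Omega:u(x)>t\}$; as $u$ is supported on the finite set $\Omega$ it is bounded, so $U_t=\emptyset$ for $t$ large. The layer-cake formula gives $\|u\|_{1,\Omega}=\int_0^\infty|U_t|_\nu\,dt$, and, writing $|\bar u(y)-\bar u(x)|=\int_0^\infty|\mathbf 1_{U_t}(y)-\mathbf 1_{U_t}(x)|\,dt$ for all $x,y\in V$ (valid because $\bar u\equiv 0$ off $\Omega$, so the superlevel sets $\{\bar u>t\}$ coincide with $U_t$ for $t\ge 0$) and interchanging the finite sum over edges with the integral, the coarea identity $E_1(u)=\int_0^\infty|\partial U_t|_\mu\,dt$ follows from the identity $\tfrac12\sum_{x\sim y}|\mathbf 1_{U_t}(y)-\mathbf 1_{U_t}(x)|\mu_{xy}=|\partial U_t|_\mu$. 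Since $|\partial U_t|_\mu\ge h_{\mu,\nu}(\Omega)|U_t|_\nu$ holds for every $t$ (by the definition of the Cheeger constant when $U_t\ne\emptyset$, and trivially otherwise), integrating in $t$ yields $E_1(u)\ge h_{\mu,\nu}(\Omega)\|u\|_{1,\Omega}$. Taking the infimum over $u$ gives $\lambda_{1,1}(\Omega)\ge h_{\mu,\nu}(\Omega)$, and the two bounds together prove the proposition; in particular the infimum in \eqref{eq:Rayleigh} for $p=1$ is attained at $\mathbf 1_U$, which justifies speaking of $\lambda_{1,1}(\Omega)$ as the first eigenvalue.

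I expect the only point requiring care to be the coarea identity $E_1(u)=\int_0^\infty|\partial U_t|_\mu\,dt$, specifically the bookkeeping for edges $\{x,y\}$ with $x\in\Omega$, $y\notin\Omega$: such an edge lies in $\partial U_t$ precisely for $t\in[0,u(x))$, which is consistent with $|\bar u(y)-\bar u(x)|=u(x)=\int_0^\infty\mathbf 1_{\{t<u(x)\}}\,dt$. All the remaining steps (the reduction to $u\ge 0$, Cavalieri's principle, and the indicator computation) are routine.
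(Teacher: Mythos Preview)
Your proof is correct and follows essentially the same route as the paper: both arguments test the Rayleigh quotient with the indicator of a Cheeger cut for the upper bound, and for the lower bound reduce to $u\ge 0$, apply the coarea/layer-cake identity to write $E_1(u)=\int_0^\infty|\partial U_t|_\mu\,dt$ and $\|u\|_{1,\Omega}=\int_0^\infty|U_t|_\nu\,dt$, and then invoke the definition of $h_{\mu,\nu}(\Omega)$ levelwise. Your explicit handling of edges crossing $\partial\Omega$ is a welcome clarification but not a departure from the paper's method.
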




For the linear normalized Laplacian on a finite graph $(V,E)$ without boundary, the maximum eigenvalue can be used to characterize the bipartiteness of the graph. Recall that a graph is called bipartite if its vertex set can be split into two subsets $V_1,V_2$ such that every edge connects a vertex in $V_1$ to one in $V_2.$ As is well-known \cite{Chung97}, the maximum eigenvalue is $2$ if and only if the graph is bipartite. More importantly, there is an involution $S:\R^V\to \R^V,$
\begin{equation}\label{absolu-op}
S(u)(x)=\left\{\begin{array}{cl}
u(x),&x\in V_1,\\
-u(x),&x\in V_2,
  \end{array}\right.\end{equation} which transfers an eigenfunction $u$ of eigenvalue $\lambda$ to an eigenfunction $S(u)$ of eigenvalue $2-\lambda.$ Similar results hold for linear normalized Laplacians with Dirichlet boudnary condition, see \cite{BauerHuaJost14}. By this result, one easily figure out the sign condition for the maximum eigenfunction via that of the first eigenfunction. However, the involution $S$ doesn't work well for the nonlinear case, i.e. $p\neq 2,$ see e.g. Example \ref{conter-exam}. By using a convexity argument, we circumvent the difficulty and give the characterization of maximum eigenfunction by the sign condition for bipartite subgraphs.
\begin{thm}\label{maxi-funct}
Let $G=(V,E,\nu,\mu)$ be a weighted graph and $\Omega$ be a finite connected bipartite subgraph of $G$. Assume $f$ is an eigenfunction of Dirichlet $p$-Laplacian on $\Omega$. Then $f$ is a maximum eigenfunction if and only if $f$ satisfies $f(x)f(y)<0$ for $x\sim y$ and $x,y\in \Omega$.
Moreover, the maximum eigenfunction is unique up to the constant multiplication.
\end{thm}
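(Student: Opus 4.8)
The plan is to reduce Theorem~\ref{maxi-funct} to Theorem~\ref{First-eigen} by constructing an explicit correspondence between maximum eigenfunctions on the bipartite subgraph $\Omega$ and first eigenfunctions of a related operator, exploiting the bipartite involution $S$ from \eqref{absolu-op} at the level of the \emph{energy functional} rather than at the level of the operator. Write $\Omega=\Omega_1\sqcup\Omega_2$ for the bipartition. Given $u\in\R^\Omega$, set $v:=S(u)$, i.e. $v=u$ on $\Omega_1$ and $v=-u$ on $\Omega_2$. The key observation is that for an edge $\{x,y\}$ with both endpoints in $\Omega$ one has $x\in\Omega_1,y\in\Omega_2$, so $|\bar u(y)-\bar u(x)|=|\bar v(y)+\bar v(x)|$, whereas for an edge with exactly one endpoint in $\Omega$ the boundary value is $0$ and the two expressions agree. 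Hence the quantity $F_p(u):=\tfrac12\sum_{x\sim y}|\bar u(x)+\bar u(y)|^p\mu_{xy}$ satisfies $F_p(u)=E_p(S(u))$, while $\|u\|_{p,\Omega}=\|S(u)\|_{p,\Omega}$. Since $S$ is a bijection on $\R^\Omega$, the Rayleigh characterization \eqref{eq:Rayleigh} gives $\lambda_{m,p}(\Omega)=\sup_{u\neq 0}E_p(u)/\|u\|_{p,\Omega}^p=\sup_{u\neq 0}F_p(u)/\|u\|_{p,\Omega}^p$, and more importantly $f$ is a maximum eigenfunction of $E_p$ iff $S(f)$ maximizes $F_p$ over the unit sphere.

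Next I would analyze the maximizers of $F_p/\|\cdot\|_{p,\Omega}^p$ directly via a convexity/rearrangement argument, which is where the bulk of the work lies. The point is that $F_p(u)\le \tfrac12\sum_{x\sim y}(|\bar u(x)|+|\bar u(y)|)^p\mu_{xy}$ by the triangle inequality, and by convexity of $t\mapsto|t|^p$ combined with the structure of $E_p(|u|)=\tfrac12\sum|\,|\bar u(y)|-|\bar u(x)|\,|^p\mu_{xy}$, one compares $F_p(u)$ against the analogous functional evaluated at $|u|$; equality forces, on each edge with both endpoints in $\Omega$, that $\bar u(x)$ and $\bar u(y)$ have opposite signs (or one vanishes). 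I would then show the maximum cannot be attained at a $u$ that vanishes somewhere on $\Omega$ or that has a ``bad'' edge (same-sign neighbours in $\Omega$): using connectedness of $\Omega$, flipping the sign of $u$ on a connected component of the ``good'' subgraph would strictly increase $F_p$, contradicting maximality — this is the analogue of the strong-maximum-principle step in the proof of Theorem~\ref{First-eigen}. Conclude that any maximum eigenfunction $f$ of $E_p$ has $S(f)$ of fixed sign on $\Omega$, equivalently $f(x)f(y)<0$ for all edges $\{x,y\}$ inside $\Omega$; conversely any eigenfunction with this alternating-sign property has $|f|>0$ on $\Omega$ and, after applying $S$, becomes a positive eigenfunction realizing the supremum $\lambda_{m,p}(\Omega)$, hence is a maximum eigenfunction.

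For uniqueness I would again transport the problem through $S$: if $f_1,f_2$ are two maximum eigenfunctions, then $g_i:=S(f_i)$ are both positive on $\Omega$ and both maximize $F_p/\|\cdot\|_{p,\Omega}^p$. Here I expect to need the strict convexity argument once more — the same mechanism that yields uniqueness in Theorem~\ref{First-eigen} — applied to the functional $F_p$ in place of $E_p$: on the set of positive functions, $F_p$ and $\|\cdot\|_{p,\Omega}^p$ interact so that the maximizer on the unit sphere is unique up to scaling (for instance by a hidden-convexity/Picone-type identity in the variable $u^p$, or by showing that any two maximizers must be proportional edge-by-edge along a spanning connected structure). Transporting back, $f_1$ and $f_2$ are proportional.

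The main obstacle I anticipate is precisely the convexity step: unlike $E_p$, the functional $F_p(u)=\tfrac12\sum|\bar u(x)+\bar u(y)|^p\mu_{xy}$ is \emph{not} invariant under adding a constant and does not vanish on constants, so the analogy with the first-eigenfunction proof is not mechanical — one has to make sure the sign-flipping/rearrangement inequalities are strict in exactly the right places, and that the boundary edges (where $\bar u$ is forced to be $0$) do not spoil the strictness. Example~\ref{conter-exam} presumably shows that the naive operator-level involution fails, so the argument must genuinely run at the level of the energy, and care is needed to check that a maximum eigenfunction is indeed a global maximizer of the Rayleigh quotient (not merely a critical point) before invoking these inequalities; this is standard for the extremal eigenvalues via \eqref{eq:Rayleigh} but should be stated explicitly.
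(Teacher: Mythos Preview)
Your overall framework --- transporting the problem through the bipartite involution $S$ at the level of the energy functional and introducing $F_p(u)=\tfrac12\sum|\bar u(x)+\bar u(y)|^p\mu_{xy}$ --- matches the paper's strategy, and your treatments of the forward direction (maximum eigenfunction $\Rightarrow$ alternating sign) and of uniqueness are essentially the paper's Proposition~\ref{opp-sign} and Lemma~\ref{maximum-unique}. (For the forward direction the ``sign-flipping'' sentence is muddled; the clean way to exclude zeros is to observe that $|S(f)|$ is itself a maximizer of $F_p$, hence a critical point, and to evaluate its Euler--Lagrange equation at a putative zero, which forces all neighbouring values to vanish by nonnegativity --- this is what the paper does.)

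The genuine gap is in the converse direction. You assert that an alternating-sign eigenfunction $f$, after applying $S$, ``becomes a positive eigenfunction realizing the supremum $\lambda_{m,p}(\Omega)$'' --- but this is precisely what must be proved. A positive \emph{critical point} of $F_p/\|\cdot\|_{p,\Omega}^p$ is not automatically a global maximizer: there is no comparison principle for the operator $u\mapsto\sum_y|\bar u(x)+\bar u(y)|^{p-2}(\bar u(x)+\bar u(y))\mu_{xy}$ analogous to Lemma~\ref{l:comparison principle}, and the $\ell^p$-triangle-inequality trick with $h=(h_1^p+h_2^p)^{1/p}$ only yields $\lambda_{m,p}\ge\lambda$ when one of the two functions is merely a critical point rather than a maximizer, so it cannot close the loop either.

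The paper fills this gap with the concavity argument of Lemmas~\ref{Equivalence}--\ref{lem:maximal values}: after the substitution $g_i=h_i^p$ (restricting to $h>0$), the constrained problem becomes that of maximizing $Q_p(g)=\tfrac12\sum|g_i^{1/p}+g_j^{1/p}|^p\mu_{ij}$ over the affine simplex $\{g>0:\sum g_i\nu_i=1\}$. A direct Hessian computation shows $(a,b)\mapsto(a^{1/p}+b^{1/p})^p$ is concave on $\R_+^2$, hence $Q_p$ is concave; therefore \emph{every} interior critical point of $Q_p$ on this affine set is a global maximum. Transporting back through $T$ and $\bar S$, any alternating-sign eigenfunction of $\Delta_p$ is a maximum eigenfunction (Theorem~\ref{pro-for-max-eigen}). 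You mention ``hidden-convexity in the variable $u^p$'' in passing for uniqueness, but this is exactly the missing ingredient for the converse, and it must be deployed there.
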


We study Cheeger constants on symmetric graphs. For a weighted graph $G=(V,E,\nu,\mu),$ an automorphism of $G$ is a graph isomorphism $g:V\to V$ satisfying
\[
\mu_{g(x)g(y)}=\mu_{xy},\nu_{g(x)}=\nu_x, \forall x,y \in V.\] The set of automorphisms of $G$ form a group, denoted by $\aut(G).$
For our purposes, we say that an infinite graph $G$ is ``symmetric" if there is a subgroup $\Gamma$ of the automorphism group acting on $G$ finitely, i.e. each orbit for the action of the group $\Gamma,$ called $\Gamma$-orbit, consists of finitely many vertices. For any $x\in V,$ we denote by $[x]$ the $\Gamma$-orbit of $x.$
We define the quotient graph $G/\Gamma$ as follows: The set of vertices consists of the $\Gamma$-orbits; two different orbits $[x],[y]$ are adjacent if there are $x'\in [x],y'\in [y]$ such that $x'\sim y',$ and the edge weight is defined as $$\quad\mu_{[x][y]}=\sum_{x_1\in [x],y_1\in [y]}\mu_{x_1y_1};$$ the vertex weight is defined as $\nu_{[x]}=\sum_{x_1\in [x]}\nu_{x_1}.$ Note that in our definition, the quotient graph $G/\Gamma$ has no self-loops, although there could be edges between vertices in one orbit in $G$.

\begin{thm}\label{quotient-graph-Chee}
Let $G=(V,E,\nu,\mu)$ be a weighted graph, and $\Gamma$ be a subgroup of the automorphism group $\aut(G)$ which acts finitely on $G.$ Then
$$h(G)=h(G/\Gamma),\ \ h_{\infty}(G)=h_{\infty}(G/\Gamma).$$
\end{thm}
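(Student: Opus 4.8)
The plan is to prove the two equalities $h(G)=h(G/\Gamma)$ and $h_\infty(G)=h_\infty(G/\Gamma)$ by a symmetrization argument, using the uniqueness of the first eigenfunction of the Dirichlet $p$-Laplacian (Theorem~\ref{First-eigen}) together with the identification $\lambda_{1,1}(\Omega)=h_{\mu,\nu}(\Omega)$ from Proposition~\ref{c:1-Lap}. First I would reduce the statement about infinite graphs to statements about finite subsets: by Definition~\ref{def:Cheegerinf}, $h(G)$ and $h_\infty(G)$ are, respectively, the infimum of $h_{\mu,\nu}(\Omega)$ over all finite $\Omega\subset V$, and the limit (supremum over exhausting sequences) of these quantities as $\Omega$ grows; the analogous statements hold on $G/\Gamma$ for finite subsets $\tilde\Omega\subset V/\Gamma$. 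So it suffices to compare, for each finite $\Gamma$-invariant $\Omega\subset V$, the quantity $\lambda_{1,p}(\Omega)$ with $\lambda_{1,p}(\Omega/\Gamma)$ on the quotient, and then let $p\to 1$.

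The heart of the argument is the identity $\lambda_{1,p}(\Omega)=\lambda_{1,p}(\Omega/\Gamma)$ for $p>1$ whenever $\Omega$ is a finite connected $\Gamma$-invariant subset of $V$. The ``$\leq$'' direction is easy: given a positive first eigenfunction $\tilde f$ on $\Omega/\Gamma$, pull it back to $\Omega$ by setting $f(x)=\tilde f([x])$; one checks directly from the definitions of $\mu_{[x][y]}$ and $\nu_{[x]}$ that $E_p(f)=E_p(\tilde f)$ and $\|f\|_{p,\Omega}^p=\|\tilde f\|_{p,\Omega/\Gamma}^p$, so the Rayleigh quotient \eqref{eq:Rayleigh} gives $\lambda_{1,p}(\Omega)\le\lambda_{1,p}(\Omega/\Gamma)$. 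For the reverse direction I would take the positive first eigenfunction $f$ on $\Omega$ and symmetrize it: since $\Gamma$ preserves $\mu,\nu$ and $\Omega$, for any $g\in\Gamma$ the function $f\circ g$ is again a first eigenfunction, hence by the uniqueness part of Theorem~\ref{First-eigen} equals $c_g f$ for some constant $c_g>0$; chasing $g,g'\mapsto g g'$ shows $g\mapsto c_g$ is a homomorphism into $(\R_{>0},\cdot)$, and since each orbit is finite (so each $g$ has finite order on $\Omega$) one forces $c_g\equiv 1$. Thus $f$ is $\Gamma$-invariant and descends to a well-defined function $\tilde f$ on $\Omega/\Gamma$ with the same energy and norm, giving $\lambda_{1,p}(\Omega/\Gamma)\le\lambda_{1,p}(\Omega)$. (One should note that if $\Omega$ is $\Gamma$-invariant but disconnected, one applies this on each connected component, or rather on the component realizing the minimum; since $h_{\mu,\nu}$ of a disconnected set is the min over components, this causes no trouble.)

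With $\lambda_{1,p}(\Omega)=\lambda_{1,p}(\Omega/\Gamma)$ in hand for all $p>1$, I let $p\to 1^+$. Here I would invoke (or prove, by a standard uniform-estimate-plus-compactness argument on the finite-dimensional space $\R^\Omega$) that $\lambda_{1,p}(\Omega)\to\lambda_{1,1}(\Omega)$ as $p\to1$; combined with Proposition~\ref{c:1-Lap} this yields $h_{\mu,\nu}(\Omega)=h_{\mu,\nu}(\Omega/\Gamma)$ for every finite connected $\Gamma$-invariant $\Omega$. It remains to pass from $\Gamma$-invariant finite subsets to all finite subsets when taking the infimum/limit defining $h(G)$ and $h_\infty(G)$: every finite $\tilde\Omega\subset V/\Gamma$ lifts to a finite $\Gamma$-invariant $\Omega\subset V$ (the preimage, finite because orbits are finite), and conversely every finite $\Omega\subset V$ is contained in a finite $\Gamma$-invariant one (its $\Gamma$-saturation), so the infima over the two families of sets agree, and similarly exhausting sequences on one side correspond to exhausting sequences on the other; this gives $h(G)=h(G/\Gamma)$ and $h_\infty(G)=h_\infty(G/\Gamma)$.

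The main obstacle I anticipate is the symmetrization step via uniqueness: one must be careful that the uniqueness in Theorem~\ref{First-eigen} is stated for connected $\Omega$, so the passage to components and the verification that the scaling cocycle $c_g$ is trivial (using finiteness of orbits, hence that $\Gamma$ acts on $\Omega$ through a finite quotient) need to be done cleanly. A secondary technical point is the $p\to1$ limit $\lambda_{1,p}(\Omega)\to\lambda_{1,1}(\Omega)$, which, while standard on a finite graph, requires a short continuity/convergence argument for the Rayleigh quotients; I would either cite it from the $1$-Laplacian literature referenced before Proposition~\ref{c:1-Lap} or prove it directly using that $\|u\|_{p,\Omega}\to\|u\|_{1,\Omega}$ and $E_p(u)\to E_1(u)$ uniformly on the (compact) unit sphere of $\R^\Omega$.
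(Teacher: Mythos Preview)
Your proposal is correct and follows essentially the same route as the paper: show $\lambda_{1,p}(\Omega)=\lambda_{1,p}(\Omega/\Gamma)$ for $p>1$ via the uniqueness of the first eigenfunction, pass to $p\to 1$ using Proposition~\ref{c:1-Lap}, and then exhaust. Two minor differences are worth noting. First, the paper dispatches $c_g=1$ more quickly than your cocycle argument: since $T$ is measure-preserving one has $\|u_1\|_{p,\Omega}=\|u_1\circ T\|_{p,\Omega}$ directly, whence $c=1$. Second, and more substantively, your treatment of the connectedness issue (``apply this on each connected component'') is not quite right as stated, because $\Gamma$ may permute the components of a disconnected $\Gamma$-invariant $\Omega$, and then the first eigenfunction is genuinely non-unique and need not be $\Gamma$-invariant. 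The paper avoids this by explicitly constructing an exhaustion $\{\Omega_i\}$ of $G$ by \emph{connected} $\Gamma$-invariant finite sets (take $\Omega_i$ to be the preimage of a ball $B_i([x_0])$ in $G/\Gamma$ and check connectedness by lifting paths), so that Theorem~\ref{First-eigen} applies on the nose; you flagged this as the main obstacle, and indeed that is where the work is.
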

\begin{rem} \begin{enumerate}\item This theorem yields that we can reduce the computation of Cheeger constants of $G$ to that of the quotient graph.
 \item Note that the Cheeger cuts of a graph are usually not unique, see e.g. Example \ref{cut-non-unique}.
  But the proof of theorem indicates that among them there is one Cheeger cut consisting of
  $\Gamma$-orbits.
So that, for the computation of Cheeger constants of a symmetric graph one can treat the orbits as integrality.
  \end{enumerate}
\end{rem}


There is a natural metric on the graph $G$, the combinatorial distance $d$, defined as
$d(x,y)=\inf\{k: \exists x=x_0\sim \cdots \sim x_k=y\}$, i.e. the length of the shortest path connecting $x$ and $y$ by assigning each edge the length one.
For the combinatorial distance $d$ of the graph,
we denote by $B_r(x)=\{y\in V: d(y,x)\leq r\}$ the ball of radius $r$ centered at $x\in V$ and by $S_r(x):=\{y\in V: d(y,x)=r\}$ the $r$-sphere centered at $x$. We call a graph $G$ is \textbf{spherically symmetric} centered at a vertex $x_0$ if for any $x,y\in S_r(x_0)$ and $r\in \mathbb N\cup \{0\}$, there exits an automorphism of $G$ which leaves $x_0$ invariant and maps $x$ to $y$, see \cite{Keller-Lenz-Wojciechowski13,Breuer-Keller13}.
Then there is an associated  subgroup $\Gamma\leq \aut(G)$ acts finitely on $G$ such that the $\Gamma$-orbits are exactly $\{S_r(x_0)\}_{r=0}^\infty$. In this case, the quotient graph $G/\Gamma$ is a ``one dimensional" model. In $G/\Gamma$, we denote by $\underline{B}_r$ the ball of radius $r$ centered at $[x_0],$ and by $\underline{A}_{r,R}=\underline{B}_R\setminus \underline{B}_{r}$ the annulus of inner radius $r$ and outer radius $R.$

\begin{thm}\label{t:spherically symmetric}
Let $G$ be a spherically symmetric graph centered at $x_0\in V$
with the associated subgroup $\Gamma$ of the automorphism group. Then
\begin{equation*}\label{e:Cheeger radial}
h(G)=\inf_{r\geq 0}\frac{|\partial \underline{B}_r|}{|\underline{B}_r|},
\end{equation*}
\begin{equation*}\label{e:radial symmetry eq3}
h_{\infty}(G)=\liminf_{r\to \infty}\inf_{R\geq r+1}\frac{|\partial \underline{A}_{r,R}|}{|\underline{A}_{r,R}|}.
\end{equation*}
Moreover, if $G$ has infinite $\nu$-measure, then
\begin{equation*}\label{e:reduction to balls}
h_{\infty}(G)=\liminf_{r\to \infty}\frac{|\partial \underline{B}_r|}{|\underline{B}_r|}.
\end{equation*}
\end{thm}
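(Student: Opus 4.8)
The plan is to push both Cheeger constants down to the quotient graph $G/\Gamma$ via Theorem~\ref{quotient-graph-Chee} and then exploit that this quotient is a half\nobreakdash-line. Since $G$ is spherically symmetric centered at $x_0$ and the $\Gamma$-orbits are exactly the spheres $S_r(x_0)$, the graph $G/\Gamma$ is an infinite ray: its vertices are $v_0=[x_0],v_1,v_2,\dots$ with $v_r$ corresponding to $S_r(x_0)$; edges of $G$ join only vertices at combinatorial distance $\le 1$, and edges inside a single sphere become self-loops, which are discarded, so the only edges of $G/\Gamma$ are $\{v_r,v_{r+1}\}$, $r\ge 0$. (Here $G$ is infinite, the case in which $h_\infty$ is defined; if $G$ were finite, $G/\Gamma$ is a finite path and only the first identity is relevant.) Under this identification $\underline B_r=\{v_0,\dots,v_r\}$, $\underline A_{r,R}=\{v_{r+1},\dots,v_R\}$, $|\partial\underline B_r|=\mu_{v_rv_{r+1}}$, $|\partial\underline A_{r,R}|=\mu_{v_rv_{r+1}}+\mu_{v_Rv_{R+1}}=|\partial\underline B_r|+|\partial\underline B_R|$, and $|\underline A_{r,R}|_\nu=|\underline B_R|_\nu-|\underline B_r|_\nu$.

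For the first formula I would observe that on a ray any nonempty finite vertex set $U$ is the disjoint union of its maximal intervals (runs of consecutive $v_k$'s), and that distinct maximal intervals share no vertex and no boundary edge; hence $|\partial U|_\mu=\sum_j|\partial I_j|_\mu$ and $|U|_\nu=\sum_j|I_j|_\nu$, so $|\partial U|_\mu/|U|_\nu$ is a mediant of the numbers $|\partial I_j|_\mu/|I_j|_\nu$ and is therefore $\ge\min_j|\partial I_j|_\mu/|I_j|_\nu$. Thus $h(G/\Gamma)$ equals the infimum of $|\partial I|_\mu/|I|_\nu$ over intervals $I$. An interval through $v_0$ is a ball $\underline B_b$; an interval $I=\{v_a,\dots,v_b\}$ with $a\ge1$ has $|\partial I|_\mu\ge\mu_{v_bv_{b+1}}=|\partial\underline B_b|$ and $|I|_\nu\le|\underline B_b|_\nu$, so its ratio is at least that of $\underline B_b$. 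Hence $h(G)=h(G/\Gamma)=\inf_{r\ge0}|\partial\underline B_r|/|\underline B_r|$.

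For the second formula I would use Definition~\ref{def:Cheegerinf} to write $h_\infty(G/\Gamma)=\lim_{r\to\infty}\inf\{|\partial U|_\mu/|U|_\nu:\ \emptyset\ne U\ \text{finite},\ U\cap\underline B_r=\emptyset\}$. For such $U$ the same interval-decomposition argument reduces the inner infimum to single intervals $\{v_a,\dots,v_b\}$ with $a\ge r+1$, each of which is an annulus $\underline A_{a-1,b}$ with $a-1\ge r$ and $b\ge a$; thus the inner infimum equals $\inf_{\rho\ge r}\inf_{R\ge\rho+1}|\partial\underline A_{\rho,R}|/|\underline A_{\rho,R}|$, and letting $r\to\infty$ turns the outer limit into $\liminf_{\rho\to\infty}$, which is the asserted formula.

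For the last formula, set $a_r:=|\partial\underline B_r|$ and $b_r:=|\underline B_r|_\nu$; the hypothesis of infinite $\nu$-measure says $b_r\uparrow\infty$. By the identities above, the second formula reads $h_\infty=\liminf_{r\to\infty}\inf_{R>r}\frac{a_r+a_R}{b_R-b_r}$, and I must show this equals $L:=\liminf_{r\to\infty}a_r/b_r$. The bound $h_\infty\ge L$ is immediate from $\frac{a_r+a_R}{b_R-b_r}\ge\frac{a_R}{b_R}$ together with $\liminf_{r\to\infty}\inf_{R>r}\frac{a_R}{b_R}=L$. For $h_\infty\le L$, fix $\varepsilon\in(0,1)$; since $a_R/b_R<L+\varepsilon$ for infinitely many $R$ and $b_R\to\infty$, for each $r$ one can choose such an $R>r$ with $b_R$ so large that $b_r/b_R<\varepsilon$ and $a_r/(b_R-b_r)<\varepsilon$, whence $\frac{a_r+a_R}{b_R-b_r}<\varepsilon+\frac{L+\varepsilon}{1-\varepsilon}$; thus the inner infimum is $\le\varepsilon+\frac{L+\varepsilon}{1-\varepsilon}$ for every $r$, and $\varepsilon\downarrow0$ gives $h_\infty\le L$. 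I expect this last step to be the main obstacle: one must push the inner radius $r$ to infinity while still selecting an outer radius $R$ along which $a_R/b_R$ is near its $\liminf$ and for which the extra boundary term $a_r$ and the correction $b_r$ are negligible, and this balancing is exactly where $b_r\uparrow\infty$ is used — a simple geometric-weight example on the ray shows the formula fails when the $\nu$-measure is finite, so the hypothesis cannot be dropped.
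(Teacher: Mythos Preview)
Your proof is correct and follows the same architecture as the paper: reduce to the quotient $L=G/\Gamma$ via Theorem~\ref{quotient-graph-Chee}, show that on the half-line the Cheeger infimum is attained on balls (resp.\ annuli) — this is the paper's Proposition~\ref{p:line graph} — then identify the double limit with the $\liminf$ (Lemma~\ref{radial-symmetry}) and finally use $|\underline B_r|\to\infty$ to collapse annuli to balls (Lemma~\ref{l:calculus lemma1}). The only cosmetic differences are that the paper replaces an arbitrary $U$ directly by $\underline B_{\max U}$ in one step rather than first decomposing into maximal intervals and applying a mediant, and for the infinite-volume case it fixes $k$ and passes to $\liminf_{r\to\infty}\frac{|\partial\underline B_k|+|\partial\underline B_r|}{|\underline B_r|-|\underline B_k|}$ instead of your explicit $\varepsilon$-bookkeeping; both are equivalent.
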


The paper is organized as follows. The basic set up and concepts introduced in \S 2.
In \S 3, we prove the sign characterization of first eigenfunctions, Theorem \ref{First-eigen}.
In \S4, we prove Theorem \ref{maxi-funct}, the sign characterization of maximum eigenfuntions for bipartite subgraphs.
In \S 5, using the analytic approach, we identify the Cheeger constant of a symmetric graph with that of the quotient graph, Theorem \ref{quotient-graph-Chee}.
In \S 6, we introduce a ``one dimensional" model graph as the quotient graph of a spherically symmetric graphs, and prove Theorem \ref{t:spherically symmetric}.
In Appendix, we calculate various Cheeger constants of spherically symmetric graphs, for example, Fujiwara's spherically symmetric trees in Appendix \ref{exam1} and Wojciechowski's anti-trees in Appendix \ref{exam2}.

\section{Preliminary}


For a weighted graph $G=(V,E,\nu,\mu)$ and a finite subset $\Omega\subset V,$ we define the $p$-Laplacian, $p\in(1,\infty)$, with Dirichlet boundary condition on $\Omega.$ We denote by $\R^S$ the set of functions on $S\subset V.$ For any function $u\in \R^\Omega,$ the null-extension of $u$ defined as
\begin{align}\begin{split}\label{extension}
\bar{u}(x)
=\left\{\begin{array}{ll}
u(x),& x\in \Omega,\\
0,& x\in V\setminus\Omega.
\end{array}\right.
\end{split}\end{align}
Throughout the paper, we denote the null-extension of any function by $\bar{(\cdot)}$ in the paper.
The $p$-Laplacian with Dirichlet boundary condition, Dirichlet $p$-Laplaican in short, on $\Omega$ is defined as
\begin{align}\begin{split}\label{Dir-p-Lap}
\Delta_p u (x):= \frac{1}{\nu_x}\sum_{y\in V}\mu_{xy}|\bar{u}(y)-\bar{u}(x)|^{p-2}\left(\bar{u}(y)-\bar{u}(x)\right), \forall x\in\Omega.
\end{split}\end{align}


For any $p\in (1,\infty),$ we denote the $\ell^p$ space w.r.t. the measure $\nu$ by
\[
\ell^p_{\nu}(\Omega):=\Big\{u:\Omega\to \R: \sum_{x\in \Omega} |u(x)|^{p}\nu_x<\infty\Big\},
 \]
and denote the $\ell^p$ $\nu$-norm of a function $u$ by
\[
\|u\|_{p,\Omega}:=\Big(\sum_{x\in \Omega} |u(x)|^{p}\nu_x\Big)^{1/p}.
\]
The difference operator $\nabla$ is defined by $\nabla_{xy}u=u(y)-u(x)$ for any $x\sim y$. Then for any $u\in\R^V$, $|\nabla u|$ is a function on $E$ given by $|\nabla u|(\{x,y\})=|\nabla_{xy}u|$, $x\sim y$.
We denote the $\ell^p$ $\mu$-norm of the function $h\in \ell^p_{\mu}(E)$ by
\[
\|h\|_{p,E}:=(\sum\limits_{e\in E} |h(e)|^{p}\mu_e)^{1/p}.
\]
Then the $p$-Dirichlet functional $E_p$  defined as in (\ref{energy-func}) satisfies $E_p(u)=\|\nabla\bar{u}\|^p_{p,E}$.
Since the eigen-pair $(u,\lambda)$ of Dirichlet $p$-Laplacian satisfies eigenequation (\ref{eigen-equation}), by Green's formula, ref. \cite{Grigor09},
\begin{align}\begin{split}\label{Rayleigh quotient}
\lambda=\frac{\|\nabla\bar{u}\|^p_{p,E}}{\|u\|^p_{p,\Omega}}.
\end{split}\end{align}


For convenience,
we omit the subscript $\Omega$ if it is clear in the context, e.g. $\lambda_{1,p}:=\lambda_{1,p}(\Omega), \Delta_p u:=\Delta_{p,\Omega}u$ and so on.


\section{First eigenfunctions and eigenvalues to Dirichlet p-Laplacians}
In this section, we give an equivalent characterization for
first eigenfucntions of Dirichlet $p$-Laplacian.
Firstly, we prove the following lemma.
\begin{lemma}\label{nonzero}
Let $G=(V,E,\nu,\mu)$ be a weighted graph and $\Omega\subset V$ be a finite connected subset.
Assume $u\in \mathbb R^\Omega$ is an eigenfunction of Dirichlet $p$-Laplacian on $\Omega$.
If $u\geq 0$ ($u\leq 0$, resp.) on $\Omega$, then $u>0$ ($u<0$ resp.) on $\Omega$.
\end{lemma}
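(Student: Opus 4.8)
The plan is to argue by a strong maximum principle at an interior minimum. Suppose $u\geq 0$ on $\Omega$ (the case $u\leq 0$ is symmetric, replacing $u$ by $-u$, which is again an eigenfunction since $\Delta_p$ is odd). Assume for contradiction that $u$ is not strictly positive, so the set $Z:=\{x\in\Omega: u(x)=0\}$ is nonempty. Since $\Omega$ is connected and $u\not\equiv 0$, there is a vertex $x_0\in Z$ having a neighbour $y_0\in\Omega$ with $u(y_0)>0$; indeed, otherwise $Z$ and $\Omega\setminus Z$ would be separated in the induced subgraph on $\Omega$, contradicting connectedness (note $\Omega\setminus Z\neq\emptyset$). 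The idea is that the eigenequation at such an $x_0$ forces a contradiction.

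Concretely, I would evaluate the eigenequation \eqref{eigen-equation} at $x=x_0$. On the one hand, since $u(x_0)=0$ we have $|f|^{p-2}f(x_0)=0$, so the right-hand side $-\lambda|u|^{p-2}u(x_0)$ vanishes. On the other hand, by definition \eqref{Dir-p-Lap},
\begin{align*}
\nu_{x_0}\,\Delta_p u(x_0)=\sum_{y\in V}\mu_{x_0 y}\,|\bar u(y)-\bar u(x_0)|^{p-2}\big(\bar u(y)-\bar u(x_0)\big)=\sum_{y\in V}\mu_{x_0 y}\,|\bar u(y)|^{p-2}\bar u(y),
\end{align*}
using $\bar u(x_0)=u(x_0)=0$. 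Every term in this last sum is nonnegative: for $y\in\Omega$ we have $\bar u(y)=u(y)\geq 0$, and for $y\notin\Omega$ we have $\bar u(y)=0$. Hence the sum is zero only if every term is zero, i.e. $\mu_{x_0 y}\,|\bar u(y)|^{p-2}\bar u(y)=0$ for all $y$. But the term with $y=y_0$ equals $\mu_{x_0 y_0}\,u(y_0)^{p-1}>0$ since $x_0\sim y_0$ (so $\mu_{x_0 y_0}>0$) and $u(y_0)>0$. This contradiction shows $Z=\emptyset$, i.e. $u>0$ on $\Omega$.

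I would also note the only subtle point in the exponents: when $1<p<2$ the factor $|t|^{p-2}$ blows up as $t\to 0^+$, but the product $|t|^{p-2}t=|t|^{p-1}\,\mathrm{sgn}(t)$ is still continuous and vanishes at $t=0$, and is strictly positive for $t>0$; so the sign/positivity bookkeeping above is valid for every $p\in(1,\infty)$. No estimates or limiting arguments are needed — the whole argument is a finite sum of sign-definite terms. The only place requiring care is the graph-theoretic extraction of the boundary vertex $x_0\in Z$ adjacent to a vertex where $u>0$, which is exactly where connectedness of $\Omega$ enters; I expect this to be the ``main obstacle'' in the sense that it is the one step that genuinely uses a hypothesis, though it is elementary.
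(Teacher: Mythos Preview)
Your proof is correct and takes essentially the same approach as the paper: evaluate the eigenequation at a zero of $u$, observe that the right-hand side vanishes while the left-hand side is a sum of nonnegative terms, and use connectedness to reach a contradiction. The only cosmetic difference is that the paper picks an arbitrary zero $x_0$, deduces that all neighbours of $x_0$ in $\Omega$ are zeros, and then propagates by connectedness to get $u\equiv 0$, whereas you invoke connectedness first to select $x_0\in Z$ with a neighbour outside $Z$ and obtain an immediate contradiction; these are the same argument in different order.
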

\begin{proof}
We show this by contradiction. Suppose $u\geq 0$ and there exists $x_0\in \Omega$ such that $u(x_0)=0$.
By (\ref{eigen-equation}), we have
\begin{align}\begin{split}\label{zero-point-Lap-equ}
0=-\lambda u^{p-1}(x_0)
=\Delta_p u(x_0)
=\frac{1}{\nu_{x_0}}
\sum\limits_{y\in V}|\bar{u}(y)|^{p-2}\bar{u}(y)\mu_{x_0y}.
\end{split}\end{align}
Hence, $\bar{u}(y)=0$ for any $y\sim x_0.$
By the connectedness of $\Omega$, $u\equiv 0$ on $\Omega.$
This contradicts to $u\not\equiv 0,$ since $u$ is an eigenfunction. Hence, $u>0$.

Replacing $u$ by $-u$ and using the same argument, we can show that $u\leq 0$ gives $u<0$.
\end{proof}
For any $\Omega\subset V$, we denote the vertex boundary of $\Omega$ by \[\delta\Omega:=\{y\in V\setminus \Omega: \exists x\in \Omega\ \mathrm{such\ that}\ y\sim x\}.\]

\begin{lemma}[\cite{Kim-Chung10, Park-Chung11}, Theorem A in \cite{Park11}]\label{l:comparison principle}
Let $u,v$ be functions on $\Omega\cup \delta\Omega$. Assume $u,v$ satisfy the following equation
\begin{align*}\begin{cases}
\Delta_p u(x)\geq \Delta_pv(x), \ \ &x\in\Omega,\\
u(x)=v(x)=0,                        &x\in\delta\Omega.
\end{cases}\end{align*}
Then $u \leq v $ on $\Omega$.
\end{lemma}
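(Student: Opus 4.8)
The assertion is a comparison (weak maximum) principle for the Dirichlet $p$-Laplacian, and I would prove it directly by a pointwise argument at an extremal vertex. The only analytic input is that $\Phi(t):=|t|^{p-2}t$ is a \emph{strictly} increasing odd function on $\R$ for $p\in(1,\infty)$, together with the formula $\Delta_p u(x)=\nu_x^{-1}\sum_{y}\mu_{xy}\,\Phi(\nabla_{xy}\bar u)$ (and likewise for $v$), where $\bar u$ is the null extension of $u|_\Omega$, which agrees with $u$ on $\Omega\cup\delta\Omega$ since $u=0$ there. Set $w:=\bar u-\bar v\in\R^V$; then $w\equiv 0$ on $V\setminus\Omega$ (in particular on $\delta\Omega$), and it suffices to show $\max_\Omega w\le 0$, equivalently $w^{+}\equiv 0$ on $\Omega$. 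We may assume $G$ is connected (restrict to a component) and $\Omega\subsetneq V$, so that every connected component of $\Omega$ has a neighbour in $\delta\Omega$; this is automatic in the Dirichlet setting, and the statement genuinely fails without it.

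Suppose, for contradiction, that $M:=\max_\Omega w>0$, attained at some $x^{\ast}\in\Omega$; since $w\equiv 0$ off $\Omega$, $x^{\ast}$ maximizes $w$ over all of $V$. From $\Delta_p u(x^{\ast})\ge\Delta_p v(x^{\ast})$ and $\nu_{x^{\ast}}>0$ we obtain
\[
\sum_{y\sim x^{\ast}}\mu_{x^{\ast} y}\Bigl(\Phi(\nabla_{x^{\ast} y}\bar u)-\Phi(\nabla_{x^{\ast} y}\bar v)\Bigr)\ge 0 .
\]
For every neighbour $y$ of $x^{\ast}$ one has $\nabla_{x^{\ast} y}\bar u-\nabla_{x^{\ast} y}\bar v=w(y)-w(x^{\ast})\le 0$, hence $\Phi(\nabla_{x^{\ast} y}\bar u)\le\Phi(\nabla_{x^{\ast} y}\bar v)$ by monotonicity; so the sum above is a sum of non-positive terms with non-negative total, forcing each term to vanish. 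Since $\mu_{x^{\ast} y}>0$ and $\Phi$ is strictly increasing, $\nabla_{x^{\ast} y}\bar u=\nabla_{x^{\ast} y}\bar v$, i.e. $w(y)=M$, for every $y\sim x^{\ast}$.

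Consequently $K:=\{x\in V:\ w(x)=M\}$ is non-empty, is contained in $\Omega$ (as $w\equiv 0$ off $\Omega$ and $M>0$), and — applying the previous paragraph at each of its vertices, each again a global maximizer of $w$ — is closed under passing to neighbours. Connectedness of $G$ then forces $K=V$, hence $\Omega=V$, a contradiction; equivalently, $K$ would contain a vertex of $\delta\Omega$, where $w=0\ne M$. Therefore $M\le 0$, i.e. $u\le v$ on $\Omega$.

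The same conclusion follows by an energy route: test the inequality $\Delta_p u-\Delta_p v\ge 0$ against the non-negative null-extended function $(u-v)^{+}$, apply Green's formula to rewrite the result as an edge sum $\tfrac12\sum_{x\sim y}\mu_{xy}\bigl(\Phi(\nabla_{xy}\bar u)-\Phi(\nabla_{xy}\bar v)\bigr)\nabla_{xy}w^{+}\le 0$, and check edge-by-edge — splitting into the four sign patterns of $(w(x),w(y))$ — that every summand is $\ge 0$; forcing each to vanish and invoking strict monotonicity yields $w^{+}\equiv 0$ after the same propagation. In either approach the substance of the argument is the observation that at a would-be positive maximum the incident edge-differences of $u$ and of $v$ must coincide; the only delicate point — and the one I would expect to require the most care — is the propagation step, which is precisely where connectedness and the boundary condition $u=v=0$ on $\delta\Omega$ are essential (the conclusion fails if some component of $\Omega$ has empty boundary, e.g. $u\equiv 1$, $v\equiv 0$ on a boundaryless finite component).
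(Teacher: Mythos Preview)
The paper does not give its own proof of this lemma: it is quoted as a known result from \cite{Kim-Chung10, Park-Chung11, Park11} and used as a black box in the proof of Theorem~\ref{first-eigen}. So there is nothing in the paper to compare your argument against.

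That said, your pointwise maximum-principle proof is correct. The key steps --- strict monotonicity of $\Phi(t)=|t|^{p-2}t$, the termwise sign analysis at a global maximizer $x^\ast$ of $w=\bar u-\bar v$, and the propagation of the maximum to all neighbours --- are all valid, and together they force the level set $K=\{w=M\}$ to be open (neighbour-closed) in $G$ while lying inside $\Omega$, which is impossible once some component of $\Omega$ actually has boundary. Your explicit caveat that the statement fails on boundaryless components is well taken and is exactly the implicit standing assumption in the Dirichlet setting of the paper (there $\Omega$ is a finite proper subset with $\delta\Omega\neq\emptyset$, and in the application it is connected). The alternative energy argument you sketch is also the standard route in the cited references; either proof would be acceptable here, and your first one is slightly cleaner for this discrete setting.
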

The comparison principle enables us to characterize the first eigenfunctions by their sign conditions.
\begin{thm}\label{first-eigen}
Let $\Omega$ be a finite connected subgraph of weighted graph $G=(V,E,\nu,\mu)$ and $u\in \mathbb R^\Omega$ be an eigenfunction of Dirichlet $p$-Laplacian on $\Omega$.
Then $u$ is the first eigenfunction if and only if either $u>0$ on $\Omega$ or $u<0$ on $\Omega$.
\end{thm}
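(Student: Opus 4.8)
The plan is to prove both directions using the comparison principle (Lemma~\ref{l:comparison principle}) and the positivity improvement (Lemma~\ref{nonzero}). Recall that by the Rayleigh quotient characterization \eqref{eq:Rayleigh}, the first eigenvalue satisfies $\lambda_{1,p}=\inf_{u\neq 0}E_p(u)/\|u\|_{p,\Omega}^p$, and since $E_p(|u|)\le E_p(u)$ (because $\big||v|-|w|\big|\le|v-w|$ termwise), the infimum is attained at a nonnegative function; by Lemma~\ref{nonzero} any such nonnegative minimizer is in fact strictly positive on $\Omega$. So a positive first eigenfunction exists. The content of the theorem is the converse (any fixed-sign eigenfunction is a first eigenfunction) together with uniqueness, and the natural tool is the comparison principle, which forces any two positive eigenfunctions to be proportional.

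For the uniqueness/converse, first I would establish the following key step: if $u>0$ and $v>0$ are both eigenfunctions on $\Omega$, with eigenvalues $\lambda$ and $\mu$ respectively, then $\lambda=\mu$ and $u=cv$ for some $c>0$. To see this, extend $u,v$ by zero to $\Omega\cup\delta\Omega$ (so they vanish on $\delta\Omega$). After rescaling $v$ we may assume $u\le v$ on $\Omega$ with equality at some point $x_1\in\Omega$ (take $c=\max_{\Omega}u/v$ and replace $v$ by $cv$; note $v$ still solves the eigenequation with eigenvalue $\mu$ by homogeneity of $\Delta_p$). Now observe $\Delta_p u = -\lambda u^{p-1} = -\lambda u^{p-1}$ while $\Delta_p v = -\mu v^{p-1}\ge -\mu u^{p-1}$ at $x_1$ since $u(x_1)=v(x_1)$; more carefully, one compares the two eigenvalues by plugging the test functions into the Rayleigh quotient — since $v$ (rescaled) touches $u$ from above, a Picone-type inequality or direct manipulation shows $\lambda\le\mu$; by symmetry (rescale the other way) $\lambda=\mu$. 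Then $u$ and $v$ solve the \emph{same} equation $\Delta_p w=-\lambda w^{p-1}$, so $\Delta_p u=\Delta_p v$ and Lemma~\ref{l:comparison principle} (applied in both directions after the touching rescaling, or via the strong maximum principle contained in Lemma~\ref{nonzero}) yields $u\equiv v$. Hence the positive eigenfunction is unique up to scaling, its eigenvalue is forced to be $\lambda_{1,p}$ (since we already exhibited a positive first eigenfunction and uniqueness identifies the two), and likewise for negative eigenfunctions via $u\mapsto -u$.

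Finally, for the forward direction — a first eigenfunction must be of fixed sign — suppose $f$ is a first eigenfunction. Then $|f|$ also satisfies $E_p(|f|)/\||f|\|_{p,\Omega}^p\le E_p(f)/\|f\|_{p,\Omega}^p=\lambda_{1,p}$, so $|f|$ is a nonnegative first eigenfunction, hence by Lemma~\ref{nonzero} we get $|f|>0$ on $\Omega$, i.e. $f$ never vanishes on $\Omega$; I then need that a nowhere-zero eigenfunction whose absolute value is an eigenfunction cannot change sign. The cleanest route: $|f|$ is a positive eigenfunction with eigenvalue $\lambda_{1,p}$, so by the uniqueness just proved $f=\pm|f|$ on each connected component, and $\Omega$ is connected, so $f>0$ or $f<0$ throughout. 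Combined with the converse direction, this gives the stated equivalence.

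The main obstacle I anticipate is the step showing $\lambda=\mu$ and proportionality of two positive eigenfunctions: the comparison principle as stated (Lemma~\ref{l:comparison principle}) requires $\Delta_p u\ge\Delta_p v$, but two eigenfunctions with \emph{a priori} different eigenvalues do not obviously satisfy such an inequality. The resolution is the rescaling-and-touching trick together with a Picone-type inequality for the discrete $p$-Laplacian (or, equivalently, the convexity of $E_p$ along the curve $t\mapsto((1-t)u^p+tv^p)^{1/p}$, which is the discrete analogue of the Kawohl–Lindqvist argument cited in the introduction). Making the touching argument rigorous — ensuring that equality at one point propagates to all of the connected $\Omega$ via the structure of \eqref{eigen-equation} — is where the care is needed, but it is exactly the kind of strong-maximum-principle reasoning already used in the proof of Lemma~\ref{nonzero}.
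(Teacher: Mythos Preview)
Your forward direction has a genuine gap. You establish that $|f|>0$ is a positive first eigenfunction and then write ``by the uniqueness just proved $f=\pm|f|$ on each connected component.'' But the uniqueness you (plan to) prove says only that any two \emph{positive} eigenfunctions are proportional; it says nothing about $f$, which is not yet known to have fixed sign. For the nonlinear $p$-Laplacian there is no linear eigenspace to fall back on, so you cannot conclude $f=c|f|$ this way, and the argument becomes circular. The paper's route is actually simpler and bypasses uniqueness entirely: since $E_p(f)=\lambda_{1,p}\|f\|_{p,\Omega}^p=\lambda_{1,p}\||f|\|_{p,\Omega}^p\le E_p(|f|)\le E_p(f)$, equality holds termwise, i.e.\ $|\bar f(x)-\bar f(y)|=\big||\bar f|(x)-|\bar f|(y)\big|$ for every edge, which forces $\bar f(x)\bar f(y)\ge 0$ for all $x\sim y$. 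Combined with $|f|>0$ and the connectedness of $\Omega$, this gives the fixed sign directly.

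For the converse, your touching-plus-Picone plan can in principle be made rigorous (discrete Picone identities do exist), but you correctly flag it as the main obstacle, and your sketch leaves both the inequality $\lambda\le\mu$ and the propagation-of-equality step unproven. The paper's argument is shorter and uses only the comparison principle you already cite: take $u>0$ the positive first eigenfunction and suppose $u_0>0$ is an eigenfunction with eigenvalue $\lambda>\lambda_{1,p}$. Scale $u$ so that $u\le u_0$ on $\Omega$. With $\kappa=(\lambda_{1,p}/\lambda)^{1/(p-1)}<1$ one has
\[
\Delta_p u=-\lambda_{1,p}u^{p-1}\ge -\lambda_{1,p}u_0^{p-1}=-\lambda(\kappa u_0)^{p-1}=\Delta_p(\kappa u_0),
\]
so Lemma~\ref{l:comparison principle} gives $u\le\kappa u_0$. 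Iterating yields $u\le\kappa^n u_0\to 0$, a contradiction. No Picone, no touching, no strong maximum principle. Note also that the theorem as stated does not assert uniqueness; that is proved separately (via the $\ell^p$-convexity argument you allude to at the end).
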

\begin{proof}
We first show that the first eigenfunction $u$ satisfies either $u>0$ on $\Omega$ or $u<0$ on $\Omega$.
Let $u$ be a first eigenfunction pertaining to $\lambda_{1,p}$.
By scaling, w.l.o.g., we assume that $\|u\|_{p,\Omega}=1.$
By the Rayleigh quotient characterization (\ref{Rayleigh quotient}), we have
\begin{align}\begin{split}\label{bdd-by-first-eigens}
\lambda_{1,p}=\|\nabla \bar{u} \|^p_{p,E}\geq \|\nabla |\bar{u} |\|^p_{p,E}
\geq \lambda_{1,p}.
\end{split}\end{align}
Hence, above inequalities are equalities.
This implies that
\begin{align*}
0&=\|\nabla u\|^p_{p,E}-\|\nabla |u|\|^p_{p,E}\\
 &=\sum_{y\in V}\left(|\bar{u}(y)-\bar{u}(x)|^p-\left||\bar{u}|(y)-|\bar{u}|(x)\right|^p\right)\mu_{xy}
\geq 0,
\end{align*}
which implies that $\bar{u}(x)\bar{u}(y)\geq 0$ for $y\sim x$.
By (\ref{bdd-by-first-eigens}), $|\bar{u}|$ is a first eigenfunction of $\Omega$. By Lemma \ref{nonzero}, $|u(x)|>0, \forall x\in \Omega$. Hence, by the connectedness of $\Omega$, either $u> 0$ on $\Omega$ or $u<0$ on $\Omega$.

For another direction,
we choose $u\in \mathbb R^\Omega$ as the positive first eigenfunction pertaining to the first eigenvalue $\lambda_{1,p}$, replacing $u$ by $-u$ if $u<0$.
Using a contradiction argument, we assume that $u_0\in\mathbb R^\Omega$ is a positive eigenfunction of Dirichlet $p$-Laplacian pertaining to $\lambda$ with $\lambda>\lambda_{1,p}$.
Since $\Omega$ is finite, by scaling $u$ we may assume that $u(x)\leq u_0(x)$ for any $x\in \Omega.$
We claim that $u\leq \kappa u_0$ with
$\kappa=\Big(\frac{\lambda_{1,p}}{\lambda}\Big)^{\frac{1}{p-1}}<1.$ Note that on $\Omega$
\begin{eqnarray*}
  \Delta_p u&=&-\lambda_{1,p} u^{p-1}\geq -\lambda_{1,p} u_0^{p-1}\\
  &=&-\lambda(\kappa u_0)^{p-1}=\Delta_p(\kappa u_0),
\end{eqnarray*} and $\bar{u}|_{\delta\Omega}=0=\kappa \bar{u}_0|_{\delta \Omega}.$
The comparison principle for the $p$-Laplacian, Lemma \ref{l:comparison principle},
yields that $u\leq \kappa u_0$ on $\Omega.$ This proves the claim.

By the same argument, replacing $u_0$ by $\kappa u_0,\kappa^2 u_0,\cdots,$
one can show that $u\leq \kappa^n u_0$ on $\Omega$ for any $n\in \N.$
Taking the limit $n\to \infty,$
we get $u \equiv 0$ on $\Omega.$ This yields a contradiction. Hence, we obtain $\lambda=\lambda_{1,p}$ and $u_0$ is the first eigenfunction.
\end{proof}
\begin{lemma}\label{one-multi}
With the same assumption as in Theorem \ref{first-eigen},
 the first eigenfunction is unique (up to multiplication with constants).
\end{lemma}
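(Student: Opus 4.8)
The plan is to run a strong-maximum-principle argument built directly from the eigenequation, since the weak comparison principle (Lemma~\ref{l:comparison principle}) by itself is not enough: applied to two first eigenfunctions it only re-derives the inequality one already has. Suppose $u$ and $v$ are both first eigenfunctions on $\Omega$; both pertain to the same number $\lambda_{1,p}$. By Theorem~\ref{first-eigen} each has a constant sign on $\Omega$, and since we only claim uniqueness up to a scalar factor we may replace $u$ by $-u$ and $v$ by $-v$ if needed and assume $u>0$ and $v>0$ on $\Omega$. As $\Omega$ is finite, $c:=\max_{x\in\Omega}\frac{v(x)}{u(x)}$ is a well-defined positive number, attained at some $x_0\in\Omega$; thus $v\le cu$ on $\Omega$, and since $\bar v$ and $c\bar u$ both vanish off $\Omega$ we get $\bar v\le c\bar u$ on all of $V$. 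By homogeneity of $\Delta_p$, $cu$ is again an eigenfunction pertaining to $\lambda_{1,p}$. The goal is to promote the equality $v(x_0)=cu(x_0)$ at the single point $x_0$ to $v\equiv cu$ on $\Omega$.

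For the key step, evaluate the eigenequation for $v$ and for $cu$ at $x_0$. Since $v(x_0)=cu(x_0)$, the right-hand sides $-\lambda_{1,p}v(x_0)^{p-1}$ and $-\lambda_{1,p}(cu(x_0))^{p-1}$ agree, so $\Delta_p v(x_0)=\Delta_p(cu)(x_0)$, which, after clearing $\nu_{x_0}$, reads
\begin{equation*}
\sum_{y\sim x_0}\mu_{x_0y}\big(\phi(\bar v(y))-\phi(c\bar u(y))\big)=0,\qquad \phi(s):=|s-v(x_0)|^{p-2}(s-v(x_0)).
\end{equation*}
For $p>1$ the map $\phi$ is strictly increasing on $\R$, and $\bar v(y)\le c\bar u(y)$ forces every summand to be $\le 0$; since $\mu_{x_0y}>0$ for each $y\sim x_0$, every summand vanishes, and strict monotonicity of $\phi$ gives $\bar v(y)=c\bar u(y)$ for all $y\sim x_0$. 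In particular $v(y)=cu(y)$, hence $\frac{v(y)}{u(y)}=c$, for every $y\in\Omega$ adjacent to $x_0$.

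This shows that $A:=\{x\in\Omega:\ v(x)=cu(x)\}$ is nonempty ($x_0\in A$) and that, whenever $x\in A$, every $\Omega$-neighbour of $x$ again lies in $A$ — indeed every point of $A$ attains the maximum $c$, so the displayed computation may be repeated verbatim there. Connectedness of $\Omega$ then gives $A=\Omega$, i.e.\ $v=cu$ on $\Omega$, which is the assertion.

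The step I expect to be the crux — and the reason the weak comparison principle alone fails — is precisely this strong-maximum-principle extraction: reading off pointwise equality from the vanishing of a sum of nonpositive terms, together with the strict monotonicity of $s\mapsto|s-v(x_0)|^{p-2}(s-v(x_0))$, and then propagating along $\Omega$. An alternative route is the hidden-convexity argument of \cite{Kawohl-Lindqvist06}: the reverse triangle inequality in $\ell^p$ shows that $(s,t)\mapsto|s^{1/p}-t^{1/p}|^p$ is convex on $\R_{\ge0}^2$ and affine along a segment only when the endpoints are proportional, so $E_p$ is convex along $t\mapsto\big((1-t)u^p+tv^p\big)^{1/p}$ for normalized positive first eigenfunctions and the equality case yields $v=u$; in this discrete setting, however, the maximum-principle argument above is the shorter one.
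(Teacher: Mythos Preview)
Your argument is correct and complete. It is, however, genuinely different from the paper's proof. The paper takes exactly the ``hidden convexity'' route you mention only as an alternative at the end: given two positive normalized first eigenfunctions $u_1,u_2$, it forms $u=(u_1^p+u_2^p)^{1/p}$, observes that $\|u\|_p^p=2$, and uses the reverse triangle inequality in $(\R^2,|\cdot|_p)$ with $U=(u_1(x),u_2(x))$, $V=(u_1(y),u_2(y))$ to get $|\nabla_{xy}\bar u|^p=\big||U|_p-|V|_p\big|^p\le|U-V|_p^p=|\nabla_{xy}\bar u_1|^p+|\nabla_{xy}\bar u_2|^p$. Summing gives $2\lambda_{1,p}\le\|\nabla\bar u\|_{p,E}^p\le 2\lambda_{1,p}$, so equality holds edge by edge, and the strict convexity of the $\ell^p$ norm forces $U\parallel V$, i.e.\ $u_1(x)/u_2(x)=u_1(y)/u_2(y)$ for every edge in $\Omega$; connectedness finishes. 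Your strong-maximum-principle extraction is more elementary in the discrete setting---it needs nothing beyond the strict monotonicity of $s\mapsto|s|^{p-2}s$ and a touching argument---while the paper's approach parallels the continuous literature (Lindqvist, Kawohl--Lindqvist) and is the same mechanism later reused in the paper's uniqueness proof for the \emph{maximum} eigenfunction on bipartite subgraphs (Lemma~\ref{maximum-unique}), where a maximum-principle argument is not available.
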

\begin{proof}
Let $u_1,u_2$ be two first eigenfunctions of Dirichlet $p$-Laplacian.
It suffices to prove there exists a constant $c\neq 0$ such that $u_1=cu_2$.

By Theorem \ref{first-eigen}, either $u_i>0$ on $\Omega$ or $u_i<0$ on $\Omega$ for $i=1,2$.
We may assume $u_i>0$ on $\Omega$ and $\|u_i\|_{p,\Omega}=1$ for $i=1,2$.
Choose a new function $u=(u_1^p+u_2^p)^{1/p}.$ Then $\|u\|_p^p=2.$
Let $\bar{u}$, $\bar{u}_1$, $\bar{u}_2$ be the null-extension of $u$, $u_1$, $u_2$, respectively. We claim that
\begin{align}\begin{split}\label{p-norm-ineq}
|\nabla_{xy} \bar{u}|^p
\leq |\nabla_{xy} \bar{u}_1|^p+|\nabla_{xy} \bar{u}_2|^p, \ \ \forall x,y\in V,\ x\sim y.
\end{split}\end{align}
If $x\in \delta\Omega$ or $y\in \delta\Omega,$
then the equality holds trivially.
It's sufficient to prove the claim for the case $x,y\in \Omega.$
This follows from the convexity of $\ell^p$ norm,
denoted by $|\cdot|_p$, in $\R^2$.
Indeed, setting vectors $U=(u_1(x),u_2(x))$ and $V=(u_1(y),u_2(y)),$ we have
\begin{align*} 
|\nabla_{xy} u|^p
&=||U|_p-|V|_p|^p\leq |U-V|_p^p\\
&=|\nabla_{xy} u_1|^p+|\nabla_{xy} u_2|^p.
\end{align*}
By the strict convexity of $\ell^p$ norm for $1<p<\infty,$ the equality holds if and only if $U=cV$ for some $c>0.$
This proves the claim. 
By (\ref{p-norm-ineq}) and (\ref{Rayleigh quotient}), 
\begin{eqnarray*}
2\lambda_{1,p}=\lambda_{1,p}\|u\|_{p,\Omega}^p\leq\|\nabla \bar{u} \|_{p,E}^p\leq \|\nabla \bar{u}_1\|_{p,E}^p+\|\nabla \bar{u}_2\|_{p,E}^p
=2\lambda_{1,p}.
\end{eqnarray*}
Hence the above inequalities are in fact be equalities,
which implies that $\frac{u_1(x)}{u_2(x)}=\frac{u_1(y)}{u_2(y)}$
for any $x\sim y$ with $x,y\in \Omega.$ This yields that $u_1=cu_2$
on $\Omega$ by the connectedness of $\Omega.$ The proof is completed.
\end{proof}

\begin{proof}[Proof of Theorem~\ref{First-eigen}] The theorem follows from Theorem \ref{first-eigen} and Lemma \ref{one-multi}.
\end{proof}

It is well known that the first eigenvalue of $1$-Laplacian is given by the Cheeger constant (c.f. \cite{Hein-Buhler10, Chang16, Chang-Shao-Zhang17}). For completeness we give a proof for the Dirichlet $1$-Laplacian here.

\begin{proof}[Proof of Proposition \ref{c:1-Lap}]
Let $\mathds{1}_K(x)$ be characteristic function on $K$ defined by \begin{align*}
\mathds{1}_K(x)=
\begin{cases}1, \ \ &x\in K\\
0, &x\notin K.
\end{cases}
\end{align*}
For any function $f\in \R^\Omega$, let
$u=|f|$ and $\Omega_t(u):=\{x\in \Omega| u(x)>t\}$.
Set $\I_{x,y}=\big[\min\{u(x),u(y)\},\max\{u(x),u(y)\}\big)$.
Then $\{x,y\}\in\partial\Omega_t(u)$ if and only if $t\in \I_{x,y}$.
Hence,
\begin{align}\begin{split}\label{level-set-vol}
|\partial\Omega_t(u)|_{\mu}= \sum\limits_{\{x,y\}\in \partial\Omega_t(u)} \mu_{xy}
=\sum\limits_{\{x,y\}\in E} \mathds{1}_{\I_{x,y}}(t)\mu_{xy}.
\end{split}\end{align}

Since $\Omega_t\subset \Omega$, $h_{\mu,\nu}(\Omega)\leq h_{\mu,\nu}(\Omega_t)$.
By (\ref{level-set-vol}),
\begin{align}\begin{split}\label{co-area-formu}
\frac{1}{2}\sum_{x,y\in V}|\bar{u}(x)-\bar{u}(y)|\mu_{xy}
&=\sum\limits_{\{x,y\}\in E}\int_0^\infty \mathds{1}_{\I_{x,y}}(t)\mu_{xy}dt \\
&= \int_0^\infty\sum\limits_{\{x,y\}\in E} \mathds{1}_{\I_{x,y}}(t)\mu_{xy}dt \\
&=\int_0^\infty |\partial\Omega_t(u)|_{\mu}dt \\
&\geq h_{\mu,\nu}(\Omega)\int_0^\infty|\Omega_t(u)|_\nu dt.
\end{split}\end{align}
We also have $x\in\Omega_t(u)$ if and only if $\mathds{1}_{(t,\infty)}(u(x))=1$. Then
\begin{align}\begin{split}\label{area-form}
\int_0^\infty|\Omega_t(u)|_\nu dt
&=\int_0^\infty \sum_{x\in\Omega_t}\nu_x dt\\
&=\int_0^\infty \sum_{x\in\Omega}\mathds{1}_{(t,\infty)}(u(x))\nu_xdt\\
&=\sum_{x\in\Omega}\nu_x\int_0^\infty\mathds{1}_{(t,\infty)}(u(x))dt
 =\sum_{x\in\Omega}u(x)\nu_x=\|f\|_{1,\Omega}.
\end{split}\end{align}
Combining (\ref{co-area-formu}) and (\ref{area-form}), together with $E_1(u)=E_1(|f|)\leq E_1(f)$, we obtain
$\frac{E_1(f)}{\|f\|_{1,\Omega}}\geq h_{\mu,\nu}(\Omega)$.
Applying the Rayleigh quotient characterization \eqref{eq:Rayleigh}, we obtain $\lambda_{1,1}(\Omega)\geq h_{\mu,\nu}(\Omega)$.

On the other hand, let $U\subset \Omega$ be a Cheeger cut such that $h_{\mu,v}(\Omega)=\frac{|\partial U|_{\mu}}{|U|_\nu}$. Considering the characteristic function $\mathds{1}_U$,
by \eqref{eq:Rayleigh}, we have
\[
\lambda_{1,1}(\Omega)\leq \frac{E_1(\mathds{1}_U)}{\|\mathds{1}_U\|_{1,\Omega}}
 = \frac{\sum_{\{x,y\}\in \partial U}\mu_{xy}}{|U|_\nu}
 = \frac{|\partial U|_\mu}{|U|_\nu}=h_{\mu,\nu}(\Omega).
\]
Hence, we obtain $\lambda_{1,1}(\Omega)=h_{\mu,\nu}(\Omega)$. The proof is completed.
\end{proof}

In the rest of the section, we prove the monotonicity property of the first eigenvalue of Dirichlet $p$-Laplacian as $p$ varies, analogous to the continuous case. By mimicking the argument in \cite[Theorem~3.2]{Lindqvist93}, we prove the following result.
\begin{satz}\label{monotonicity}
 Let $\Omega$ be a finite connected subset of a weighted graph $G=(V,E,\nu,\mu)$ with $\nu_x=\sum\limits_{y\in V}\mu_{xy}$. For $1<p<s<\infty,$ we have
  $$p\lambda_{1,p}(\Omega)^{\frac1p}\leq s\lambda_{1,s}(\Omega)^{\frac1s}.$$
\end{satz}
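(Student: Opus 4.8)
The plan is to mimic Lindqvist's continuous argument \cite[Theorem~3.2]{Lindqvist93} by using a power of a first $s$-eigenfunction as a test function for the $p$-problem. First I would fix a first eigenfunction $u$ of the Dirichlet $s$-Laplacian on $\Omega$; since $\Omega$ is connected, Theorem~\ref{First-eigen} (with exponent $s$) lets me assume $u>0$ on $\Omega$, and after scaling, $\|u\|_{s,\Omega}=1$, so that $\|\nabla\bar u\|_{s,E}^{s}=E_s(u)=\lambda_{1,s}(\Omega)$ by \eqref{Rayleigh quotient}. Put $\alpha:=s/p>1$ and take as a trial function for $\lambda_{1,p}(\Omega)$ the (strictly positive, hence admissible) function $v:=u^{\alpha}\in\R^{\Omega}$.

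The heart of the matter is a discrete substitute for the chain rule $|\nabla(u^{\alpha})|=\alpha u^{\alpha-1}|\nabla u|$, which has no exact analogue on a graph. I would use the elementary inequality
\[
|b^{\alpha}-a^{\alpha}|\le \alpha\,\big(\max\{a,b\}\big)^{\alpha-1}\,|b-a|\qquad(a,b\ge 0,\ \alpha\ge 1),
\]
obtained from $b^{\alpha}-a^{\alpha}=\int_a^b \alpha t^{\alpha-1}\,dt$ and the monotonicity of $t\mapsto t^{\alpha-1}$. Applying it with $a=\bar u(x)$, $b=\bar u(y)$ on every edge $\{x,y\}\in E$ — crucially also on edges joining $\Omega$ to its vertex boundary, where one of the two values is $0$ — and summing against $\mu$ gives, using $(\alpha-1)p=s-p$,
\[
\|\nabla\bar v\|_{p,E}^{p}\le \Big(\tfrac{s}{p}\Big)^{p}\sum_{\{x,y\}\in E}\big(\max\{\bar u(x),\bar u(y)\}\big)^{\,s-p}\,|\nabla_{xy}\bar u|^{p}\,\mu_{xy}.
\]

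Next I would apply Hölder's inequality on $(E,\mu)$ with conjugate exponents $\tfrac{s}{s-p}$ and $\tfrac{s}{p}$ to the sum on the right, bounding it by
\[
\Big(\sum_{\{x,y\}\in E}\big(\max\{\bar u(x),\bar u(y)\}\big)^{s}\mu_{xy}\Big)^{\frac{s-p}{s}}\Big(\sum_{\{x,y\}\in E}|\nabla_{xy}\bar u|^{s}\mu_{xy}\Big)^{\frac{p}{s}},
\]
whose second factor is $E_s(u)^{p/s}=\lambda_{1,s}(\Omega)^{p/s}$. For the first factor I would use $\max\{a,b\}^{s}\le a^{s}+b^{s}$ together with the \emph{normalized} hypothesis $\nu_x=\sum_{y}\mu_{xy}$:
\[
\sum_{\{x,y\}\in E}\big(\max\{\bar u(x),\bar u(y)\}\big)^{s}\mu_{xy}\le\sum_{\{x,y\}\in E}\big(\bar u(x)^{s}+\bar u(y)^{s}\big)\mu_{xy}=\sum_{x\in V}\bar u(x)^{s}\nu_x=\|u\|_{s,\Omega}^{s}=1 .
\]
Since also $\|v\|_{p,\Omega}^{p}=\sum_{x\in\Omega}u(x)^{s}\nu_x=1$, the Rayleigh quotient characterization \eqref{eq:Rayleigh} yields $\lambda_{1,p}(\Omega)\le\|\nabla\bar v\|_{p,E}^{p}\le (s/p)^{p}\lambda_{1,s}(\Omega)^{p/s}$, and taking $p$-th roots gives $p\,\lambda_{1,p}(\Omega)^{1/p}\le s\,\lambda_{1,s}(\Omega)^{1/s}$.

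The one subtle point, and the only place the argument genuinely departs from the continuous proof, is replacing the missing chain rule by the factor $\big(\max\{\bar u(x),\bar u(y)\}\big)^{\alpha-1}$ and then checking that it still behaves correctly on edges from $\Omega$ to $\delta\Omega$ (where $\bar u$ vanishes at one endpoint); it is precisely there that the normalized weight identity $\nu_x=\sum_y\mu_{xy}$ is needed, to fold those boundary contributions back into $\|u\|_{s,\Omega}^{s}$. Everything else is Hölder together with elementary monotonicity/convexity estimates, and the hypothesis $p<s$ (i.e.\ $\alpha>1$) is used both for the pointwise inequality and to keep the Hölder exponents positive.
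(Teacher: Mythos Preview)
Your proof is correct and follows essentially the same route as the paper's: take a positive first $s$-eigenfunction $u$, test the $p$-Rayleigh quotient with $u^{s/p}$, control the edge differences, apply H\"older on $(E,\mu)$ with exponents $\tfrac{s}{s-p},\tfrac{s}{p}$, and absorb the max-factor via the normalized weight $\nu_x=\sum_y\mu_{xy}$. The only cosmetic difference is that the paper writes the discrete chain-rule substitute as an averaged integral $\fint_{\bar u(x)}^{\bar u(y)} t^{(s-p)/p}\,dt$ and then bounds it by Jensen, whereas you go straight to the equivalent (and slightly cleaner) pointwise inequality $|b^{\alpha}-a^{\alpha}|\le\alpha(\max\{a,b\})^{\alpha-1}|b-a|$.
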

\begin{proof}
Let $u$ be a first eigenfunction of Dirichlet $s$-Laplacian on $\Omega$
satisfying $u>0$ on $\Omega$ and $\|u\|_{s,\Omega}=1.$ Then
$\|\nabla \bar{u}\|_{s,E}^s=\lambda_{1,s}(\Omega).$
Let $h=u^{\frac{s}{p}}.$ Then $\|h\|_{p,\Omega}=1.$
By the Rayleigh quotient characterization (\ref{Rayleigh quotient}),
\begin{equation}\label{e:eq1}
p\lambda_{1,p}(\Omega)^{\frac1p}\leq p\|\nabla h\|_p.
\end{equation}
Let $S:=\{\{x,y\}\in E: \bar{h}(x)\neq \overline{h}(y)\}.$
For any given $\{x,y\}\in S$, by the symmetry $\{x,y\}=\{y,x\}$, we always assume that $\bar{u}(x)<\bar{u}(y)$,
\begin{eqnarray*}
|\nabla_{xy} \bar{h}|
&=&\bar{u}^{\frac{s}{p}}(y)-\bar{u}^{\frac{s}{p}}(x)
=\frac{s}{p}\int_{\bar{u}(x)}^{\bar{u}(y)}t^{\frac{s-p}{p}}dt\\
&=&\frac{s}{p}\fint_{\bar{u}(x)}^{\bar{u}(y)}t^{\frac{s-p}{p}}dt|\bar{u}(y)-\bar{u}(x)|,
\end{eqnarray*}
where $\fint_{\bar{u}(x)}^{\bar{u}(y)}$ denotes $\frac{1}{\bar{u}(y)-\bar{u}(x)}\int_{\bar{u}(x)}^{\bar{u}(y)}.$
Noting that $\frac{ps}{s-p}>1$ and $s>1,$ applying H\"{o}lder inequality, we obtain
\begin{eqnarray*}
\|\nabla \bar{h}\|_{p,S}^p&=&\Big(\frac{s}{p}\Big)^{p}\sum_{\{x,y\}\in S}
\Big(\fint_{\bar{u}(x)}^{\bar{u}(y)}t^{\frac{s-p}{p}}dt\Big)^p|\bar{u}(y)-\bar{u}(x)|^p\mu_{xy}\\
&\leq&\Big(\frac{s}{p}\Big)^{p}\Big(\sum_{\{x,y\}\in S}
\Big(\fint_{\bar{u}(x)}^{\bar{u}(y)}t^{\frac{s-p}{p}}dt\Big)^{\frac{ps}{s-p}}\mu_{xy}\Big)^{\frac{s-p}{s}}\\
& &\ \ \ \times\Big(\sum_{\{x,y\}\in S}|\bar{u}(y)-\bar{u}(x)|^s{\mu_{xy}}\Big)^{\frac{p}{s}}.
\end{eqnarray*}
From the H\"{o}lder inequality, we have
$$\Big(\fint_{\bar{u}(x)}^{\bar{u}(y)}t^{\frac{s-p}{p}}dt\Big)^{\frac{ps}{s-p}}\leq \fint_{\bar{u}(x)}^{\bar{u}(y)}t^sdt\leq \bar{u}^s(y).$$
Combining the above two inequalities, together with $\frac{s-p}{s}<1$,
we have
\begin{align}\begin{split}\label{eq2}
\|\nabla \bar{h}\|_{p,E}^p=\|\nabla \bar{h}\|_{p,S}^p
&\leq \Big(\frac{s}{p}\Big)^{p}\big(\sum_{\{x,y\}\in S}\bar{u}^s(y)\mu_{xy}\big)^\frac{s-p}{s}\big(\|\nabla \bar{u}\|_{s,S}^s\big)^\frac{p}{s}\\
&\leq  \Big(\frac{s}{p}\Big)^{p}\|u\|_{s,\Omega}^s\big(\|\nabla \bar{u}\|_{s,E}^s\big)^\frac{p}{s}\\
&= \Big(\frac{s}{p}\Big)^{p}\lambda^\frac{p}{s}_{1,s}(\Omega).
\end{split}\end{align}
Combining (\ref{e:eq1}) with (\ref{eq2}), we get the desired result.
\end{proof}

\section{Maximum eigenfunctions to Dirichlet p-Laplacians on bipartite subgraphs}
Recall that a graph is called a bipartite graph if its vertices can be divided into two disjoint sets $V_1$ and $V_2$ such that every edge connects a vertex in $V_1$ to one in $V_2$.  Vertex sets $V_1$ and $V_2$ are usually called the parts of the graph. In this section, we obtain an equivalent characterization for maximum eigenfunctions on a bipartite subgraph.

Let $S$ be the involution defined as (\ref{absolu-op}).
Now we give an example to show the relationship between first eigenfunctions and maximum eigenfunctions for Dirichlet $p$-Laplacian when $p\neq 2$.
\begin{figure}[htbp]
 \begin{center}
   \begin{tikzpicture}
    \node at (0,0){\includegraphics[width=0.6\linewidth]{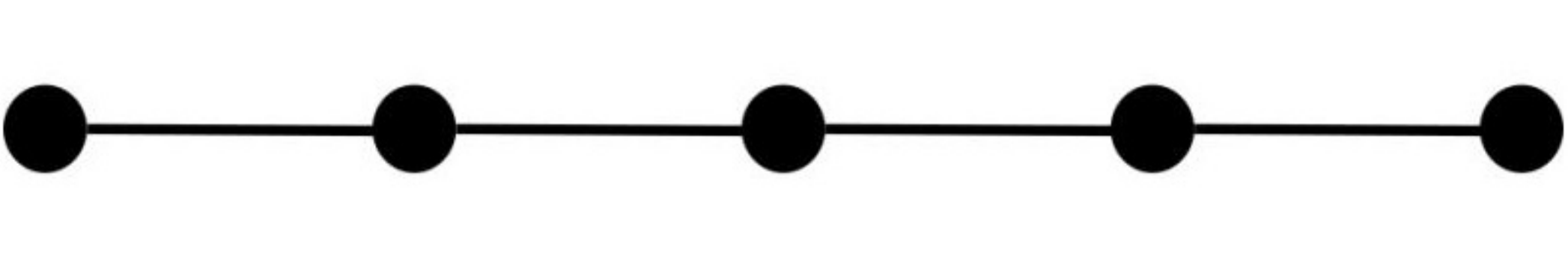}};
    \node at (-3.6, -.7){\Large $v_0$};
    \node at (-1.8, -.7){\Large $v_1$};
    \node at (0, -.7){\Large $v_2$};
    \node at (1.8, -.7){\Large $v_3$};
    \node at (3.6,   -.7){\Large $v_4$};
    \end{tikzpicture}
  \caption{\small }
\label{fig1}
 \end{center}
\end{figure}

\begin{example}\label{conter-exam}
Let $G=(V,E,\nu,\mu)$ be a weighted graph with $V=\{v_0,v_1,v_2,\\ v_3,v_4\}$ as shown in Figure \ref{fig1}. 
Assume $\mu_{v_iv_j}=1$ for $v_i\sim v_j$ and $ v_i, v_j\in V$, $\nu_{v_i}=\sum_{v_j\in V}\mu_{v_iv_j}$ for $0\leq i\leq 4$.
For $\Omega=\{v_1,v_2,v_3\}$,  by direct computation,  the first eigenfunction and maximum eigenfunction for $\Delta_p$, $p=4$, are
\[u_1(v_1)=0.422207, \ u_1(v_2)=0.966286,\ u_1(v_3)=0.422207,\]
and
\[ u_{\max}(v_1)=0.696725,\ u_{\max}(v_2)=-0.852721,\ u_{\max}(v_3)=0.696725,\]
where $\|u_1\|_{p,\Omega}=\|u_{\max}\|_{p,\Omega}=1$.
\end{example}

Obviously, $S(u_1)\neq u_{\max}$. This example indicates that there is no close  relation between first and maximum eigenfunctions for $p\neq 2$.

Next we describe the sign property of maximum eigenfunctions for $p$-Laplacian on a bipartite subgraph $\Omega$.

\begin{satz}\label{opp-sign}
Let $G=(V,E,\nu,\mu)$ be a weighted graph and $\Omega$ be a finite connected bipartite subgraph of $G$. If $f$ is a maximum eigenfunction of Dirichlet $p$-Laplacian on $\Omega$, then $f$ satisfies $f(x)f(y)<0$ for $x\sim y$ and $x,y\in \Omega$.
\end{satz}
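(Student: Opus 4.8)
The plan is to mirror the structure of the proof of Theorem~\ref{first-eigen}, replacing the ``absolute value'' reduction by a ``signed absolute value'' reduction adapted to the bipartite splitting $\Omega=\Omega_1\sqcup\Omega_2$. Given a maximum eigenfunction $f$ with $\|f\|_{p,\Omega}=1$, I would set $g:=S(|f|)$, i.e.\ $g(x)=|f(x)|$ on $\Omega_1$ and $g(x)=-|f(x)|$ on $\Omega_2$ (with null-extension $\bar g$). The key pointwise inequality is that for $x\sim y$, $x,y\in\Omega$ (so $x\in\Omega_1$, $y\in\Omega_2$ or vice versa),
\[
|\bar g(y)-\bar g(x)|=|f(x)|+|f(y)|\geq |\,|f(y)|-|f(x)|\,|\geq |\bar f(y)-\bar f(x)|,
\]
while for edges with an endpoint in $\delta\Omega$ the two sides agree (since $|\bar g|=|\bar f|$ and one endpoint value is $0$). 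Summing $p$-th powers gives $\|\nabla\bar g\|_{p,E}^p\geq\|\nabla\bar f\|_{p,E}^p=\lambda_{m,p}$, and since $\|g\|_{p,\Omega}=\|f\|_{p,\Omega}=1$, the Rayleigh characterization \eqref{eq:Rayleigh} forces equality throughout. Equality in the first displayed inequality at every interior edge means $|f(x)|\cdot|f(y)|$ cannot both be positive with the wrong sign configuration — more precisely it forces, for each interior edge $\{x,y\}$, that $f(x)$ and $f(y)$ are not both zero and have the "opposite-sign across parts" configuration; combined with connectedness this will propagate a definite sign pattern. So $g$ is itself a maximum eigenfunction with $g(x)g(y)<0$ on interior edges by construction, and then I transfer the strict inequality back.

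To upgrade ``$f(x)f(y)\le 0$ everywhere'' to ``$f(x)f(y)<0$ everywhere'' I would argue that $|g|=|f|$ must be strictly positive on $\Omega$. For this, note $|g|$ satisfies a first-eigenfunction-type relation: from equality in the chain above, $\|\nabla|\bar g|\|_{p,E}\le\|\nabla\bar g\|_{p,E}$ is actually reversed or, better, observe directly that if $|f|(x_0)=0$ at some $x_0\in\Omega$ then plugging $x_0$ into the eigenequation \eqref{eigen-equation} for $f$ (using $|f|^{p-2}f(x_0)=0$) gives $\sum_{y}\mu_{x_0 y}|\bar f(y)|^{p-2}\bar f(y)=0$; but once we know the sign pattern of $f$ is ``alternating and weakly consistent,'' all the neighbours $\bar f(y)$ with $y\sim x_0$ have the same sign (the one dictated by the part opposite to $x_0$'s part), so each term has the same sign and the sum vanishes only if $\bar f(y)=0$ for all $y\sim x_0$. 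Connectedness of $\Omega$ then forces $f\equiv 0$, a contradiction — this is exactly the mechanism of Lemma~\ref{nonzero}, applied after the sign-alternation is in hand. Hence $f\neq 0$ on $\Omega$, and combined with $f(x)f(y)\le 0$ on interior edges and connectedness we get $f(x)f(y)<0$ on every interior edge.

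I expect the main obstacle to be the equality-discussion in the pointwise inequality: extracting from ``$\|\nabla\bar g\|_p^p=\|\nabla\bar f\|_p^p$'' the correct consequence at each interior edge. One has to be careful because the inequality $|f(x)|+|f(y)|\ge|\bar f(y)-\bar f(x)|$ is an equality precisely when $f(x)$ and $f(y)$ have opposite signs \emph{or at least one of them is zero}, so equality alone only yields $f(x)f(y)\le 0$ — the strictness genuinely requires the separate non-vanishing argument of the previous paragraph, and the logical order (first weak sign condition, then use it to rerun Lemma~\ref{nonzero}'s computation, then conclude strictness) must be kept straight. A secondary technical point is handling boundary edges: since $\bar f$ and $\bar g$ vanish on $\delta\Omega$ and agree in absolute value, those edge contributions are literally equal, so they drop out of the comparison and cause no trouble; I would state this explicitly to keep the summation-of-$p$-th-powers step clean. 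The uniqueness claim in Theorem~\ref{maxi-funct} is not part of this proposition and I would defer it (it will follow from a convexity argument analogous to Lemma~\ref{one-multi}, applied to $(|u_1|^p+|u_2|^p)^{1/p}$ after the sign normalization $S$).
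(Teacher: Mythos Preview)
Your overall strategy matches the paper's proof exactly: introduce $g:=S(|f|)$, compare $E_p(g)\ge E_p(f)=\lambda_{m,p}$ using the pointwise inequality $|f(x)|+|f(y)|\ge |\bar f(y)-\bar f(x)|$, deduce equality and hence the weak sign condition $f(x)f(y)\le 0$ on interior edges, and then rule out zeros of $f$ by a Lemma~\ref{nonzero}-type propagation.

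There is, however, a genuine gap in your non-vanishing step. You plug $x_0$ into the eigen-equation \emph{for $f$} and then claim that ``all the neighbours $\bar f(y)$ with $y\sim x_0$ have the same sign (the one dictated by the part opposite to $x_0$'s part).'' The weak condition $f(x)f(y)\le 0$ does not justify this: since $f(x_0)=0$, the inequality $f(x_0)f(y)\le 0$ is vacuous and says nothing about the sign of $f(y)$. In general the zero set of $f$ can separate $\Omega$ into pieces on which $f$ carries opposite sign conventions, so two neighbours $y_1,y_2\sim x_0$ in the same part may well have $f(y_1)>0$ and $f(y_2)<0$, and the sum $\sum_y\mu_{x_0y}|\bar f(y)|^{p-2}\bar f(y)$ need not be sign-definite.

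The fix is the one the paper uses, and it is already at your fingertips: run the eigen-equation argument with $g$ instead of $f$. You have shown that $g$ is a maximum eigenfunction, so it satisfies \eqref{eigen-equation}; and $g=S(|f|)$ has, \emph{by construction}, a fixed sign on each bipartite part. Hence if $g(x_0)=|f(x_0)|=0$ with $x_0\in V_1$, every neighbour $y\in\Omega$ lies in $V_2$ and $g(y)=-|f(y)|\le 0$, so
\[
0=\Delta_p g(x_0)=\frac{1}{\nu_{x_0}}\sum_{y}\mu_{x_0y}|\bar g(y)|^{p-2}\bar g(y)
\]
is a sum of terms of one sign, forcing $|f(y)|=0$ for all $y\sim x_0$, $y\in\Omega$. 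Connectedness then gives $f\equiv 0$, the desired contradiction. In short: keep your proof, but in the last paragraph replace ``eigen-equation for $f$'' by ``eigen-equation for $g$''.
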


\begin{proof}
Firstly we assume that $f$ is a maximum eigenfunction satisfying $\|f\|_{p,\Omega}=1$. It's sufficient to show that $f$ satisfies $f(x)f(y)<0$ for $x\sim y$ and $x,y\in \Omega$.
Let $h=S(|f|)$ with the involution $S$ defined as (\ref{absolu-op}), and $\bar{h}$, $\bar{f}$ be null-extension of $f$, $h$ defined as (\ref{extension}), respectively. Since $\Omega$ is a connected bipartite subgraph, then
\[\left(|\bar{f}(y)|+|\bar{f}(x)|\right)^p
=\left|\bar{h}(y)-\bar{h}(x)\right|^p, \forall x,y\in \Omega, x\sim y.\]
Hence, we have
\begin{align}\begin{split}\label{maxeigen-two-sides}
\lambda_{m,p}=E_p(f)&\leq\frac{1}{2}\sum\limits_{x,y\in V}\left(|\bar{f}(y)|+|\bar{f}(x)|\right)^p\mu_{xy}\\
&=\frac{1}{2}\sum\limits_{x,y\in V}\left|\bar{h}(y)-\bar{h}(x)\right|^p\mu_{xy}\\
&\leq \sup_{\begin{subarray}{l} 0\neq g\in \R^{\Omega} \\ \|g\|_{p,\Omega}=1\end{subarray}}
\|\nabla \bar{g}\|_{p,E}^p
=\lambda_{m,p}.
\end{split}\end{align}
Then the inequalities in (\ref{maxeigen-two-sides}) have to be equalities, which implies that
\begin{align}\begin{split}\label{max-eigenequation-bipart}
\lambda_{m,p}=\frac{1}{2}\sum\limits_{x,y\in V}\left|\bar{h}(y)-\bar{h}(x)\right|^p\mu_{xy}=\|\nabla \bar{h}\|_{p,E}^p
\end{split}\end{align}
and
\begin{align}\begin{split}\label{diff-sign}
f(x)f(y)\leq 0, \ \forall \ x,y\in\Omega, x\sim y.
\end{split}\end{align}
By (\ref{diff-sign}), it suffices to show that there is no vertex in $\Omega$ such that $f(x)=0$. We show this by contradiction. Suppose there is $x_0\in \Omega$ such that $f(x_0)=0$, then $h(x_0)=0$. By Green's formula (ref. \cite{Grigor09}), (\ref{max-eigenequation-bipart}) yields that $h$ is an eigenfunction satisfying eigen-equation (\ref{eigen-equation}). By (\ref{Dir-p-Lap}),
\begin{align}\begin{split}\label{zero-point}
0=-\lambda_{m,p} h^{p-1}(x_0)
=\Delta_p h(x_0)
=\frac{1}{\nu_{x_0}}
\sum\limits_{y\in \Omega}|h(y)|^{p-2}h
(y)\mu_{x_0y}.
\end{split}\end{align}
By $\Omega$ is a connected bipartite subgraph with bipartite parts $V_1, V_2$ and $h=S(|f|)$,
\begin{align}\begin{split}\label{key-ineq}
\sum\limits_{y\in \Omega}|h(y)|^{p-2}h
(y)\mu_{x_0y}=
\begin{cases}
-\sum\limits_{y\in V_2}|f|^{p-1}(y)\mu_{x_0y},\ \ x_0\in V_1,\\
\sum\limits_{y\in V_1}|f|^{p-1}(y)\mu_{x_0y},\ \ x_0\in V_2.
\end{cases}
\end{split}\end{align}
Combining (\ref{zero-point}) and (\ref{key-ineq}), we obtain $f(y)=0$ for $y\in \Omega$ and $y\sim x_0$. Since $\Omega$ is connected, $f\equiv 0$, that is, $h\equiv 0$. This contradict to $h\not\equiv 0$ since $h$ is an eigenfunction.
We get the desired result.
\end{proof}
To prove the other direction of Theorem \ref{maxi-funct}, we need the some lemmas.

We write
$\Omega=\{v_1,v_2,\cdots, v_N\}$ and the vertex boundary \[\delta\Omega=\{v_{N+1},\cdots, v_{N+b}\}.\]
For any function $u:\R^\Omega\to \R$, let $\bar{u}_i:=\bar{u}(v_i)$.
For simplicity, we write $\mu_{ij}=\mu_{v_iv_j}, \nu_i=\nu_{v_i}$ for $1\leq i,j\leq N+b$.
Then the conditions $\|u\|^p_{p,\Omega}=1$ and $u\big|_{\delta\Omega}=0$ are given by
$\sum_{1\leq i\leq N}|u_i|^p\nu_i=1$ and $u_{N+1}=\cdots=u_{N+b}=0$, respectively.
Hence,
$\bar{u}$ restricted $\Omega\cup\delta\Omega$ corresponds to a vector $\left(u_1, \cdots, u_N,0\cdots, 0\right)\in \R^{N+b}$.

Let $S_1^p:=\left\{u\in \R^{N+b}\Big|\sum_{1\leq i\leq N}|u_i|^p\nu_i=1, u_{N+1}=\cdots=u_{N+b}=0\right\}.$ Then
the eigenvalue problem of Dirichlet $p$-Laplacian on $\Omega$ is to find the critical values of the functional
\begin{equation}\label{eq:critical point Ep1}
E_p: S_1^p\to \R,\ \
E_p(u)=\frac12\sum_{1\leq i, j\leq N+b}|u_i-u_j|^p\mu_{ij}.
\end{equation}
We denote 
\[\R_+^N:=\left\{(x_1,x_2,\cdots,x_N,0,\cdots,0)\in\R^{N+b}\Big| x_i>0, 1\leq i\leq N\right\}\]
and define another variational problem $K_p: A_1\to \R$ on $\R_+^N$ as follows.
\begin{align}\begin{split}\label{eq:critical point Ep}
K_p(h):=\frac12\sum\limits_{1\leq i,j\leq N+b}|h_i+h_j|^p\mu_{ij},\  A_1:=\Big\{h\in \R^N_+|\sum\limits_{1\leq i\leq N}h_i^p\nu_i=1\Big\}.
\end{split}\end{align}

\begin{defi}
Let $E_i: S_i\rightarrow \mathbb R$, $i=1,2$, be smooth functionals on smooth manifolds $S_i$.
We say that $E_1$ and $E_2$ are equivalent under the map $P: S_1\rightarrow S_2$ if
\begin{enumerate}
\item $P$ is a diffeomorphism between $S_1$ and $S_2$;
\item $u$ is a critical point of $E_1$ with the critical value $\lambda$ if and only if $P(u)$ is a critical point of $E_2$ with the critical value $\lambda$.
\end{enumerate}
\end{defi}

\begin{lemma}\label{Equi-12}
Let $\Omega$ be a bipartite subgraph of the weighted graph $G$ with the parts $V_1$ and $V_2$. Set
\[
S_1':=\left\{u\in S_1^p| u_i=u(v_i)>0\ \textrm{for} \ v_i\in V_1, u_iu_j<0\ \textrm{for}\ \mu_{ij}>0\right\}.
\]
Then the variational problem (\ref{eq:critical point Ep1}) on $S_1'$ and the variational problem (\ref{eq:critical point Ep}) are equivalent under the map
$\bar{S}: S_1'\to A_1$ defined as
\begin{align*}
h_i=\bar{S}(u)(v_i):=
\begin{cases}
u(v_i), \ \ & v_i\in V_1\\
-u(v_i), &v_i\in V_2\\
0,    &v_i\in\delta\Omega.
\end{cases}
\end{align*}
\end{lemma}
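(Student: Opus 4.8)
The plan is to verify the two conditions in the definition of equivalence directly. For the first condition, I would show that $\bar S: S_1'\to A_1$ is a diffeomorphism by exhibiting an explicit inverse. The map $\bar S$ simply flips the sign of the coordinates indexed by $V_2$ (and leaves the boundary coordinates at $0$), so its inverse $\bar S^{-1}: A_1\to S_1'$ is the same sign-flip. I must check that $\bar S$ really maps $S_1'$ into $A_1$: if $u\in S_1'$ then $u_i>0$ on $V_1$ and $u_iu_j<0$ for $\mu_{ij}>0$; since $\Omega$ is bipartite with parts $V_1,V_2$, every edge of $\Omega$ joins $V_1$ to $V_2$, so $u_i<0$ on $V_2$, hence $h_i=|u_i|>0$ on all of $\Omega$, i.e. $h\in\R^N_+$; and $\sum_i h_i^p\nu_i=\sum_i|u_i|^p\nu_i=1$, so $h\in A_1$. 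Conversely $\bar S^{-1}$ sends $h\in A_1$ (so $h_i>0$ on $\Omega$) to a $u$ with $u_i=h_i>0$ on $V_1$ and $u_i=-h_i<0$ on $V_2$; since every edge is between the parts, $u_iu_j=-h_ih_j<0$ whenever $\mu_{ij}>0$, and $\sum_i|u_i|^p\nu_i=1$, so $u\in S_1'$. Both maps are restrictions of linear maps on $\R^{N+b}$ to (relatively open subsets of) the smooth submanifolds $S_1^p$ and $A_1$, so smoothness in both directions is immediate, and they are mutually inverse. This gives the diffeomorphism.

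For the second condition I would show that $E_p$ and $K_p$ agree as functions along $\bar S$, i.e. $E_p(u)=K_p(\bar S(u))$ for every $u\in S_1'$; this immediately forces critical points and critical values to correspond. Fix $u\in S_1'$ and put $h=\bar S(u)$. For an edge $\{v_i,v_j\}$ with $\mu_{ij}>0$ inside $\Omega$, one of the endpoints is in $V_1$ and the other in $V_2$ (bipartiteness), so $u_i$ and $u_j$ have opposite signs while $h_i,h_j>0$; hence $|u_i-u_j|=|u_i|+|u_j|=h_i+h_j=|h_i+h_j|$. For an edge with one endpoint $v_j\in\delta\Omega$ we have $u_j=0=h_j$ and the other endpoint $v_i$ satisfies $|u_i-u_j|=|u_i|=h_i=|h_i+h_j|$ (the boundary vertex lies in exactly one part, but either way $h_j=0$). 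Squaring to the $p$-th power and summing with weights $\mu_{ij}$ gives $E_p(u)=K_p(h)$ term by term.

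Finally I would translate the equality of functions into the statement about critical points. Since $\bar S$ is a diffeomorphism of smooth manifolds and $E_p=K_p\circ\bar S$ on $S_1'$, for any $u\in S_1'$ and $h=\bar S(u)$ we have $dE_p(u)=dK_p(h)\circ d\bar S(u)$, and $d\bar S(u)$ is an isomorphism of tangent spaces; hence $dE_p(u)=0$ iff $dK_p(h)=0$, and the common value $E_p(u)=K_p(h)$ is then the critical value. (One should note that critical points of $E_p$ on $S_1^p$ are understood in the constrained sense, via Lagrange multipliers on the manifold $S_1^p$; the same convention applies to $K_p$ on $A_1$, and the identity $E_p=K_p\circ\bar S$ respects it because $\bar S$ is a diffeomorphism of the constraint manifolds themselves, not merely of the ambient space.) This completes the verification of both defining conditions.

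I do not expect a serious obstacle here; the only point that needs care is bookkeeping the three types of incident pairs (both endpoints in $V_1\cup V_2$ with the edge crossing the parts; one endpoint on $\delta\Omega$; and the degenerate diagonal term $i=j$, which contributes $0$ on both sides) so that the edgewise identity $|u_i-u_j|=|h_i+h_j|$ is genuinely valid for every pair appearing in the sums \eqref{eq:critical point Ep1} and \eqref{eq:critical point Ep}. Establishing that identity is the crux; everything else is formal.
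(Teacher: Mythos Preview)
Your proposal is correct and follows essentially the same approach as the paper: verify that $\bar S$ is a bijection (diffeomorphism) between the constraint sets, establish the term-by-term identity $|u_i-u_j|=|h_i+h_j|$ so that $E_p(u)=K_p(h)$, and deduce the correspondence of critical points and values. The only cosmetic difference is that the paper phrases the last step via Lagrange multipliers and the relation $\partial/\partial u_i=\pm\,\partial/\partial h_i$, whereas you invoke the chain rule $dE_p=dK_p\circ d\bar S$; these are equivalent formulations of the same observation.
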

\begin{proof}
Obviously, $\bar{S}: S_1'\to A_1$ is a one-to-one mapping and its inverse mapping is $\bar{S}^{-1}=\bar{S}$.
For any $u\in S_1'$,  set $h=\bar{S}(u)$, this implies that $|u_i-u_j|^p\mu_{ij}=|h_i+h_j|^p\mu_{ij}$ for any $1\leq i,j\leq N+b$.
Hence, we have $E_p(u)=K_p(h).$
We also obtain $h_i=|u_i|$ from $u_iu_j<0\ \textrm{for}\ \mu_{ij}>0$,  then the constraint condition $\sum\limits_{1\leq i\leq N}|u_i|^p\nu_i=1$
is equivalent to $\sum\limits_{1\leq i\leq N}h_i^p\nu_i=1.$

Using Lagrange multiplier method, together with $\frac{\partial}{\partial u_i}=c\frac{\partial}{\partial h_i}$ for $1\leq i\leq N$ with $c=\pm1$,
we obtain the critical value of the functional $E_p$ on $S_1'$ is same as the critical value of the functional $K_p(u)$ on $A_1$.
\end{proof}

We introduce a new variational problem $Q_p: P_1\to \R$ as follows.
\begin{align}\begin{split}\label{e:reformulation}
 Q_p(g):=\frac12\sum\limits_{1\leq i, j\leq N}|g_i^{\frac1p}+g_j^{\frac1p}|^p\mu_{ij}, \ \
 P_1:=\Big\{g\in \R^N_+|\sum\limits_{1\leq i\leq N}g_i\nu_i=1\Big\}.
\end{split}
\end{align}

\begin{lemma}\label{Equivalence}
The variational problem (\ref{eq:critical point Ep}) and (\ref{e:reformulation}) are equivalent under the map:
\[
T: A_1\to P_1,\ \  T(h)=h^p.
\]
\end{lemma}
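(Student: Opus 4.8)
The plan is to show that the map $T(h)=h^p$ (meaning $T(h)_i = h_i^p$ for $1\le i\le N$) is a diffeomorphism from $A_1$ onto $P_1$ and carries critical points to critical points with the same critical value. The two conditions of the definition of equivalence must be verified in turn.

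First I would check that $T$ is a well-defined diffeomorphism between the two smooth manifolds. Since $h\in A_1\subset \R_+^N$ has all coordinates $h_i>0$, the componentwise $p$-th power $g_i=h_i^p$ is smooth with smooth inverse $h_i=g_i^{1/p}$ on $\R_+^N$, and the constraint $\sum_i h_i^p\nu_i=1$ becomes precisely $\sum_i g_i\nu_i=1$, i.e. $g\in P_1$. Thus $T$ maps $A_1$ bijectively onto $P_1$, it is smooth, and so is $T^{-1}(g)=g^{1/p}$; moreover $A_1$ and $P_1$ are smooth embedded hypersurfaces in $\R_+^N$ (the defining functions have nonvanishing gradients on $\R_+^N$), so $T$ is a diffeomorphism. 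Substituting $h_i = g_i^{1/p}$ directly into $K_p$ gives
\[
K_p(h)=\frac12\sum_{1\le i,j\le N+b}|h_i+h_j|^p\mu_{ij}
=\frac12\sum_{1\le i,j\le N}|g_i^{1/p}+g_j^{1/p}|^p\mu_{ij}=Q_p(g),
\]
where the reduction of the index range from $N+b$ to $N$ uses that $h_i=0$ (hence $g_i=0$) for $i>N$, so those terms contribute $\frac12\sum_{i>N\text{ or }j>N}|h_i|^p\mu_{ij}$ on one side and the corresponding thing on the other — in fact one should be slightly careful: in $K_p$ the boundary vertices appear with $h_i=0$, while $Q_p$ is written only over $1\le i,j\le N$, so strictly the claim is $K_p(h)=Q_p(g)+\frac12\sum_{i\le N<j}|h_i|^p\mu_{ij}\cdot 2$; I would either absorb this by noting $Q_p$ as defined already omits the boundary edges, in which case the statement to prove is exactly that $E_p$ and $K_p$ restricted appropriately match $Q_p$ plus a fixed boundary contribution, or (cleaner) observe that along $A_1$ one has $h_i=0$ for $i>N$ identically, so the boundary-edge terms are the same constant function of $h$ and of $g$ and play no role in the critical point analysis. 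I will take the latter reading, consistent with how Lemma~\ref{Equi-12} was handled.

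Next I would transfer the critical point condition. Because $T$ is a diffeomorphism and $K_p = Q_p\circ T$ (up to the fixed boundary constant discussed above), for any $h\in A_1$ and its image $g=T(h)\in P_1$ we have, by the chain rule, that $dK_p|_h = dQ_p|_g\circ dT|_h$, and likewise the differential of the constraint functions correspond under $dT|_h$. Hence $h$ is a constrained critical point of $K_p$ on $A_1$ — i.e. there is a Lagrange multiplier $\lambda$ with $dK_p|_h = \lambda\, d(\sum_i h_i^p\nu_i)|_h$ — if and only if $g$ is a constrained critical point of $Q_p$ on $P_1$ with the same multiplier $\lambda$ relative to the constraint $d(\sum_i g_i\nu_i)|_g$; one only needs that $dT|_h$ is a linear isomorphism sending the tangent space $T_hA_1$ onto $T_gP_1$, which holds since $T$ is a diffeomorphism of the hypersurfaces. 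Finally, the critical value is $K_p(h)=Q_p(g)$ by the identity above, so the critical values coincide. This establishes both conditions (1) and (2) of the definition, completing the proof.

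The only genuinely delicate point is bookkeeping for the boundary vertices $v_{N+1},\dots,v_{N+b}$: the functionals $E_p$, $K_p$ are written with sums over $1\le i,j\le N+b$ while $Q_p$ is written over $1\le i,j\le N$, and one must confirm that the discrepancy (edges from $\Omega$ to $\delta\Omega$, on which $\bar h$ vanishes at the boundary end) is a constant on the constraint manifold and therefore irrelevant to the variational problem — equivalently that differentiating in the interior variables $h_1,\dots,h_N$ sees exactly the $Q_p$-type terms. Everything else is the standard fact that a variational problem pulls back under a diffeomorphism, so the main work is simply making this identification precise and checking the Lagrange-multiplier correspondence cleanly, exactly parallel to the proof of Lemma~\ref{Equi-12}.
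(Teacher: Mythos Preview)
Your overall strategy matches the paper's: establish that $T$ is a diffeomorphism with $K_p=Q_p\circ T$, then transfer the constrained-critical-point condition by the chain rule. The paper carries this out by writing $\frac{\partial}{\partial g_i}=\frac{1}{ph_i^{p-1}}\frac{\partial}{\partial h_i}$ explicitly and checking that the Lagrange systems \eqref{K-Lag-mul} and \eqref{Q-Lag-mul} are equivalent; your abstract chain-rule formulation is the same argument.

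There is, however, a genuine error in your handling of the boundary edges. The discrepancy between $K_p(h)$ and $Q_p(T(h))$ arising from pairs with $i\le N<j$ is
\[
\sum_{i\le N,\ j>N}|h_i+0|^{p}\mu_{ij}
=\sum_{i=1}^{N}h_i^{p}\sum_{j=N+1}^{N+b}\mu_{ij}
=\sum_{i=1}^{N}g_i\sum_{j=N+1}^{N+b}\mu_{ij},
\]
and this is \emph{not} constant on $A_1$: the constraint fixes $\sum_i h_i^{p}\nu_i$, not $\sum_i h_i^{p}\sum_{j>N}\mu_{ij}$, and these differ unless $\big(\sum_{j>N}\mu_{ij}\big)/\nu_i$ happens to be independent of $i$. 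Its derivative with respect to $g_k$ equals $\sum_{j>N}\mu_{kj}$, which is in general nonzero, so these terms do alter the Lagrange equations and cannot simply be discarded. Both of your proposed resolutions (``constant on the constraint manifold'' and ``differentiating in the interior variables sees exactly the $Q_p$-type terms'') therefore fail.

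The correct fix is simply to read the summation in $Q_p$ over $1\le i,j\le N+b$ (with $g_j=0$ for $j>N$), matching the range in $K_p$. Then $(h_i+h_j)^p=(g_i^{1/p}+g_j^{1/p})^p$ for every pair, so $K_p(h)=Q_p(T(h))$ holds identically and your chain-rule argument goes through verbatim; this is exactly how the paper proceeds, asserting $Q_p(g)=K_p(h)$ without further comment. The extra boundary contribution is affine in $g$, so Lemma~\ref{lem:concavity} and the downstream concavity argument are unaffected.
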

\begin{proof}
It's easy to observe that $T$ is a bijective and $T^{-1}: P_1\to A_1,\ \  T^{-1}(g)=g^\frac{1}{p}. $
By Lagrange multiplier method, it suffices to show that
\begin{align}\begin{split}\label{K-Lag-mul}
\frac{\partial }{\partial h_i}\big[K_p(h)
    +\lambda(1-\sum_{1\leq i\leq N}h_i^p\nu_i)\big]=0, \ \ 1\leq i\leq N
\end{split}\end{align}
is equivalent to
\begin{align}\begin{split}\label{Q-Lag-mul}
\frac{\partial}{\partial g_i}\big[Q_p(g)+\lambda\big(1-\sum\limits_{1\leq i\leq N}g_i\nu_i\big)\big]=0,\ \ 1\leq i\leq N
\end{split}\end{align}
for some $\lambda\neq 0$.

On one hand, by (\ref{eq:critical point Ep}), (\ref{e:reformulation}) and  $g=T(h)=h^p$, we obtain
$Q_p(g)=Q_p(T(h))=K_p(h)$. Combining this with $\frac{\partial }{\partial g_i}=\frac{1}{ph_i^{p-1}}\frac{\partial}{\partial h_i}$ for $1\leq i\leq N$, we obtain that (\ref{K-Lag-mul}) yields (\ref{Q-Lag-mul}).
On the other hand, by (\ref{eq:critical point Ep}), (\ref{e:reformulation}) and  $h=T^{-1}(g)=g^\frac{1}{p}$,
$K_p(h)=K_p\left(T^{-1}(g)\right)=Q_p(g^\frac{1}{p})$. Using
$\frac{\partial}{\partial h_i}=
p g_i^{1-\frac{1}{p}}\frac{\partial}{\partial g_i}$, we obtain
that (\ref{Q-Lag-mul}) yields (\ref{K-Lag-mul}).
This proves the lemma.
\end{proof}



\begin{lemma}\label{lem:concavity}
  $Q_p:\R^N_+\to \R$ is a concave function.
\end{lemma}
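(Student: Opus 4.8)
The plan is to reduce the concavity of $Q_p$ to the concavity of the single two‑variable function $\phi(a,b):=(a^{1/p}+b^{1/p})^p$ on the open positive quadrant, and then to establish that concavity. Since every coordinate of $g\in\R^N_+$ is strictly positive, the absolute values in \eqref{e:reformulation} are superfluous, so $Q_p(g)=\frac12\sum_{1\le i,j\le N}(g_i^{1/p}+g_j^{1/p})^p\mu_{ij}$. The diagonal terms carry weight $\mu_{ii}$, which vanishes because the underlying graph of $\Omega$ is simple (and in any event the diagonal contribution $2^{p-1}\sum_i g_i\mu_{ii}$ is linear, hence concave, in $g$). Each off‑diagonal term $(g_i^{1/p}+g_j^{1/p})^p\mu_{ij}$ has $\mu_{ij}\ge 0$ and depends only on the two coordinates $g_i,g_j$; since the composition of a concave function of two variables with the projection onto a coordinate pair is concave as a function on $\R^N$, and a nonnegative linear combination of concave functions is concave, it suffices to show that $\phi(a,b)=(a^{1/p}+b^{1/p})^p$ is concave on $\{(a,b):a,b>0\}$.

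For the concavity of $\phi$ I would argue as follows. Write $q:=1/p\in(0,1)$ and note $\phi(a,b)=(a^{q}+b^{q})^{1/q}=:N_q(a,b)$. The function $N_q$ is positively homogeneous of degree one, $N_q(\lambda a,\lambda b)=\lambda N_q(a,b)$ for $\lambda>0$, and it is superadditive: for $a_1,a_2,b_1,b_2>0$,
\[
N_q(a_1+a_2,\,b_1+b_2)\ \ge\ N_q(a_1,b_1)+N_q(a_2,b_2),
\]
which is precisely the reverse Minkowski inequality with exponent $q\in(0,1)$ applied to two‑component vectors. A positively homogeneous superadditive function is concave: for $t\in[0,1]$ and points $x,y$ in the positive quadrant,
\[
N_q\big(tx+(1-t)y\big)\ \ge\ N_q(tx)+N_q\big((1-t)y\big)\ =\ tN_q(x)+(1-t)N_q(y).
\]
Hence $\phi$, and therefore $Q_p$, is concave on the positive orthant. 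Alternatively, one can verify concavity of $\phi$ by a direct Hessian computation: a short calculation shows that the diagonal entries of the Hessian of $\phi$ are strictly negative while its determinant vanishes (the radial direction lies in the kernel, by homogeneity), so the Hessian is negative semidefinite.

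The only step needing a little care is the reduction: checking that a sum of concave functions, each depending on its own pair of coordinates, is concave on $\R^N$, and treating (or dismissing) the diagonal terms separately. This is routine. The reverse Minkowski inequality for $q\in(0,1)$ is classical, but if a self‑contained treatment is preferred, the elementary Hessian computation for the two‑variable $\phi$ can be written out in full.
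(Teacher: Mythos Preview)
Your proof is correct and follows the same reduction as the paper: both decompose $Q_p$ into a nonnegative combination of copies of the two-variable function $\phi(a,b)=(a^{1/p}+b^{1/p})^{p}$ and then verify that $\phi$ is concave on the positive quadrant. The paper checks this last step by the direct Hessian computation you mention as your alternative, while your primary argument via homogeneity plus the reverse Minkowski inequality is a clean variant yielding the same conclusion.
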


\begin{proof} By the definition of $Q_p,$ w.l.o.g. it suffices to show that $$F(x):=\left(x_1^{\frac1p}+x_2^{\frac1p}\right)^{p}$$ is concave on $\R^N_+$, where $x=(x_1,x_2,\cdots,x_N).$ Direct computation shows that
  \begin{eqnarray*}
    &&\frac{\partial^2F}{\partial x_1^2}=-\frac{p-1}{p}\left(x_1^{\frac1p}+x_2^{\frac1p}\right)^{p-2}x_1^{\frac1p-2}x_2^{\frac1p}\leq 0,\\
    &&\frac{\partial^2F}{\partial x_2^2}=-\frac{p-1}{p}\left(x_1^{\frac1p}+x_2^{\frac1p}\right)^{p-2} x_1^{\frac1p}x_2^{\frac1p-2}\leq 0,\\
    &&\frac{\partial^2F}{\partial x_1^2}\frac{\partial^2F}{\partial x_2^2}-\left(\frac{\partial^2F}{\partial x_1\partial x_2}\right)^2=0,\\
    &&\frac{\partial^2F}{\partial x_i\partial x_j}=0,\quad\ i\neq1,2\ \mathrm{or}\ j\neq 1,2.
  \end{eqnarray*} This yields the concavity of $F.$
\end{proof}
The following lemma is well-known.
\begin{lemma}\label{lem:maximal values}
  Let $\Omega\subset \R^n$ be a precompact domain and $H:\Omega\to \R$ be a $C^1$ concave function. Then any critical point of $H$ in $\Omega$ attains the maximum of $H$ over $\Omega.$ That is, if $y\in \Omega$ such that the differential of $H$ vanishes at $y,$ then $H(y)=\max\limits_{x\in \Omega}H(x).$
\end{lemma}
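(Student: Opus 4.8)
The plan is to deduce this from the classical first-order (``tangent plane'') characterization of concavity, working --- as in all our intended applications, where the relevant regions $\R^N_+$, $A_1$ and $P_1$ are convex --- under the (implicit) assumption that $\Omega$ is convex, without which concavity of $H$ on $\Omega$ is not even meaningful. Fix a critical point $y\in\Omega$, so $\nabla H(y)=0$, and an arbitrary $x\in\Omega$.

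First I would establish the supporting inequality at $y$. Since $\Omega$ is convex, the segment $y+t(x-y)$, $t\in[0,1]$, lies in $\Omega$; concavity of $H$ along it gives, for $t\in(0,1]$,
\[
H\bigl(y+t(x-y)\bigr)\ \ge\ (1-t)\,H(y)+t\,H(x),
\qquad\text{i.e.}\qquad
\frac{H\bigl(y+t(x-y)\bigr)-H(y)}{t}\ \ge\ H(x)-H(y).
\]
Letting $t\to 0^+$ and using $H\in C^1$, the left-hand side converges to the directional derivative $\nabla H(y)\cdot(x-y)$, so
\[
\nabla H(y)\cdot(x-y)\ \ge\ H(x)-H(y).
\]
Next, since $y$ is a critical point, $\nabla H(y)=0$, so the displayed inequality collapses to $H(x)\le H(y)$. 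As $x$ ranges over $\Omega$, this shows $H(y)=\sup_{x\in\Omega}H(x)$, a supremum which is attained at $y\in\Omega$ itself and hence equals $\max_{x\in\Omega}H(x)$; note that precompactness of $\Omega$ is not actually needed for this conclusion, once the tangent-plane bound is in hand.

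I do not anticipate a genuine obstacle here: the entire content is the elementary fact that a $C^1$ concave function lies below each of its tangent hyperplanes, so that a vanishing gradient forces a global maximum. The only point deserving a word of care is that concavity is invoked along line segments contained in $\Omega$, which is why convexity of the domain is the natural hypothesis; in our usage (Lemma~\ref{lem:concavity} and the variational problems of \eqref{eq:critical point Ep}, \eqref{e:reformulation}) this is always satisfied, so nothing further is required.
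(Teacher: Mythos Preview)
Your argument is correct: the first-order (tangent-plane) inequality for $C^1$ concave functions on a convex domain immediately gives $H(x)\le H(y)+\nabla H(y)\cdot(x-y)=H(y)$ at a critical point $y$, and your remark that convexity of $\Omega$ is the implicit hypothesis (satisfied in the applications to $P_1$) is well placed. The paper itself does not supply a proof --- it simply records the lemma as ``well-known'' --- so there is no alternative approach to compare against; your write-up is exactly the standard justification one would expect.
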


\begin{thm}\label{pro-for-max-eigen}
Let $G=(V,E,\nu,\mu)$ be a weighted graph and $\Omega$ be a finite  connected bipartite subgraph of $G$. Assume $f$ is an eigenfunction of Dirichlet $p$-Laplacian on $\Omega$. If $f$ satisfies $f(x)f(y)<0$ for $x\sim y$ and $x,y\in \Omega$, then $f$ is a maximum eigenfunction.
\end{thm}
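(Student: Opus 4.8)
The plan is to reduce the statement to the concavity machinery built up in Lemmas~\ref{Equi-12}--\ref{lem:maximal values}. Suppose $f$ is an eigenfunction of the Dirichlet $p$-Laplacian on $\Omega$ with $f(x)f(y)<0$ whenever $x\sim y$, $x,y\in\Omega$. After scaling we may assume $\|f\|_{p,\Omega}=1$, and after multiplying by $-1$ if necessary we may assume $f>0$ on the part $V_1$ and $f<0$ on the part $V_2$; in other words, viewed as a vector in $\R^{N+b}$, $f$ lies in the set $S_1'$ of Lemma~\ref{Equi-12}. By that lemma the image $h:=\bar S(f)\in A_1$ is a critical point of $K_p$ with the same critical value, namely $\lambda_{p}(f)=E_p(f)/\|f\|_{p,\Omega}^p$; by Lemma~\ref{Equivalence} the image $g:=T(h)=h^p\in P_1$ is then a critical point of $Q_p$ with the same critical value.

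Next I would invoke concavity. By Lemma~\ref{lem:concavity}, $Q_p$ is concave on $\R^N_+$, and its restriction to the affine constraint set $P_1$ (the intersection of the positive cone with the hyperplane $\sum g_i\nu_i=1$) is therefore a $C^1$ concave function on a relatively open convex subset of that hyperplane. Applying Lemma~\ref{lem:maximal values} (after passing to a precompact subdomain, or noting that the critical value equals the supremum over the closure) the critical point $g$ attains the maximum of $Q_p$ over $P_1$. Hence the critical value $\lambda_p(f)$ is the largest critical value of $Q_p$ on $P_1$, which by the chain of equivalences is the largest critical value of $E_p$ restricted to $S_1'$.

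It remains to argue that this maximum over $S_1'$ is actually the global maximum eigenvalue $\lambda_{m,p}(\Omega)$, i.e.\ that the maximum eigenfunction can be taken in $S_1'$. This is exactly Proposition~\ref{opp-sign}: any maximum eigenfunction $f_{\max}$ satisfies $f_{\max}(x)f_{\max}(y)<0$ for $x\sim y$, so (up to sign and scaling) $f_{\max}\in S_1'$, and thus $\lambda_{m,p}(\Omega)$ is among the critical values of $E_p|_{S_1'}$. Combining, $\lambda_p(f)=\max_{S_1'}(\text{critical values of }E_p)\geq$ the critical value attained by $f_{\max}$, which is $\lambda_{m,p}(\Omega)$; since trivially $\lambda_p(f)\leq\lambda_{m,p}(\Omega)$, we get $\lambda_p(f)=\lambda_{m,p}(\Omega)$, so $f$ is a maximum eigenfunction.

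The main obstacle is the bookkeeping around Lemma~\ref{lem:maximal values}: $P_1$ is relatively open, not compact, so one must either restrict to a precompact subdomain containing $g$ on which $Q_p$ stays below its value at $g$ (using that $Q_p$ extends continuously to the closed simplex and a concave function cannot exceed a critical value at a boundary unless it is constant along that direction), or verify directly that the critical value of a concave function on a convex set is its supremum. One should also double-check the transfer of the constraint manifold structure through $\bar S$ and $T$ so that "critical point" is unambiguous at the boundary behaviour — but the equivalences in Lemmas~\ref{Equi-12} and \ref{Equivalence} were set up precisely for this, so the remaining work is routine.
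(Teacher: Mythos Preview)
Your proposal is correct and follows essentially the same route as the paper: reduce via Lemmas~\ref{Equi-12} and \ref{Equivalence} to a critical point of $Q_p$ on $P_1$, invoke concavity (Lemma~\ref{lem:concavity}) and Lemma~\ref{lem:maximal values} to conclude the critical value is the maximum of $Q_p$ over $P_1$, and then translate back. The paper's proof stops at ``$\max_{S_1'}E_p=\max_{P_1}Q_p$, hence $f$ is a maximum eigenfunction,'' leaving implicit the step you make explicit---namely, that by Proposition~\ref{opp-sign} the maximum of $E_p$ over all of $S_1^p$ is already attained on $S_1'$, so $\max_{S_1'}E_p=\lambda_{m,p}(\Omega)$. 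Your added paragraph closes that gap cleanly; otherwise the two arguments coincide. Your caveat about $P_1$ being only relatively open in Lemma~\ref{lem:maximal values} is a fair bookkeeping point that the paper also elides, and your suggested fix (concave functions attain their supremum at any interior critical point) is the right one.
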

\begin{proof}
Without loss of generality, we assume $\|f\|_{p,\Omega}=1$ and  $f>0$ on $V_1$, then $\bar{f}\in S_1'$. Since $f$ is the eigenfunction pertaining to eigenvalue $\lambda$ of Dirichlet $p$-Laplacian, then $\bar{f}$ is the critical point of $E_p$ on $S_1^p$ pertaining to the critical value $\lambda$. By Lemma \ref{Equi-12} and Lemma
\ref{Equivalence}, we obtain $g=h^p$ with $h=\bar{S}(\bar{f})$ is the
critical point of $Q_p$ with the Lagrange constant $\lambda$.
By Lemma \ref{lem:concavity}, $Q_p$ is concave in $\R_+^N.$ By the restriction, $Q_p$ is concave on the affine subset $P_1$ on which the variational problem \eqref{e:reformulation} defined. Then Lemma \ref{lem:maximal values} yields that $Q_p$ attains the maximum at $g=(\bar{S}(\bar{f}))^p\in P_1.$
Since
$E_p(\bar{f})=Q_p\left((\bar{S}(\bar{f}))^p\right)$, and $T$ and $\bar{S}$ are invertible maps, then
\[
\max\limits_{f\in S_1'}E_p(\bar{f})
=\max\limits_{f\in S_1'}Q_p\left((\bar{S}(\bar{f}))^p\right)
=\max\limits_{g\in P_1}Q_p(g).
\]
Hence, $f$ is a maximum eigenfunction of $E_p$.
\end{proof}

\begin{lemma}\label{maximum-unique}
Let $G=(V,E,\nu,\mu)$ be a weighted graph and $\Omega$ be a finite  connected bipartite subgraph of $G$. Then
the maximum eigenfunction is unique (up to multiplication with constants).
\end{lemma}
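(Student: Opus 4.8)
The plan is to mimic the proof of Lemma~\ref{one-multi} (the uniqueness of the first eigenfunction), replacing the positive combination of eigenfunctions by an $\ell^p$-combination that respects the alternating sign pattern on a bipartite subgraph. Let $f_1,f_2$ be two maximum eigenfunctions of the Dirichlet $p$-Laplacian on $\Omega$, with bipartite parts $V_1,V_2$. By Proposition~\ref{opp-sign}, each $f_i$ satisfies $f_i(x)f_i(y)<0$ for adjacent $x,y\in\Omega$; in particular (using connectedness of $\Omega$) $f_i$ has no zero on $\Omega$, so after multiplying each $f_i$ by a suitable nonzero constant we may assume $\|f_i\|_{p,\Omega}=1$ and $f_i>0$ on $V_1$, $f_i<0$ on $V_2$. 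By the Rayleigh quotient \eqref{Rayleigh quotient}, $E_p(f_i)=\lambda_{m,p}$ for $i=1,2$.

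Next I would introduce $f\in\R^\Omega$ defined by $f(v):=\big(|f_1(v)|^p+|f_2(v)|^p\big)^{1/p}$ for $v\in V_1$ and $f(v):=-\big(|f_1(v)|^p+|f_2(v)|^p\big)^{1/p}$ for $v\in V_2$. Then $f$ again alternates in sign across every edge of $\Omega$, and $\|f\|_{p,\Omega}^p=\|f_1\|_{p,\Omega}^p+\|f_2\|_{p,\Omega}^p=2$. For each edge $\{x,y\}\in E$ set $a=(|\bar f_1(x)|,|\bar f_2(x)|)$ and $b=(|\bar f_1(y)|,|\bar f_2(y)|)$ in $\R_{\ge0}^2$. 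Since $f_i$ alternates sign across edges of $\Omega$ and vanishes off $\Omega$, we have $|\bar f_i(x)-\bar f_i(y)|=|\bar f_i(x)|+|\bar f_i(y)|$ for every edge (trivially so when an endpoint lies in $V\setminus\Omega$), and likewise $|\bar f(x)-\bar f(y)|=\|a\|_{p}+\|b\|_{p}$, the $\ell^p$-norm on $\R^2$. The triangle inequality $\|a+b\|_p\le\|a\|_p+\|b\|_p$ then gives the edgewise bound
\[
|\bar f(x)-\bar f(y)|^p=(\|a\|_p+\|b\|_p)^p\ \ge\ \|a+b\|_p^p=|\bar f_1(x)-\bar f_1(y)|^p+|\bar f_2(x)-\bar f_2(y)|^p .
\]
Summing against $\tfrac12\mu_{xy}$ yields $E_p(f)\ge E_p(f_1)+E_p(f_2)=2\lambda_{m,p}$. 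On the other hand $E_p(f)\le\lambda_{m,p}\|f\|_{p,\Omega}^p=2\lambda_{m,p}$ by \eqref{eq:Rayleigh}. Hence every inequality above must be an equality.

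Finally I would extract the equality case, which is the crux of the argument. For each edge $\{x,y\}$ with $x,y\in\Omega$ we now have $\|a+b\|_p=\|a\|_p+\|b\|_p$ where $a,b$ have strictly positive entries (no zeros, by Proposition~\ref{opp-sign}); by the strict convexity of the $\ell^p$-norm on $\R^2$ for $1<p<\infty$ this forces $a$ and $b$ to be positively proportional, i.e. $|f_1(x)|/|f_1(y)|=|f_2(x)|/|f_2(y)|$. Thus $|f_1|/|f_2|$ is constant along every edge of $\Omega$, hence constant on $\Omega$ by connectedness, so $|f_1|=c|f_2|$ for some $c>0$; together with the common sign pattern on $V_1,V_2$ this gives $f_1=cf_2$ on $\Omega$. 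Undoing the normalizations, any two maximum eigenfunctions differ by a nonzero scalar multiple, which is the assertion of the lemma. The main point — exactly as in Lemma~\ref{one-multi} — is the passage from the edgewise Minkowski inequality to its equality case; the only additional bookkeeping is tracking the alternating signs on the parts $V_1,V_2$ and the edges meeting $V\setminus\Omega$.
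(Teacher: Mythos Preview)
Your proof is correct and is essentially the paper's own argument: the paper applies the involution $S$ to pass to positive functions $h_i=S(f_i)$, forms $h=(h_1^p+h_2^p)^{1/p}$, and uses the Minkowski inequality $(|U|_p+|V|_p)^p\ge|U+V|_p^p$ together with its equality case, which is exactly your edgewise bound after tracking the alternating signs by hand. The only cosmetic difference is that you work directly with the signed $f_i$ on $V_1,V_2$ instead of first conjugating by $S$.
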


\begin{proof}
It suffices to prove that for any two maximum eigenfunctions $u_m$ and $u_m'$, there is a constant $c\in \R,$ such that $u_m=cu_m'.$

Let $h_1=S(u_m)$ and $h_2=S(u_m').$ It suffices to show that $h_1=ch_2.$ By Proposition \ref{opp-sign}, w.l.o.g., we may assume that $h_1,h_2>0$ on $\Omega$ and $\|h_1\|_p=\|h_2\|_p=1.$ Set a new function $h:=(h_1^p+h_2^p)^{1/p}$ and $u=S^{-1}(h)$. Then  $\|h\|_{p,\Omega}^p=\|u\|_{p,\Omega}^p=2$, and
\begin{align}\begin{split}\label{maximu-bdd-lower}
  2\lambda_{m,p}=\|u\|_{p,\Omega}^p\lambda_{m,p}
  &\geq  \frac{1}{2}\sum\limits_{x,y\in V}|\bar{u}(y)-\bar{u}(x)|^p\mu_{xy} \\
 &=  \frac{1}{2}\sum\limits_{x,y\in V}|\bar{h}(y)+\bar{h}(x)|^p\mu_{xy}. \end{split}\end{align}
For any $x,y\in V$ and $x\sim y$, we claim that
\[
|\bar{h}(y)+\bar{h}(x)|^p\geq |\bar{h}_1(y)+\bar{h}_1(x)|^p+|\bar{h}_2(y)+\bar{h}_2(x)|^p\]
and the equality holds if and only if $\frac{h_1(x)}{h_2(x)}=\frac{h_1(y)}{h_2(y)}.$
Note that if $x\in\delta\Omega$ or $y\in\delta\Omega$, then the equality holds. For $x,y\in\Omega$ and $x\sim y$, we define two vectors in the $\ell^p$ space $(\R^2,|\cdot|_p),$ $U=(h_1(x),h_2(x))$ and $V=(h_1(y),h_2(y)).$
Hence \begin{eqnarray*}|h(y)+h(x)|^p&=&(|U|_p+|V|_p)^p\\
&\geq& |U+V|_p^p=|h_1(y)+h_1(x)|^p+|h_2(y)+h_2(x)|^p.
\end{eqnarray*} The equality holds if and only if $U=cV$ with $c>0$. This proves the claim.
Hence, \begin{align}\begin{split}\label{maxi-bdd-up}
&\frac{1}{2}\sum\limits_{x,y\in V}|\bar{h}(y)+\bar{h}(x)|^p\mu_{xy}\\
&\geq\frac{1}{2}\sum\limits_{x,y\in V}|\bar{h}_1(y)+\bar{h}_1(x)|^p\mu_{xy}
  +\frac{1}{2}\sum\limits_{x,y\in V}|\bar{h}_2(y)+\bar{h}_2(x)|^p\mu_{xy}\\ &  = 2\lambda_{m,p}.
\end{split}\end{align}
All the inequalities have to be equalities.
By the claim above and the connectedness of $\Omega$, we have $h_1=c h_2.$ This proves the lemma.
\end{proof}
\begin{proof}[Proof of Theorem \ref{maxi-funct}]
Combining Theorem \ref{pro-for-max-eigen} with Lemma \ref{maximum-unique}, we obtain Theorem \ref{maxi-funct}.
\end{proof}
\section{Cheeger constants on symmetry  graphs}
In this section, we use the uniqueness property of first Dirichlet eigenfunctions to simplify the calculation of the Cheeger constant of a symmetric graph. Let $\Omega$ be a subgraph of a weighted graph $G=(V,E,\nu,\mu)$. Recall that the Cheeger constant $h_{\mu,\nu}(\Omega)$ of $\Omega$ is defined as in (\ref{eq:Cheegerconstant}).
For a finite subset $\Omega,$ the infimum can be attained by some subsets $U$ which we call the Cheeger cuts.
The following example shows that the Cheeger cuts are usually not unique, see also \cite{Chang-Shao-Zhang17a}.
\begin{figure}[htbp]
 \begin{center}
   \begin{tikzpicture}
    \node at (0,0){\includegraphics[width=0.6\linewidth]{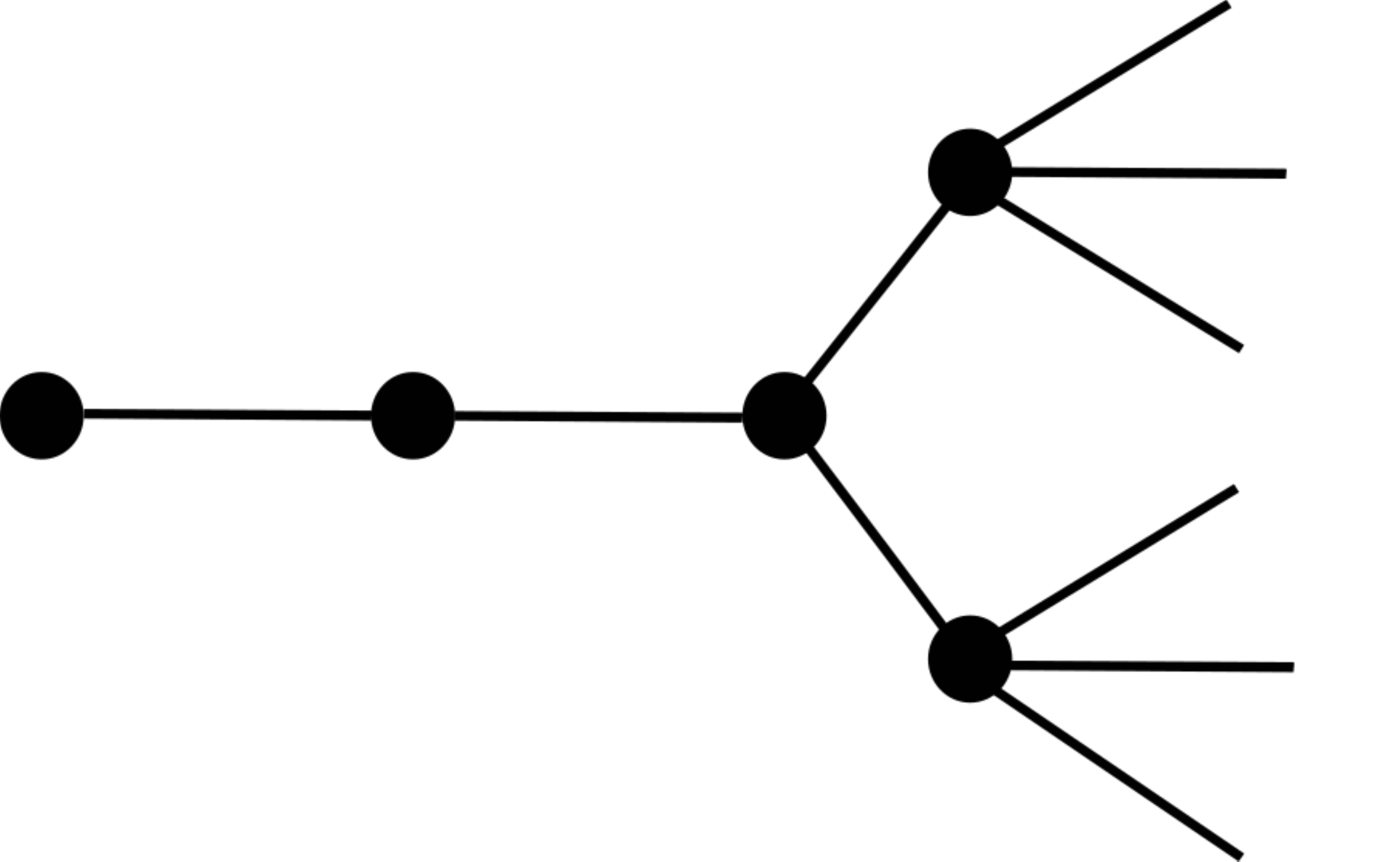}};
    \node at (-3.6, -.5){\Large $v_1$};
    \node at (-1.6, -.5){\Large $v_2$};
    \node at (0.3, -.5){\Large $v_3$};
    \node at (1.5, 0.8){\Large $v_4$};
    \node at (1.5,   -1.8){\Large $v_5$};
    \end{tikzpicture}
  \caption{\small }
\label{fig2}
 \end{center}
\end{figure}
\begin{example}\label{cut-non-unique}
Let $G=(V,E,\nu,\mu)$ be a weighted graph with $\mu_{v_iv_j}=1$ for $v_i\sim v_j$ and $v_i,v_j\in V$,  and $\nu_{v_i}=\sum\limits_{v_j\in V, v_i\sim v_j}\mu_{v_iv_j}$.
For a subgraph of $G$, $\Omega=\{v_1,v_2,v_3,v_4,v_5\}$, as shown in Figure \ref{fig2}, 
\[
h_{\mu,\nu}(\Omega)=\frac{|\partial K_1|_\mu}{|K_1|_\nu}=\frac{|\partial K_2|_\mu}{|K_2|_\nu},
\]
where $K_1=\{v_1,v_2,v_3\}$ and $K_2=\{v_1, v_2\}$.
\end{example}

\subsection{Cheeger constants of symmetric subgraph}
Let $\Omega$ be a possibly infinite subgraph of $G$ and $\Omega^c$ be the complement of
$\Omega$. For the Dirichlet problem on $\Omega,$
it will be convenient to consider a simplified model
$\Omega\cup \{\underline{o}\},$ in which we identify
$\Omega^c$ as a single point $\{\underline{o}\}.$
We embed $\Omega$ in $\Omega\cup \{\underline{o}\}$ with the same weights $\mu$ and $\nu.$
For any $x\in \Omega$ which connects to a vertex in $\Omega^c,$
i.e. there exists $y\in \Omega^c$ such that $x\sim y,$
we add a new edge $\{x,\underline{o}\}$ and put the weight
\[\mu_{x\underline{o}}=\sum_{y\sim x, y\in \Omega^c}\mu_{xy}.\]
The vertex measure $\nu$ at ${\underline{o}}$ is irrelevant to the Dirichlet problem,
and can be chosen arbitrarily, say $\nu_{\underline{o}}=1$.
It is ready to see that the Dirichlet $p$-Laplacian on $\Omega$ in $G$ is equivalent to the Dirichlet $p$-Laplacian on $\Omega$ in $\Omega\cup \{\underline{o}\}.$

A bijective map $g:\Omega\to \Omega$ is called $D$-automorphism(standing for D irichlet-automorphism)
of $\Omega$ if
\[\nu_{g(x)}=\nu_x,\ \mu_{g(x)g(y)}=\mu_{xy}, \ \mu_{g(x)\underline{o}}=\mu_{x\underline{o}},\ \forall
x,y\in \Omega.\]
The set of $D$-automorphism of $\Omega,$ denoted by $\Aut(\Omega)$,
forms a subgroup of the permutation group $S_N$ where $N$ is the cardinality of $\Omega.$
Given any subgroup $\Gamma$ of the $D$-automorphism group $\Aut(\Omega),$
we denote by $[x]$ the $\Gamma$-orbit of $x\in\Omega$ under the action of $\Gamma$.
When the group $\Gamma$ is clear in the context, we simply call a $\Gamma$-orbit an orbit. We say that a subgraph $\Omega$ is ``symmetric" if there is a subgroup $\Gamma$ of $\Aut(\Omega)$ acting on $\Omega$ finitely, i.e. each orbit for the action of the group $\Gamma$ consists of finitely many vertices. We denote the set of $\Gamma$-orbits in $\Omega$ by $\Omega/\Gamma$.
\begin{defi}\label{quotient-graph-def}
We call a graph $G(\Omega/\Gamma)=(V^\Gamma, E^\Gamma,\nu^\Gamma,\mu^\Gamma)$ with $V^\Gamma=\{[x]: x\in \Omega\}\cup \Omega^c$ a quotient graph on $\Omega$ of $G$ by $\Gamma$ if
\begin{enumerate}
\item
      \begin{align*}\nu_z^\Gamma=\begin{cases}
      \sum\limits_{x_1\in [x]}\nu_{x_1}, \ & z=[x],   x\in\Omega\\
      \nu_z,  & z\in\Omega^c;
      \end{cases}\end{align*}
 \item For $z,w \in V^\Gamma$,  \begin{align*}\mu_{zw}^\Gamma=\begin{cases}
      \sum\limits_{x_1\in [x],y_1\in [y]}\mu_{x_1y_1}, \ \ &z=[x],\ w=[y],\ x,y\in \Omega\\
      \sum\limits_{x_1\in [x]}\mu_{x_1w},  &z=[x],\ w\in\Omega^c, x\in\Omega\\
      \mu_{zw}, & z,w\in \Omega^c;
      \end{cases}\end{align*}
   \item $\{z,w\}\in E^\Gamma$ if and only if $\mu_{zw}>0$.
\end{enumerate}
If $\Omega=V$, then we write $G/\Gamma=G(\Omega/\Gamma)$ for simplicity.
\end{defi}
Note that in our definition, the quotient graph $G(\Omega/\Gamma)$ has no self-loops, although there could be edges between vertices in one orbit in $\Omega$.
For the Dirichlet problem of the quotient graph $G(\Omega/\Gamma)$ on $\Omega/\Gamma$,
we simply set $\Omega/\Gamma\cup\{\underline{o}\}$ with edge weights
$\mu_{[x]\underline{o}}=\sum_{x_1\in [x]}\mu_{x_1\underline{o}}.$

For any function $f$ defined on $\Omega/\Gamma,$
we have a natural lifted function $\hat{f}\in \R^\Omega$, $\hat{f}(x)=f([x])$ for any $x\in \Omega.$
We denote by $E_p^{\Omega/\Gamma}$ the $p$-Dirichlet energy of functions on the quotient graph $G(\Omega/\Gamma)$,
it easy to check that
\begin{equation}\label{e:energy relation}
E_p^{\Omega/\Gamma}(f)=E_p(\hat{f})\ \ \ \quad  \forall\ f\in \R^{\Omega/\Gamma},
\end{equation}
from the choice of weights.

Next we state the main result, which yields that we can treat the orbits as integrality in the computation of Cheeger constants for symmetric graphs.
\begin{thm}\label{cheeger-equ}
Let $\Omega$ be a finite connected subgraph of a weighted graph $G=(V,E,\nu,\mu)$ and $\Gamma$ be a subgroup of
$\Aut(\Omega)$ acts finitely. Then in the quotient graph $G(\Omega/\Gamma)$,
\begin{equation}
\label{e:quotient Cheeger}h_{\mu,\nu}(\Omega)=h_{\mu,\nu}\left(G(\Omega/\Gamma)\right).
\end{equation}
\end{thm}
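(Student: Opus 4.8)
The strategy is to prove the stronger statement that the first Dirichlet $p$-Laplacian eigenvalues agree,
\[
\lambda_{1,p}(\Omega)=\lambda_{1,p}\big(G(\Omega/\Gamma)\big)\qquad\text{for all }p\in(1,\infty),
\]
and then to let $p\to 1^+$ and invoke Proposition~\ref{c:1-Lap}, which identifies $\lambda_{1,1}$ with the Cheeger constant. Note that $G(\Omega/\Gamma)$, restricted to $\Omega/\Gamma$, is again a finite connected graph (the image of a connected graph under the orbit map is connected, skipping the intra-orbit steps), so Theorem~\ref{First-eigen} and Proposition~\ref{c:1-Lap} apply to it as well.

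For the inequality ``$\le$'' I would take an arbitrary $f\in\R^{\Omega/\Gamma}$, $f\not\equiv 0$, lift it to $\hat f\in\R^\Omega$ with $\hat f(x)=f([x])$, and use the energy identity \eqref{e:energy relation} together with the elementary identity $\|\hat f\|_{p,\Omega}=\|f\|_{p,\Omega/\Gamma}$ (both coming from $\nu^\Gamma_{[x]}=\sum_{x_1\in[x]}\nu_{x_1}$); the two Rayleigh quotients coincide, and taking the infimum over $f$ gives $\lambda_{1,p}(\Omega)\le\lambda_{1,p}(G(\Omega/\Gamma))$. For ``$\ge$'' I would use the uniqueness in Theorem~\ref{First-eigen}. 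Let $u$ be the positive first eigenfunction on $\Omega$ with $\|u\|_{p,\Omega}=1$. For $g\in\Gamma$ the function $u\circ g$ satisfies $\Delta_p(u\circ g)=(\Delta_p u)\circ g$, because $g$ preserves $\nu$, $\mu$ and the boundary weights $\mu_{x\underline{o}}$ of the Dirichlet model; hence $u\circ g$ is a positive eigenfunction with eigenvalue $\lambda_{1,p}(\Omega)$, so by Theorem~\ref{First-eigen} it is a first eigenfunction and $u\circ g=c_g u$ for some $c_g>0$. Since $g$ is $\nu$-measure preserving, $\|u\circ g\|_{p,\Omega}=\|u\|_{p,\Omega}=1$, forcing $c_g=1$; thus $u$ is constant on each $\Gamma$-orbit, i.e. $u=\hat f$ for some $f\in\R^{\Omega/\Gamma}$ with $f>0$. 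Feeding $f$ into the Rayleigh quotient of $G(\Omega/\Gamma)$ and using \eqref{e:energy relation} gives $\lambda_{1,p}(G(\Omega/\Gamma))\le E_p(u)/\|u\|_{p,\Omega}^p=\lambda_{1,p}(\Omega)$, which completes the equality for each $p>1$.

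Finally I would pass to the limit $p\to 1^+$. For any finite subset $\Omega'$ of a weighted graph, testing the Rayleigh quotient \eqref{eq:Rayleigh} with the indicator $\mathds{1}_U$ of a Cheeger cut of $\Omega'$ gives $\lambda_{1,p}(\Omega')\le h_{\mu,\nu}(\Omega')$ for all $p\ge 1$, since the $p$-th power of a $\{0,1\}$-valued difference equals the difference. For the matching lower bound, let $u_p>0$ be first eigenfunctions with $\|u_p\|_{p,\Omega'}=1$; from $u_p(x)^p\nu_x\le 1$ they are uniformly bounded, so along a sequence $p_k\downarrow 1$ realizing $\liminf_{p\to 1}\lambda_{1,p}(\Omega')$ one can extract $u_{p_k}\to u_*$ pointwise. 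Joint continuity of $(a,b)\mapsto a^b$ near $b=1$ yields $\|u_*\|_{1,\Omega'}=\lim_k\|u_{p_k}\|_{p_k,\Omega'}^{p_k}=1$ and $E_{p_k}(u_{p_k})\to E_1(u_*)$, so $\liminf_{p\to1}\lambda_{1,p}(\Omega')=E_1(u_*)\ge\lambda_{1,1}(\Omega')\|u_*\|_{1,\Omega'}=\lambda_{1,1}(\Omega')$; with the upper bound and Proposition~\ref{c:1-Lap} this gives $\lambda_{1,p}(\Omega')\to h_{\mu,\nu}(\Omega')$. Applying this to $\Omega$ and to $\Omega/\Gamma$ and combining with the eigenvalue equality of the previous paragraph proves \eqref{e:quotient Cheeger}.

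I expect the genuine difficulty to lie in this last step: one must justify the compactness extraction and the simultaneous limits in the exponent and in the function, and verify that \eqref{e:energy relation} and Proposition~\ref{c:1-Lap} are really available at $p=1$ on the quotient, whose Dirichlet model carries the auxiliary vertex $\underline{o}$. (One can slightly shorten the argument by noting that it suffices to know $u_p$ on $\Omega$ is $\Gamma$-invariant: then $u_*$ descends, $\liminf_p\lambda_{1,p}(\Omega)=E_1(u_*)\ge h_{\mu,\nu}(G(\Omega/\Gamma))$, which together with $\lambda_{1,p}(\Omega)\le h_{\mu,\nu}(\Omega)\le h_{\mu,\nu}(G(\Omega/\Gamma))$ — the latter by lifting a quotient Cheeger cut — closes the loop.) Everything else is routine bookkeeping with orbit weights plus the uniqueness statement in Theorem~\ref{First-eigen}, which is exactly what forces the $p$-eigenfunction to be $\Gamma$-symmetric.
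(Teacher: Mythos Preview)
Your proposal is correct and follows essentially the same route as the paper: the paper's Lemma~\ref{equivalence} uses exactly your uniqueness argument (Theorem~\ref{First-eigen}) to force $\Gamma$-invariance of the first eigenfunction and hence $\lambda_{1,p}(\Omega)=\lambda_{1,p}(\Omega/\Gamma)$, and then the proof of Theorem~\ref{cheeger-equ} passes to $p\to 1$ and invokes Proposition~\ref{c:1-Lap}. The only minor differences are that the paper concludes the descended function is a \emph{first} eigenfunction on the quotient by re-applying the sign characterization (rather than your direct Rayleigh-quotient inequality), and that the paper asserts $\lim_{p\to1}\lambda_{1,p}=\lambda_{1,1}$ in one line from \eqref{eq:Rayleigh}, whereas you supply the compactness details; your extra care there is welcome but not a departure in method.
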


Instead of combinatorial arguments, we use an analytic approach to prove this result.
The following lemma follows from the fact that
the first eigenvalue of $p$-Laplacian is simple for $p\in(1,\infty)$, i.e. the first eigenfunctions are of dimension one.
\begin{lemma}\label{equivalence}
  Let $\Omega$ be a finite connected subgraph of a weighted graph $G=(V,E,\nu,\mu)$ and $\Gamma$ be a subgroup of $\Aut(\Omega)$ acts finitely.
  Then for any $p\in (1,\infty),$ we have $$\lambda_{1,p}(\Omega)=\lambda_{1,p}(\Omega/\Gamma).$$
\end{lemma}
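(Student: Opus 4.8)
The plan is to prove both inequalities $\lambda_{1,p}(\Omega)\leq \lambda_{1,p}(\Omega/\Gamma)$ and $\lambda_{1,p}(\Omega)\geq \lambda_{1,p}(\Omega/\Gamma)$ separately, using the lifting relation \eqref{e:energy relation} together with the uniqueness of the first eigenfunction from Theorem \ref{First-eigen}. For the inequality $\lambda_{1,p}(\Omega)\leq \lambda_{1,p}(\Omega/\Gamma)$, I would take a first eigenfunction $f$ of the Dirichlet $p$-Laplacian on $\Omega/\Gamma$, consider its lift $\hat f\in\R^{\Omega}$, and observe that $\|\hat f\|_{p,\Omega}^p=\sum_{[x]}|f([x])|^p\nu^{\Gamma}_{[x]}=\|f\|_{p,\Omega/\Gamma}^p$ by the definition of $\nu^{\Gamma}$, while $E_p(\hat f)=E_p^{\Omega/\Gamma}(f)$ by \eqref{e:energy relation}; plugging $\hat f$ into the Rayleigh quotient characterization \eqref{eq:Rayleigh} for $\Omega$ gives the bound.

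For the reverse inequality, the key point is that the first eigenfunction $u$ of the Dirichlet $p$-Laplacian on $\Omega$ is, up to scaling, $\Gamma$-invariant. Indeed, by Theorem \ref{First-eigen} we may assume $u>0$ on $\Omega$ with $\|u\|_{p,\Omega}=1$. For any $g\in\Gamma$, the function $u\circ g$ is again a positive Dirichlet $p$-eigenfunction on $\Omega$ with the same eigenvalue $\lambda_{1,p}(\Omega)$ (this uses that $g$ is a $D$-automorphism, so it preserves $\nu$, $\mu$, and the weights $\mu_{x\underline o}$ to $\underline o$, hence commutes with $\Delta_p$ and preserves the Dirichlet boundary condition), and $\|u\circ g\|_{p,\Omega}=1$. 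By the uniqueness part of Theorem \ref{First-eigen}, $u\circ g=c_g u$ for some constant $c_g>0$; comparing norms forces $c_g=1$, so $u\circ g=u$ for all $g\in\Gamma$, i.e. $u$ is constant on each $\Gamma$-orbit. Therefore $u=\hat f$ for a well-defined function $f$ on $\Omega/\Gamma$, and running the norm and energy identities from the previous paragraph in reverse shows $f$ is admissible for the Rayleigh quotient on $\Omega/\Gamma$ with quotient value $\lambda_{1,p}(\Omega)$, whence $\lambda_{1,p}(\Omega/\Gamma)\leq\lambda_{1,p}(\Omega)$.

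The main obstacle, and the place I would be most careful, is the $\Gamma$-invariance step: one must verify that a $D$-automorphism $g$ of $\Omega$ genuinely induces a symmetry of the Dirichlet $p$-Laplacian, so that $u\circ g$ is an eigenfunction with the same eigenvalue — this is where the boundary bookkeeping via the identified point $\underline o$ matters, and it is exactly what the $D$-automorphism definition (preserving $\mu_{x\underline o}$) is designed to guarantee. A minor secondary point is checking that $\Omega/\Gamma$ is connected (inherited from connectedness of $\Omega$) so that Theorem \ref{First-eigen} applies on the quotient as well; this is immediate since the orbit map is a surjective graph morphism. Everything else is a direct substitution into \eqref{eq:Rayleigh} and \eqref{e:energy relation}.
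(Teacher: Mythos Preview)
Your proposal is correct and follows essentially the same route as the paper: the core step---using the uniqueness of the first eigenfunction (Theorem~\ref{First-eigen}) to show that $u\circ g=u$ for every $g\in\Gamma$, hence $u$ descends to the quotient---is identical. The only minor difference is in the conclusion: you prove both Rayleigh-quotient inequalities separately (lifting for one direction, descending for the other), whereas the paper verifies directly that the descended function $\underline{u}_1$ is an eigenfunction on $\Omega/\Gamma$ with a fixed sign and then invokes Theorem~\ref{First-eigen} once more to identify it as the first eigenfunction, obtaining equality in one stroke.
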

\begin{proof}
  Let $u_1$ be the first eigenfunction of $\Omega$ in $G.$
  Then by the symmetry, for any $D$-automorphism $T,$
  $u_1\circ T$ is also a first eigenfunction of $\Omega.$
  By Lemma \ref{one-multi}, there is $c>0$ such that $u_1=c u_1\circ T.$
  Note that
  \[
  \|u_1\|_{p}=\left(\sum_{x\in\Omega}|u_1(x)|^p\nu_x\right)^\frac{1}{p}
  =\left(\sum_{x\in\Omega}|u_1\circ T(x)|^p\nu_{T(x)}\right)^\frac{1}{p}
  =\|u_1\circ T\|_{p},
  \]
 then $c=1.$
  Hence,  $u_1=u_1\circ T$ for all $T\in \Gamma$.
  This means that $u_1$ is constant on each orbit $[x],$ $x\in \Omega.$
  This induces a function $\underline{u}_1$ on $\Omega/\Gamma$ and $\underline{u}_1$ has the same sign condition as $u_1$. 
  Obviously either $\hat{\underline{u}}_1=u_1$.
  By our setting of measure $\mu^\Gamma$ and $\nu^\Gamma$ for the quotient graph,  $\underline{u}_1$ is an eigenfunction of $\Omega/\Gamma.$ 
  By Theorem \ref{First-eigen}, either $u_1>0$ on $\Omega$ and $u_1<0$ on $\Omega$. Then either $\underline{u}_1>0$ on $G/\Gamma$ or  $\underline{u}_1<0$ on $G/\Gamma$.
  Theorem \ref{First-eigen} yields that $\underline{u}_1$ is the first eigenfunction of $G(\Omega/\Gamma)$.
  This proves the theorem.
\end{proof}
\begin{proof}[Proof of Theorem \ref{cheeger-equ}]
By the Rayleigh quotient characterization (\ref{Rayleigh quotient}), \[\lim\limits_{p\to 1}\lambda_{1,p}(\Omega)=\lambda_{1,1}(\Omega).\] Combining Lemma \ref{equivalence} with Proposition \ref{c:1-Lap}, we obtain Theorem \ref{cheeger-equ}.
\end{proof}

\subsection{Cheeger constants of infinite graphs}
Let $G$ be an infinite graph and $\{\Omega_i\}_{i=1}^{\infty}$ be an exhaustion of $G$, i.e. $\Omega_i$ are finite subsets, $\Omega_i\subset\Omega_{i+1}$ and $V=\cup_{i=1}^\infty \Omega_i.$

\begin{defi}
We define the bottom of the spectrum by
$$\lambda_{1,p}(G)=\lim_{i\to \infty}\lambda_{1,p}(\Omega_i),$$
and the bottom of the essential spectrum by
$$\lambda_{1,p}^{\ess}(G)=\lim_{i\to\infty}\lambda_{1,p}(V\setminus \Omega_i),$$
where \[\lambda_{1,p}(V\setminus\Omega_i)=\inf\limits_{f\in C_0(V\setminus \Omega_i), f\not\equiv  0  }\frac{\|\nabla \bar{f}\|^p_{p,E}}{\|f\|_{p,V\setminus\Omega_i}^p}\]
and $C_0(V\setminus \Omega_i):=\Big\{f\in \R^{V\setminus \Omega_i}\Big|\#\{x\in V\setminus \Omega_i|f(x)\neq 0\}<\infty\Big\}$.
\end{defi}

The Rayleigh quotient characterization implies that $\lambda_{1,p}(\Omega_2)\leq \lambda_{1,p}(\Omega_1)$ for $\Omega_1\subset\Omega_2,$
so $\lambda_{1,p}(G)$ and $\lambda_{1,p}^{\ess}(G)$ are independent of the choice of the exhaustion.
Similarly, we can define the Cheeger constant of an infinite graph and the Cheeger constant at infinity of an infinite graph as follows.

\begin{defi}\label{def:Cheegerinf}The Cheeger constant of an infinite graph is defined as
$$h(G)=\lim_{i\to \infty}h(\Omega_i),$$
and the Cheeger constant at infinity is defined as
$$h_{\infty}(G)=\lim_{i\to\infty}\lim_{j\to\infty}\lambda_{1,p}(\Omega_j\setminus \Omega_i).$$
\end{defi}
For normalized Laplacians, the Cheeger estimates for infinite graphs are well-known. As a consequence, $\lambda_{1,2}(G)=0$ if and only if $h(G)=0,$ see e.g. \cite{Fujiwara96}.
\begin{lemma}[\cite{Keller-Mugnolo16}] For a weighted graph $G=(V,E,\nu,\mu)$ with normalized $p$-Laplacian, $1< p<\infty,$ then
  $$2^{p-1}\left(\frac{h(G)}{p}\right)^{p}\leq \lambda_{1,p}(G)\leq h(G),$$
  $$2^{p-1}\left(\frac{h_{\infty}(G)}{p}\right)^{p} \leq \lambda_{1,p}^{\ess}(G) \leq h_{\infty}(G).$$
\end{lemma}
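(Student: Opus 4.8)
The plan is to prove each two-sided chain separately, deriving the upper bound from a test-function computation and the lower (Cheeger-type) bound from a co-area estimate. I will spell out the argument for $\lambda_{1,p}(G)$; the statement for $\lambda_{1,p}^{\ess}(G)$ follows by the same argument applied to the exhaustion $\{V\setminus\Omega_i\}_i$ in place of $\{\Omega_i\}_i$, using that every $f\in C_0(V\setminus\Omega_i)$ is supported in some $\Omega_j\setminus\Omega_i$, so that one works with $h(\Omega_j\setminus\Omega_i)$ and lets $j,i\to\infty$ to obtain $h_\infty(G)$. From the definition and the monotonicity $\lambda_{1,p}(\Omega_{i+1})\le\lambda_{1,p}(\Omega_i)$ one has $\lambda_{1,p}(G)=\inf\{\|\nabla\bar f\|_{p,E}^p/\|f\|_{p,V}^p:\ f\in C_0(V),\ f\not\equiv 0\}$, and likewise $h(G)=\inf\{|\partial U|_\mu/|U|_\nu:\ \emptyset\ne U\subset V\ \text{finite}\}$. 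For the upper bound, plug $\mathds{1}_U$ (finite $U$) into the Rayleigh quotient: since $|\mathds{1}_U(x)-\mathds{1}_U(y)|^p=|\mathds{1}_U(x)-\mathds{1}_U(y)|\in\{0,1\}$ we get $\|\nabla\overline{\mathds{1}_U}\|_{p,E}^p=|\partial U|_\mu$ and $\|\mathds{1}_U\|_{p,V}^p=|U|_\nu$, so $\lambda_{1,p}(G)\le|\partial U|_\mu/|U|_\nu$, and taking the infimum gives $\lambda_{1,p}(G)\le h(G)$.

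For the lower bound I would first reduce to $f\ge 0$: since $s\mapsto|s|$ is $1$-Lipschitz, $\|\nabla\overline{|f|}\|_{p,E}\le\|\nabla\bar f\|_{p,E}$, while $\|\,|f|\,\|_{p,V}=\|f\|_{p,V}$; hence it suffices to treat $f\ge 0$, and after scaling $\|f\|_{p,V}=1$ (with $f$ finitely supported). Applying the co-area (layer-cake) identity from the proof of Proposition~\ref{c:1-Lap} to $g:=f^p$ and using $|\partial\{g>t\}|_\mu\ge h(G)\,|\{g>t\}|_\nu$ for each finite superlevel set, integration over $t\ge 0$ yields
\[
\sum_{\{x,y\}\in E}\bigl|f(x)^p-f(y)^p\bigr|\mu_{xy}\ =\ \int_0^\infty|\partial\{g>t\}|_\mu\,dt\ \ge\ h(G)\int_0^\infty|\{g>t\}|_\nu\,dt\ =\ h(G).
\]

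On an edge $\{x,y\}$ with $m:=\min\{f(x),f(y)\}<M:=\max\{f(x),f(y)\}$ one has the exact identity $\bigl|f(x)^p-f(y)^p\bigr|=p\,|f(x)-f(y)|\fint_m^M s^{p-1}\,ds$ (edges with $f(x)=f(y)$ contribute zero to the left side and are dropped), so H\"older's inequality over the edges, with weight $\mu$ and conjugate exponents $\tfrac{p}{p-1}$ and $p$, gives
\[
\sum_{\{x,y\}\in E}\bigl|f(x)^p-f(y)^p\bigr|\mu_{xy}\ \le\ p\,\Bigl(\sum_{\{x,y\}\in E}\bigl(\fint_m^M s^{p-1}\,ds\bigr)^{\frac{p}{p-1}}\mu_{xy}\Bigr)^{\frac{p-1}{p}}\|\nabla\bar f\|_{p,E}.
\]
The decisive estimate is that, by Jensen's (power-mean) inequality since $\tfrac{p}{p-1}\ge 1$, and then by the fact that the average of a convex function over an interval is at most the half-sum of its endpoint values,
\[
\bigl(\fint_m^M s^{p-1}\,ds\bigr)^{\frac{p}{p-1}}\ \le\ \fint_m^M s^{p}\,ds\ \le\ \frac{m^p+M^p}{2}\ =\ \frac{f(x)^p+f(y)^p}{2}.
\]
Summing against $\mu$ and using the normalization $\nu_x=\sum_{y}\mu_{xy}$ bounds the bracketed sum by $\tfrac12\sum_{x\in V}f(x)^p\nu_x=\tfrac12$, so the H\"older bound reads $\le p\,2^{-(p-1)/p}\|\nabla\bar f\|_{p,E}$. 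Comparing with the co-area bound gives $h(G)\le p\,2^{-(p-1)/p}\|\nabla\bar f\|_{p,E}$, i.e. $\|\nabla\bar f\|_{p,E}^p\ge 2^{p-1}(h(G)/p)^p$; the infimum over $f$ finishes the lower bound.

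I expect the convexity refinement to be the only real obstacle. The crude bound $\bigl|f(x)^p-f(y)^p\bigr|\le p\max\{f(x),f(y)\}^{p-1}|f(x)-f(y)|$ together with $\max\{f(x),f(y)\}^p\le f(x)^p+f(y)^p$ runs the same scheme but only yields the weaker constant $(h(G)/p)^p$, losing the factor $2^{p-1}$; recovering the sharp constant forces one to replace the pointwise maximum by the integral mean $\fint_m^M s^{p-1}\,ds$ and to use convexity of $s\mapsto s^p$ twice (for $p=2$ this is the classical manipulation $|f(x)^2-f(y)^2|=|f(x)-f(y)|(f(x)+f(y))$ with $(f(x)+f(y))^2\le 2(f(x)^2+f(y)^2)$, giving $h^2/2\le\lambda_{1,2}$). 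The remaining ingredients — the reduction to nonnegative finitely supported test functions, the co-area formula already established for Proposition~\ref{c:1-Lap}, and the limiting arguments built into the definitions of $\lambda_{1,p}^{\ess}$ and $h_\infty$ — are routine.
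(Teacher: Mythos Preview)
The paper does not give its own proof of this lemma: it is quoted verbatim from \cite{Keller-Mugnolo16} and immediately followed by the proof of Theorem~\ref{quotient-graph-Chee}, so there is nothing to compare against. Your argument is correct and is in fact the standard route to the Cheeger inequality for $p$-Laplacians: the upper bound by testing with $\mathds{1}_U$, and the lower bound by applying the co-area formula to $f^p$, writing $|f(x)^p-f(y)^p|=p\,|f(x)-f(y)|\fint_m^M s^{p-1}\,ds$, and using H\"older together with the Hermite--Hadamard bound $\fint_m^M s^p\,ds\le\tfrac12(m^p+M^p)$ to recover the factor $2^{p-1}$. One minor comment: when you sum $\tfrac12(f(x)^p+f(y)^p)\mu_{xy}$ you should state explicitly that you are enlarging the sum from edges with $f(x)\neq f(y)$ to all edges (the extra terms being nonnegative), since $\fint_m^M$ is undefined on the dropped edges; the identity $\sum_{\{x,y\}\in E}(f(x)^p+f(y)^p)\mu_{xy}=\sum_x f(x)^p\nu_x$ then uses the normalization $\nu_x=\sum_y\mu_{xy}$ exactly as you say. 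The passage to $\lambda_{1,p}^{\ess}$ and $h_\infty$ via the exhaustion is routine.
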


\begin{proof}[Proof of Theorem \ref{quotient-graph-Chee}]
On one hand, we claim that there exists an exhaustion $\{\Omega_i\}_{i=1}^\infty$ such that each $\Omega_i$ is connected and consists of vertices in a collection of $\Gamma$-orbits.
For $x_0\in V,$ choose a large constant $R>0$ such that $\{x\in V: x\in [x_0]\}\subset B_R(x_0).$ Note that any ball in a graph is connected since any vertex in the ball can be connected to the center of the ball by a path of finite length. We denote by $B_r([x_0])$ the ball of radius $r$ centered at $[x_0]$ in the quotient graph $G/\Gamma.$ Then for any $i\geq R,$ set $$\Omega_i:=\{x\in V: [x]\in B_i([x_0])\}.$$ It is easy to see that $\{\Omega_i\}_{i=1}^\infty$ is an exhaustion of $G$ and consists of vertices of some $\Gamma$-orbits. Note that for any $[x]\sim [y]$ in $G/\Gamma$ and any vertex $x'\in [x],$ there is a vertex $y'\in [y]$ such that $x'\sim y'$ in $G.$ For any $[x]\in B_i([x_0]),$ there is a path connecting $[x]$ and $[x_0],$ denoted by $$[x]\sim [x_N]\sim [x_{N-1}]\sim \cdots \sim [x_0], $$ where $[x_k]\in B_i([x_0]), 1\leq k \leq N.$ Hence for any $x'\in [x],$ there exists $x_j'\in [x_j]$ for $0\leq j\leq N$ such that $$x'\sim x_N'\sim x_{N-1}'\sim \cdots \sim x_0',$$ and $x_k'\in \Omega_i$, $0\leq k\leq N.$ Note that $x_0'\in [x_0]\subset B_R(x_0),$ there exists a path connecting $x_0'$ and $x_0$ in $B_R(x_0).$ Hence by $i\geq R,$ there is a path connecting $x'$ and $x_0$ in $\Omega_i.$ This proves the claim.

On the other hand, $\Gamma$ acts finitely on each $\Omega_i$ and $\Omega_j\setminus \Omega_i$ for any $i<j.$
  The theorem follows from Theorem \ref{cheeger-equ} and Definition \ref{def:Cheegerinf}.
\end{proof}

\section{Cheeger constants on spherically symmetric graphs}
For the combinatorial distance $d$, we denote the balls, spheres and annuli as follows.
For $x\in V,$ $0<r<R$,
\[
B_r(x):=\{y\in V: d(y,x)\leq r\},\ \ S_r(x):=\{y\in V: d(y,x)=r\},
\]
and
\[
A_{r,R}(x):=B_R(x)\setminus B_r(x)=\{y\in V: r<d(y,x)\leq R\}.
\]
Recall that a graph $G$ is \textbf{spherically symmetric} centered at $x_0 \in V $ if for any $x,y\in S_r(x_0)$ and $r\in \mathbb N\cup \{0\}$, there exits an automorphism of $G$ which leaves $x_0$ invariant and maps $x$ to $y$. Let $\Gamma$ be a subgroup of the automorphism group on the spherically symmetric graph $G$ centered at $x_0\in V$ such that $\Gamma x_0=x_0$ and $\Gamma$-orbits are exactly $\{S_r(x_0)\}_{r=0}^\infty.$ We call $\Gamma$ an associated automorphism subgroup. Hence, the set of quotient graph $G/\Gamma$ is given by
$\{S_r(x_0)\}_{r=0}^\infty.$


Next we introduce a ``one-dimensional" model graph for the quotient graph.
\begin{defi}\label{def:one-dimen}Let $\Z_{\geq 0}:=\N\cup\{0\}$ be nonnegative integers.
Set a graph based on $\Z_{\geq 0,}$ $L=(\underline{V},\underline{E}),$ where $\underline{V}=\Z_{\geq 0}$
and $\underline{E}=\{\{i,i+1\}: i\in \Z_{\geq 0}\}.$
The weight on $\underline{V}$ is given by
\[
\nu: i\mapsto\nu_i,
\ \ \textrm{for}\  i\in\underline{V},\ \nu_i\in \R_+,
\]
and the weight on $\underline{E}$ is given by
\[\mu:\{i,i+1\}\mapsto \mu_{i,i+1}, \ \ \textrm{for}\  \{i,i+1\}\in\underline{E}, \ \mu_{i,i+1}\in\R_+.\]\end{defi}
For convenience, we denote $\mu_i:=\mu_{i,i+1}$ for $i\in \Z_{\geq 0},$
and
\begin{equation}\label{ball-anu-model}
\underline{B}_r:=B_r(0),\ \ \underline{A}_{r,R}:=B_R(0)\setminus B_r(0).
\end{equation}
We call it a linear graph.
Furthermore, for any subset $U\subset \underline{V}$ and $F\subset \underline{E},$ we denote
$|U|_{\nu}:=\sum_{x\in U}\nu_x$ and $|F|_{\mu}:=\sum_{e\in E}\mu_e.$
We simply write $|\cdot|$ if the measure is evident.
For the graph $L,$ by setting $\mu_{-1}=0$ and $\underline{A}_{-1,r}=\underline{B}_r,$ we have
$$
|\partial \underline{A}_{k,r}|=\mu_{k}+\mu_r, \qquad
|\underline{A}_{k,r}|=\sum_{i=k+1}^r \nu_i,\ \ \forall\ 0\leq k+1\leq r,\ k,r\in\Z.
$$

For the model graphs, we have the following estimate.
\begin{satz}\label{p:line graph}
Let $L$ be a graph defined in Definition \ref{def:one-dimen} and $\underline{\Omega}$ be a connected subgraph of $L$.  Then
  \begin{enumerate}[(a)]
  \item $$h(\underline{\Omega})=\inf_{\underline{B}_r\subset \underline{\Omega}}\frac{|\partial \underline{B}_r|}{|\underline{B}_r|}\ \ \textrm{if} \ \ 0\in \underline{\Omega},$$
  \item $$h(\underline{\Omega})=\inf_{\underline{A}_{k,r}\subset \underline{\Omega}}\frac{|\partial \underline{A}_{k,r}|}{|\underline{A}_{k,r}|} \ \ \textrm{if}\ \ 0\notin \underline{\Omega},$$
  \end{enumerate}
  where the ball $\underline{B}_r$ and annulus $\underline{A}_{k,r}$ of $L$ are defined as (\ref{ball-anu-model}).
\end{satz}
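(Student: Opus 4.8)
The plan is to exploit the one-dimensional structure of $L$. Being connected, $\underline{\Omega}$ is an interval of $\Z_{\geq 0}$ (possibly a ray), and the balls $\underline{B}_r$ and the annuli $\underline{A}_{k,r}$ (with $k\ge 0$) are precisely the finite intervals that contain, respectively avoid, the vertex $0$. Recall that $h(\underline{\Omega})$ is the infimum of $|\partial U|_\mu/|U|_\nu$ over finite nonempty $U\subseteq\underline{\Omega}$ — a genuine minimum if $\underline{\Omega}$ is finite, and, if $\underline{\Omega}$ is infinite, the limit along an exhaustion, which by the monotonicity of $h$ under enlarging the domain is again this infimum. Since every ball and every annulus contained in $\underline{\Omega}$ is an admissible competitor, the inequalities ``$\le$'' in (a) and (b) are immediate, and the real task is the reverse one: to produce, for each finite nonempty $U\subseteq\underline{\Omega}$, a ball (case (a)) or an annulus (case (b)) $W\subseteq\underline{\Omega}$ with $|\partial W|_\mu/|W|_\nu\le|\partial U|_\mu/|U|_\nu$.

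The first step I would carry out is a reduction to the case where $U$ is a single interval. Write $U=I_1\sqcup\cdots\sqcup I_m$ as the disjoint union of its maximal subintervals, so that consecutive blocks are separated by at least one vertex. These gaps guarantee that the edge boundaries $\partial I_j$ — each consisting of the edges joining $I_j$ to its immediate neighbours, the convention $\mu_{-1}=0$ accounting for the block at $0$ — are pairwise disjoint and together exhaust $\partial U$. Hence $|\partial U|_\mu=\sum_j|\partial I_j|_\mu$ and $|U|_\nu=\sum_j|I_j|_\nu$, and the elementary inequality $\big(\sum_j p_j\big)\big/\big(\sum_j q_j\big)\ge\min_j (p_j/q_j)$ for positive reals lets me replace $U$ by a block $I_{j_0}\subseteq U\subseteq\underline{\Omega}$ realizing the smallest ratio. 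So from here on $U=\{a,\dots,b\}$ is an interval, with $|\partial U|_\mu=\mu_{a-1}+\mu_b$.

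It then remains to treat each case. For (b), since $0\notin\underline{\Omega}$ the interval $\underline{\Omega}$ lies in $\{1,2,\dots\}$, so $a\ge 1$ and $U=\{a,\dots,b\}=\underline{A}_{a-1,b}$ is itself an annulus contained in $\underline{\Omega}$, with $|\partial U|_\mu=\mu_{a-1}+\mu_b=|\partial\underline{A}_{a-1,b}|_\mu$; thus $W=U$ works. For (a), if $a=0$ then $U=\underline{B}_b$ is already a ball; if $a\ge 1$, I take $W=\underline{B}_b=\{0,1,\dots,b\}$, which lies in $\underline{\Omega}$ since $\underline{\Omega}$ is an interval containing both $0$ and $b$. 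Then $|\partial W|_\mu=\mu_b\le\mu_{a-1}+\mu_b=|\partial U|_\mu$ while $|W|_\nu=|U|_\nu+\sum_{i=0}^{a-1}\nu_i\ge|U|_\nu$, so $|\partial W|_\mu/|W|_\nu\le|\partial U|_\mu/|U|_\nu$. Combining with the trivial direction — and, when $\underline{\Omega}$ is infinite, passing to the limit along an interval exhaustion — yields both (a) and (b).

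I expect the proposition to be essentially routine, the substance being the rigidity of one-dimensional isoperimetry. The only points that call for care are the combinatorial identity $|\partial U|_\mu=\sum_j|\partial I_j|_\mu$ for the maximal-interval decomposition (one must check that no boundary edge is counted twice and that every boundary edge of $U$ belongs to exactly one block — an easy check on a path) and, in the infinite case, the identification of $h(\underline{\Omega})$ with the infimum of the ratios over finite intervals $W\subseteq\underline{\Omega}$. There is no analytic ingredient whatsoever.
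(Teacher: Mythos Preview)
Your proof is correct and follows essentially the same idea as the paper's: on a path, one may always replace a competitor $U$ by an interval (a ball if $0\in\underline{\Omega}$, an annulus otherwise) without increasing the isoperimetric ratio. The only mechanical difference is that the paper does this in one stroke---for any $U$ it passes directly to $\underline{B}_{R(U)}$ with $R(U)=\max U$ in case~(a), and to the convex hull $\underline{A}_{\min U-1,\,\max U}$ in case~(b), observing that the boundary can only shrink and the volume only grow---whereas you take a two-step route via the maximal-block decomposition and the mediant inequality before (in case~(a)) extending the chosen block down to $0$. Both arguments are equally elementary; yours spells out a little more detail where the paper writes ``it is easy to see.''
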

\begin{proof}
  $(a)$ For any $U\subset \underline{\Omega},$ denote $R(U):=\max_{i\in U} i.$
  Then it is easy to see that $B_{R(U)}\subset \underline{\Omega}$ and
  $$\frac{|\partial \underline{B}_{R(U)}|}{|\underline{B}_{R(U)}|}\leq \frac{|\partial U|}{|U|}.$$
  Hence the Cheeger constant can be calculated using only balls $\underline{B}_R,$ $R\geq 0.$ This proves the result.

  $(b)$ For any $U\subset \underline{\Omega},$
  denote $R(U):=\max\limits_{i\in U} i$ and $r(U):=\min\limits_{i\in U} (i+1).$
  This yields that $\underline{A}_{r(U),R(U)}\subset \Omega$ and
  $$\frac{|\partial \underline{A}_{r(U),R(U)}|}{|\underline{A}_{r(U),R(U)}|}\leq \frac{|\partial U|}{|U|}.$$
This proves the proposition.
\end{proof}
 For a spherically symmetric graph $G$ with the associated automorphism subgroup $\Gamma$, let $G/\Gamma$ be the quotient graph. Then it is easy to see that $L:=G/\Gamma$ is a linear graph defined in
 Definition \ref{def:one-dimen}.
\begin{lemma}\label{radial-symmetry}Let $G$ be a spherically symmetric graph centered at $x_0\in V$ with the associated subgroup $\Gamma$.
Then
\begin{equation*}\label{e:radial symmetry eq3}
h_{\infty}(L)=\liminf_{r\to \infty}\inf_{R\geq r+1}\frac{|\partial \underline{A}_{r,R}|}{|\underline{A}_{r,R}|},
\end{equation*}
where the ball $\underline{B}_r$ and annulus $\underline{A}_{k,r}$ of $L:=G/\Gamma$ are defined in (\ref{ball-anu-model}).
\end{lemma}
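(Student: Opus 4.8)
The plan is to evaluate $h_\infty(L)$ straight from its definition, using the exhaustion of $L$ by balls centered at the root $0$, and to recognize each resulting difference set as an annulus so that Proposition~\ref{p:line graph}(b) can be invoked. Concretely, I would fix the exhaustion $\underline{\Omega}_i:=\underline{B}_i=\{0,1,\dots,i\}$, so that $\underline{\Omega}_j\setminus\underline{\Omega}_i=\underline{A}_{i,j}=\{i+1,\dots,j\}$ whenever $j>i$. Since the bottom of the (essential) spectrum does not depend on the choice of exhaustion and, by Proposition~\ref{c:1-Lap}, $\lambda_{1,1}$ of a finite set coincides with its Cheeger constant, Definition~\ref{def:Cheegerinf} gives
\[
h_\infty(L)=\lim_{i\to\infty}\lim_{j\to\infty}h_{\mu,\nu}(\underline{A}_{i,j}).
\]
For every $i\geq 0$ and $j\geq i+1$ the set $\underline{A}_{i,j}$ is a connected subgraph of $L$ not containing the vertex $0$, so Proposition~\ref{p:line graph}(b) applies and yields
\[
h_{\mu,\nu}(\underline{A}_{i,j})=\inf_{\underline{A}_{k,R}\subseteq\underline{A}_{i,j}}\frac{|\partial\underline{A}_{k,R}|}{|\underline{A}_{k,R}|}=\inf_{\substack{k\geq i\\ k+1\leq R\leq j}}\frac{|\partial\underline{A}_{k,R}|}{|\underline{A}_{k,R}|},
\]
the last equality just recording that $\underline{A}_{k,R}=\{k+1,\dots,R\}\subseteq\{i+1,\dots,j\}$ is equivalent to $k\geq i$ and $k+1\leq R\leq j$.

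Next I would take the two limits in succession. For fixed $i$ the admissible family of annuli grows with $j$, so $h_{\mu,\nu}(\underline{A}_{i,j})$ is non-increasing in $j$ and therefore
\[
\lim_{j\to\infty}h_{\mu,\nu}(\underline{A}_{i,j})=\inf_{k\geq i}\ \inf_{R\geq k+1}\frac{|\partial\underline{A}_{k,R}|}{|\underline{A}_{k,R}|}.
\]
The right-hand side is non-decreasing in $i$, so letting $i\to\infty$ gives $\sup_{i}\inf_{k\geq i}\inf_{R\geq k+1}\frac{|\partial\underline{A}_{k,R}|}{|\underline{A}_{k,R}|}$, which by the very definition of the lower limit equals $\liminf_{k\to\infty}\inf_{R\geq k+1}\frac{|\partial\underline{A}_{k,R}|}{|\underline{A}_{k,R}|}$. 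Renaming $k$ by $r$ yields exactly the claimed formula for $h_\infty(L)$.

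I do not expect any genuine obstacle: the argument reduces entirely to Proposition~\ref{p:line graph}(b) together with the elementary monotonicity of the Cheeger constant under enlarging the domain and of an infimum under shrinking the index set. The only points that call for a little care are the index bookkeeping in the second display — correctly translating the inclusion $\underline{A}_{k,R}\subseteq\underline{A}_{i,j}$ into constraints on $k$ and $R$ — and checking beforehand that $\underline{A}_{i,j}$ is connected and avoids the vertex $0$, so that it is part (b), and not part (a), of Proposition~\ref{p:line graph} that is the relevant one.
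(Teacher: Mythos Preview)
Your proposal is correct and follows essentially the same approach as the paper: both apply Proposition~\ref{p:line graph}(b) to identify the Cheeger constant of the complement of a ball (respectively, of a finite annulus) as an infimum of ratios over annuli, and then pass to the limit. Your monotonicity argument for the two successive limits is a bit more direct than the paper's extraction of approximating sequences $s_l,t_l$, but the underlying idea is identical.
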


\begin{proof}
Applying Proposition \ref{p:line graph}, together with $\{\underline{B}_l\}_{l=0}^\infty$ is an exhaustion of $L$, we have
  \begin{align}\begin{split}\label{e:eq h1}
  h_{\infty}(L)=\lim_{l\to\infty}h(L\setminus \underline{B}_l)
  &=\lim_{l\to\infty}\inf_{\substack{s,t\geq l\\ t\geq s+1}}\frac{|\partial \underline{A}_{s,t}|}{|\underline{A}_{s,t}|}.
  \end{split}\end{align}
  For convenience, set $J:=\liminf\limits_{r\to \infty}\inf\limits_{R\geq r+1}\frac{|\partial \underline{A}_{r,R}|}{|\underline{A}_{r,R}|}.$
  Clearly, $h_{\infty}(G)\leq J.$ It suffices to prove $h_{\infty}(L)\geq J$.
  By \eqref{e:eq h1}, there exist sequences $\{s_l\}_{l=1}^\infty$ and $\{t_l\}_{l=1}^\infty$ such that $s_l,t_l\geq l,$ $t_l\geq s_l+1,$ and
  $$\lim_{l\to \infty}\frac{|\partial \underline{A}_{s_l,t_l}|}{|\underline{A}_{s_l,t_l}|}= h_{\infty}(L).$$
  Noting that
  $$\inf_{R\geq s_l+1}\frac{|\partial \underline{A}_{s_l,R}|}{|\underline{A}_{s_l,R}|}\leq\frac{|\partial \underline{A}_{s_l,t_l}|}{|\underline{A}_{s_l,t_l}|},\quad \forall\ l\in \N,$$
  we have
  \begin{eqnarray*}
    J&=&\liminf_{r\to \infty}\inf_{R\geq r+1}\frac{|\partial \underline{A}_{r,R}|}{| \underline{A}_{r,R}|}
    \leq \liminf_{l\to\infty}\inf_{R\geq s_l+1}\frac{|\partial \underline{A}_{s_l,R}|}{| \underline{A}_{s_l,R}|}\\
    &\leq&\liminf_{l\to\infty}\frac{|\partial \underline{A}_{s_l,t_l}|}{| \underline{A}_{s_l,t_l}|}=h_{\infty}(L).
  \end{eqnarray*}
We prove the lemma.
\end{proof}
For further computation of Cheeger constants, we state a useful lemma as follows.
\begin{lemma}\label{l:calculus lemma1}
Let $G$ be a spherically symmetric graph centered at $x_0\in V$ with the associated subgroup $\Gamma$. If the graph $G$ has infinite volume, i.e. $\lim\limits_{r\to \infty}|B_r(x_0)|= \infty,$ then
\[
h_\infty(L)=\liminf_{r\to\infty}\frac{|\partial \underline{B}_r|}{|\underline{B}_r|}.
\]
where the ball $\underline{B}_r$ of $L:=G/\Gamma$ are defined in (\ref{ball-anu-model}).
\end{lemma}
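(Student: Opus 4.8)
The plan is to compare the two quantities
$$
J_1:=\liminf_{r\to\infty}\frac{|\partial\underline B_r|}{|\underline B_r|},\qquad
J_2:=\liminf_{r\to\infty}\inf_{R\geq r+1}\frac{|\partial\underline A_{r,R}|}{|\underline A_{r,R}|}=h_\infty(L),
$$
where the last equality is Lemma~\ref{radial-symmetry}. One inequality is essentially free: taking $R\to\infty$ in the annulus $\underline A_{-1,R}=\underline B_R$ (recall $\mu_{-1}=0$, so $|\partial\underline B_R|=\mu_R$) shows that balls are a special case of annuli with inner radius $-1$, but since in the definition of $J_2$ the inner radius runs over $r\to\infty$ this does not immediately give $J_2\le J_1$; instead I would argue directly. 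The substance of the proof is that, when the volume is infinite, the optimal annuli may be replaced by balls.

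First I would record the elementary identity $|\partial\underline A_{r,R}|=\mu_r+\mu_R=|\partial\underline B_r|+|\partial\underline B_R|$ (using $\mu_{-1}=0$ to make $r=-1$ uniform) and $|\underline A_{r,R}|=|\underline B_R|-|\underline B_r|$. Then for any $-1\le r<R$,
$$
\frac{|\partial\underline A_{r,R}|}{|\underline A_{r,R}|}
=\frac{|\partial\underline B_r|+|\partial\underline B_R|}{|\underline B_R|-|\underline B_r|}.
$$
The mediant (Stern–Brocot) inequality: for positive numbers, $\frac{a+c}{b-d}$ with $b>d$ satisfies $\frac{a+c}{b-d}\ge \min\{\frac{a}{b'},\frac{c}{b'}\}$ type bounds — more precisely I would use that $\frac{a+c}{b-d}\ge \frac{c}{b}$ whenever $\frac{a}{b-d}\ge \frac{c}{b}\cdot\frac{d}{b-d}$, which is not automatic, so the clean route is: write
$$
\frac{|\partial\underline B_r|+|\partial\underline B_R|}{|\underline B_R|-|\underline B_r|}\ \ge\ \frac{|\partial\underline B_R|}{|\underline B_R|}
\quad\Longleftrightarrow\quad
|\partial\underline B_r|\,|\underline B_R|\ \ge\ |\partial\underline B_R|\,|\underline B_r| - |\underline B_R|\,|\partial\underline B_R| + \text{(nonneg.)},
$$
which again is not free. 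So instead the correct and standard manipulation is: since $|\underline B_R|-|\underline B_r|\le |\underline B_R|$ and $|\partial\underline B_r|+|\partial\underline B_R|\ge |\partial\underline B_R|$, we get the \emph{reverse} of what we want, hence the real work is in the other direction. Concretely I expect to prove $h_\infty(L)=J_2\le J_1$ trivially (balls are limits of annuli once we let the inner radius drift, using that $|\underline B_r|\to\infty$ makes $|\partial\underline B_r|/(|\underline B_R|-|\underline B_r|)$ controllable), and the hard inequality $J_2\ge J_1$, i.e. every annulus ratio is bounded below by a ball ratio in the limit.

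For the hard inequality I would take near-optimal annuli $\underline A_{s_l,t_l}$ with $s_l,t_l\to\infty$ realizing $h_\infty(L)$, and split:
$$
\frac{|\partial\underline B_{s_l}|+|\partial\underline B_{t_l}|}{|\underline B_{t_l}|-|\underline B_{s_l}|}
\ \ge\ \min\!\left\{\frac{|\partial\underline B_{t_l}|}{|\underline B_{t_l}|},\ \frac{|\partial\underline B_{s_l}|}{|\underline B_{s_l}|}\cdot\frac{|\underline B_{s_l}|}{|\underline B_{t_l}|-|\underline B_{s_l}|}\right\},
$$
and here the infinite-volume hypothesis enters decisively: if along a subsequence $|\underline B_{s_l}|/|\underline B_{t_l}|$ stays bounded away from $0$, then $t_l-s_l$ must be bounded relative to the growth and one extracts a ball of radius $t_l$ with comparable ratio; if $|\underline B_{s_l}|/|\underline B_{t_l}|\to 0$, then $|\underline B_{t_l}|-|\underline B_{s_l}|\sim|\underline B_{t_l}|$ and the annulus ratio is asymptotically at least $|\partial\underline B_{t_l}|/|\underline B_{t_l}|\ge J_1$. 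Either way $\liminf_l \frac{|\partial\underline A_{s_l,t_l}|}{|\underline A_{s_l,t_l}|}\ge J_1$, giving $h_\infty(L)\ge J_1$. The main obstacle is making the dichotomy above rigorous, in particular ruling out (via $|B_r(x_0)|\to\infty$) the degenerate possibility that a thin far-away annulus with small volume but small boundary beats every ball; the infinite-volume assumption is exactly what forbids $|\underline B_{s_l}|$ from being negligible while $|\partial\underline B_{s_l}|$ dominates, and I would phrase the argument as: $|\partial\underline B_{t_l}|/(|\underline B_{t_l}|-|\underline B_{s_l}|)\ge |\partial\underline B_{t_l}|/|\underline B_{t_l}|$ always, and the extra term $|\partial\underline B_{s_l}|/(|\underline B_{t_l}|-|\underline B_{s_l}|)\ge 0$ only helps, so in fact $\frac{|\partial\underline A_{s_l,t_l}|}{|\underline A_{s_l,t_l}|}\ge \frac{|\partial\underline B_{t_l}|}{|\underline B_{t_l}|}\ge J_1$ directly — this last observation, once verified, collapses the whole argument and shows the dichotomy is unnecessary, with infinite volume only needed to guarantee $|\underline B_{t_l}|<\infty$ is the right normalization and that $\liminf$ over $r$ of the ball ratios is attained in the limit $r\to\infty$ rather than at a finite stage.
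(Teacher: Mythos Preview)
Your proposal eventually lands on the same two elementary inequalities the paper uses, but the route is tangled and you have the easy and hard directions swapped.

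The inequality $J_2\ge J_1$ is the \emph{easy} one. Your final-paragraph observation
\[
\frac{|\partial\underline A_{k,R}|}{|\underline A_{k,R}|}=\frac{\mu_k+\mu_R}{|\underline B_R|-|\underline B_k|}\ \ge\ \frac{\mu_R}{|\underline B_R|}=\frac{|\partial\underline B_R|}{|\underline B_R|}
\]
(numerator only grows, denominator only shrinks) is exactly the paper's argument; taking $\inf_{R\ge k+1}$ and then $k\to\infty$ gives $J_2\ge J_1$ with no hypothesis whatsoever. The dichotomy on $|\underline B_{s_l}|/|\underline B_{t_l}|$ and the mediant discussion are red herrings, as you yourself realize at the end.

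The direction that actually consumes the infinite-volume hypothesis is $J_2\le J_1$, which you label ``trivial.'' The paper carries out precisely the idea you sketch in one clause: for fixed $k$,
\[
\inf_{R\ge k+1}\frac{|\partial\underline A_{k,R}|}{|\underline A_{k,R}|}\ \le\ \liminf_{R\to\infty}\frac{\mu_k+\mu_R}{|\underline B_R|-|\underline B_k|}\ =\ \liminf_{R\to\infty}\frac{\mu_R}{|\underline B_R|}=J_1,
\]
the middle equality holding because $|\underline B_R|\to\infty$ makes both $\mu_k$ in the numerator and $|\underline B_k|$ in the denominator negligible; then let $k\to\infty$. Your closing remark that infinite volume is needed in the $J_2\ge J_1$ direction (``to guarantee $|\underline B_{t_l}|<\infty$ is the right normalization'') is wrong --- it plays no role there.

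In short: correct ingredients and ultimately the paper's approach, but strip out the false starts and present just the two one-line inequalities above, with the infinite-volume hypothesis invoked only for $J_2\le J_1$.
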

\begin{proof}
By $L=G/\Gamma$, $L$ is a quotient graph of $G$. From Definition \ref{quotient-graph-def}, we have $|\underline{B}_r|=|B_r|$, then $\lim\limits_{r\to\infty}|\underline{B}_r|=\infty$.
For any $r\geq k+1,$
$$\frac{|\partial \underline{A}_{k,r}|}{|\underline{A}_{k,r}|}=\frac{|\partial \underline{B}_r|+|\partial \underline{B}_k|}{|\underline{B}_r|-|\underline{B}_k|}.$$
Hence for fixed $k,$
\begin{align}\begin{split}\label{annu-bdd-ball}
\inf_{r\geq k+1}\frac{|\partial \underline{A}_{k,r}|}{|\underline{A}_{k,r}|}
\leq \liminf_{r\to \infty}\frac{|\partial \underline{B}_r|+|\partial \underline{B}_k|}{|\underline{B}_r|-|\underline{B}_k|}
=\liminf_{r\to\infty}\frac{|\partial \underline{B}_r|}{|\underline{B}_r|},
\end{split}\end{align}
where we used$\lim\limits_{r\to \infty}|\underline{B}_r|= \infty$. By passing to the limit $k\to\infty,$
we prove $$\lim_{k\to\infty}\inf_{r\geq k+1}\frac{|\partial{\underline{A}_{k,r}}|}{|\underline{A}_{k,r}|}\leq\liminf_{r\to\infty}\frac{|\partial \underline{B}_r|}{|\underline{B}_r|}.$$

On the other hand, note that for any $r\geq k+1,$
$\frac{|\partial \underline{A}_{k,r}|}{|\underline{A}_{k,r}|}\geq \frac{|\partial \underline{B}_r|}{|\underline{B}_r|}.$
Hence,
\begin{align}\begin{split}\label{ball-bdd-annu}
\lim_{k\to\infty}\inf_{r\geq k+1}\frac{|\partial \underline{A}_{k,r}|}{|\underline{A}_{k,r}|}
\geq\lim_{k\to\infty}\inf_{r\geq k+1} \frac{|\partial \underline{B}_r|}{|\underline{B}_r|}
=\liminf_{r\to\infty} \frac{|\partial \underline{B}_r|}{|\underline{B}_r|}.
\end{split}\end{align}
Combining (\ref{annu-bdd-ball}) and (\ref{ball-bdd-annu}), we obtain
\[\lim_{k\to\infty}\inf_{r\geq k+1}\frac{|\partial{\underline{A}_{k,r}}|}{|\underline{A}_{k,r}|}=\liminf_{r\to\infty}\frac{|\partial \underline{B}_r|}{|\underline{B}_r|}.\]
Then the lemma follows from Lemma \ref{radial-symmetry}.
\end{proof}

\begin{proof}[Proof of Theorem \ref{t:spherically symmetric}]
  By Theorem \ref{quotient-graph-Chee} and $L=G/\Gamma$, we have
 $$h(G)=h(G/\Gamma)=h(L),\ \ \quad h_{\infty}(G)=h_{\infty}(G/\Gamma)=h_{\infty}(L).$$
 It suffices to calculate the Cheeger constants for the ``one-dimensional" graph $L=G/\Gamma$.
  By Proposition \ref{p:line graph} and $0\in L$,  we have
  \begin{align}\begin{split}\label{e:Cheeger radial}
  h(G)=h(L)=\inf\limits_{\underline{B}_r\subset L}\frac{|\partial\underline{B}_r |}{|\underline{B}_r|}
  =\inf\limits_{r\geq 0}\frac{|\partial\underline{B}_r |}{|\underline{B}_r|}.
  \end{split}\end{align}
  Combining Lemma \ref{radial-symmetry} with Lemma \ref{l:calculus lemma1}, together with (\ref{e:Cheeger radial}), we obtain Theorem \ref{t:spherically symmetric}.
\end{proof}

By (\ref{e:reduction to balls}) and the well-known Stolz-Ces\`{a}ro Theorem, see e.g. \cite[Theorem 1.22]{Lindenstrauss-Tzafriri77}, for $h_\infty(G)$, it is sufficient to compute the following quantity in next lemma.
This simplifies the computation in many cases.
\begin{lemma}[\cite{Lindenstrauss-Tzafriri77}]\label{l:calculus lemma2}
Let $L$ be a linear graph with infinite $\nu$-total measure.
If $$\lim_{r\to\infty}\frac{|\partial \underline{B}_r|_\mu-|\partial \underline{B}_{r-1}|_\mu}{|\underline{S}_r|_\nu}=A\in \R\cup \{\infty\},$$
then  $$\lim_{r\to\infty}\frac{|\partial \underline{B}_r|_\mu}{|\underline{B}_r|_\nu}=A.$$
\end{lemma}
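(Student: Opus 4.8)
The plan is to recognize this as a direct application of the Stolz--Ces\`aro theorem to the sequence of partial sums defining $|\underline{B}_r|_\nu$, and to express $|\partial \underline{B}_r|_\mu$ in a telescoping form so that the hypothesis ratio becomes exactly the ``difference quotient'' appearing in Stolz--Ces\`aro. First I would write down the relevant quantities explicitly for the linear graph $L$: by Definition~\ref{def:one-dimen} and the conventions recorded after it, $|\partial \underline{B}_r|_\mu = \mu_r$ (the single boundary edge $\{r,r+1\}$), $|\underline{B}_r|_\nu = \sum_{i=0}^r \nu_i$, and $|\underline{S}_r|_\nu = \nu_r$. Thus the denominator sequence $b_r := |\underline{B}_r|_\nu$ is strictly increasing (all $\nu_i > 0$) and tends to $+\infty$ by the infinite-measure hypothesis, so it is an admissible denominator for Stolz--Ces\`aro.

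Next I would identify the numerator sequence $a_r := |\partial \underline{B}_r|_\mu = \mu_r$ and observe that $b_r - b_{r-1} = \nu_r = |\underline{S}_r|_\nu$, so the Stolz--Ces\`aro difference quotient is
\[
\frac{a_r - a_{r-1}}{b_r - b_{r-1}} = \frac{\mu_r - \mu_{r-1}}{\nu_r} = \frac{|\partial \underline{B}_r|_\mu - |\partial \underline{B}_{r-1}|_\mu}{|\underline{S}_r|_\nu},
\]
which is precisely the quantity assumed to converge to $A$. I would then invoke the Stolz--Ces\`aro theorem \cite[Theorem~1.22]{Lindenstrauss-Tzafriri77}, in the version valid for limits in $\R \cup \{\infty\}$ with a strictly increasing denominator diverging to $+\infty$, to conclude $\lim_{r\to\infty} a_r/b_r = A$, i.e. $\lim_{r\to\infty} |\partial \underline{B}_r|_\mu/|\underline{B}_r|_\nu = A$. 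One should take $a_{-1} = |\partial \underline{B}_{-1}|_\mu = \mu_{-1} = 0$ (consistent with the convention $\mu_{-1}=0$ stated in the excerpt) and $b_{-1} = 0$ so that the $r=0$ term is covered, or equivalently just start the indexing at $r=1$ since a single initial term does not affect the limit.

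There is essentially no serious obstacle here: the statement is a translation of a standard real-analysis fact into the notation of linear graphs, and the only thing to be careful about is bookkeeping of the index conventions ($\mu_r$ versus $\mu_{r-1}$, the $\pm 1$ shifts, and the case $A = \infty$, for which one should cite the appropriate form of Stolz--Ces\`aro or give the two-line $\varepsilon$-argument directly). If one prefers a self-contained treatment rather than citing \cite{Lindenstrauss-Tzafriri77}, the proof of Stolz--Ces\`aro itself is short: for finite $A$, given $\varepsilon > 0$ choose $N$ with $|(a_r - a_{r-1})/(b_r-b_{r-1}) - A| < \varepsilon$ for $r > N$, sum from $N+1$ to $r$ to get $|a_r - a_N - A(b_r - b_N)| \le \varepsilon(b_r - b_N)$, divide by $b_r$ and let $r \to \infty$ using $b_r \to \infty$; the case $A = \infty$ is handled symmetrically. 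I would present the short citation version and leave the elementary verification of the index identities to the reader.
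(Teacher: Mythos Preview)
Your proposal is correct and matches the paper's approach exactly: the paper does not give a separate proof but simply records this lemma as an instance of the Stolz--Ces\`aro theorem, citing \cite[Theorem~1.22]{Lindenstrauss-Tzafriri77}. Your identification of $a_r=|\partial\underline{B}_r|_\mu=\mu_r$, $b_r=|\underline{B}_r|_\nu=\sum_{i=0}^r\nu_i$, and $b_r-b_{r-1}=|\underline{S}_r|_\nu$ is precisely the intended translation, and the extra self-contained argument you sketch is more detail than the paper itself provides.
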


Applying these above lemmas, we can calculate various Cheeger constants for spherically symmetric graphs, see Appendix.
\appendix

\section{Examples}
In this Appendix, we consider spherically symmetric graphs and calculate their Cheeger constants.
We always assume that the edge weight $\mu$ is trivial, i.e. $\mu_{xy}=1,$ for any $x\sim y.$
 For the weighted graph $G=(V,E,\nu,\mu),$ we set
 $$\deg(x):=\sum\limits_{y\in V}\mu_{xy},\quad
\textrm{Deg}(x):=\frac{\deg(x)}{\nu(x)}, \ \forall x\in V.$$ We introduce some definitions as follows.

\begin{defi}\label{n:notation} Let $G=(V,E,\nu,\mu)$ be the weighted graph.
\begin{enumerate}
\item If $\nu_x=1,\ \forall x\in V,$ then we call it a weighted graph with physical Laplacain and denote by $h:=h_{\mu,1}$ the physical Cheeger constant.
\item If $\nu_x=1,\ \forall x\in V,$ then we call $(V,E,\nu,\mu')$ a weighted graph with modified physical Laplacain and denote by $h:=h_{\mu',1}$ the modified Cheeger constant, where $\mu'_{xy}:=\mu_{xy}\rho'(x,y)$ with \begin{equation*}\label{e:intrinsic 1}
\rho'(x,y)=\frac{1}{\sqrt{\textrm{Deg}(x)}}\wedge\frac{1}{\sqrt{\textrm{Deg}(y)}}, \qquad \forall\ x\sim y.
\end{equation*}

\item If $\nu_x=\deg(x),\ \forall x\in V,$ then we call it a weighted graph with normalized Laplacian and denote by $h_N:=h_{\mu,\deg}$ the normalized Cheeger constant.
\item The Cheeger constants at infinity are defined similarly as in (\ref{eq:Cheegerconstant}), denoted by $h_{\infty},$ $h_{\infty,M}$, and $h_{\infty,N},$ respectively.
\end{enumerate}
\end{defi}

\subsection{Fujiwara's spherically symmetric trees}
Let $T$ be an infinite spherically symmetric tree with branching numbers $\{m_i\}_{i=0}^\infty$, i.e. for any $x\in \partial B_i(x_0)$, there are $m_i+1$ neighbors for $i\geq 1$ and $m_0$ neighbors for $i=0$.
Fujiwara \cite{Fujiwara96} proved that the essential spectrum of $T$ is $\{1\}$ if $m_i\to\infty (i\to\infty)$
which is called a rapidly branching tree.
By calculating the Cheeger constants at infinity for the radial symmetric trees,
we can show that this is also a necessary condition, see Corollary \ref{bran-num-ess-spectr}.
For the sake of convenience,
we introduce the following convention that $\sum_{i=0}^{-1}(\cdot)=0$ and $\prod_{i=0}^{-1}(\cdot)=1.$
We list the measures for various Laplacians in the model graph as follows:
\begin{enumerate}
  \item For the physical Laplacian, $\mu_r=\prod_{i=0}^rm_i$ and $\nu_r=\prod_{i=0}^{r-1}m_i$ for any $r\in \Z_{\geq 0}.$
  \item For the modified physical Laplacian, $\nu_r=\prod_{i=0}^{r-1}m_i$ for any $r\in \Z_{\geq 0}.$
      Since \begin{align*}\textrm{Deg}(r)=
      \begin{cases}m_r+1, \ \ &r\geq 1\\  m_0, & r=0,\end{cases}\end{align*}
       \begin{align*}\rho'(r,r+1)=
      \begin{cases}\frac{1}{\sqrt{m_{r+1}+1}}\wedge \frac{1}{\sqrt{m_r+1}} ,\ \ &r\geq 1\\
      \frac{1}{\sqrt{m_1+1}}\wedge\frac{1}{\sqrt{m_0}},
      & r=0. \end{cases}\end{align*}
   Hence, \begin{align*}
   \mu_r'=\mu_r\rho'(r,r+1)=
   \begin{cases}
      \prod_{i=0}^r m_i\left(\frac{1}{\sqrt{m_{r+1}+1}}   \wedge \frac{1}{\sqrt{m_r+1}}\right),\ \ & r\geq 1\\
   m_0\left(\frac{1}{\sqrt{m_1+1}}\wedge \frac{1}{\sqrt{m_0}}\right)
    & r=0.
   \end{cases}
   \end{align*}

\item For the normalized Laplacian, $\mu_r=\prod_{i=0}^rm_i$ and $\nu_r=\prod_{i=0}^{r-1}m_i+\prod_{i=0}^{r}m_i$ for any $r\in \Z_{\geq 0}.$
\end{enumerate}

\begin{example}\label{exam1}
  Let $T$ be the spherically symmetric tree of branching numbers $\{m_i\}_{i=0}^\infty.$ Then Cheeger constants and Cheeger constants at infinity can be calculated as follows:
  \begin{enumerate}
    \item \begin{equation*}h(G)=\inf_{r\geq0}\frac{\prod_{i=0}^rm_i}{\sum_{k=0}^r\prod_{i=0}^{k-1}m_i},\quad h_{\infty}(G)=\liminf_{r\to\infty}\frac{\prod_{i=0}^rm_i}{\sum_{k=0}^r\prod_{i=0}^{k-1}m_i}. \end{equation*}

   \item\begin{align*}
   h_M(G)=\min\Big\{
          &\inf_{r\geq1}
          \frac{\prod_{i=0}^rm_i}{\sum_{k=0}^r\prod_{i=0}^{k-1}m_i}
          \Big(\frac{1}{\sqrt{m_{r+1}+1}}\wedge \frac{1}{\sqrt{m_r+1}}\Big),\\
          & \frac{m_0}{\sqrt{m_1+1}}\wedge\sqrt{m_0}\Big\}.
              \end{align*}
        \begin{equation*}h_{\infty,M}(G)=\liminf_{r\to\infty}\frac{\prod_{i=0}^rm_i}{\sum_{k=0}^r\prod_{i=0}^{k-1}m_i}
          \Big(\frac{1}{\sqrt{m_{r+1}+1}}\wedge \frac{1}{\sqrt{m_r+1}}\Big).\end{equation*}
    \item \begin{equation*}h_N(G)=\inf_{r\geq 0}\frac{\prod_{i=0}^rm_i}{2\sum_{k=0}^{r-1}\prod_{i=0}^km_i+\prod_{i=0}^rm_i},\end{equation*}
    \begin{equation}\label{e:example1 eq1}h_{\infty,N}(G)=\lim\limits_{r\to\infty}\frac{\prod_{i=0}^rm_i}{2\sum_{k=0}^{r-1}\prod_{i=0}^km_i+\prod_{i=0}^rm_i}.\end{equation}
  \end{enumerate}
\end{example}

\begin{coro}\label{bran-num-ess-spectr}
  For the spherically symmetric tree $T$ with branching numbers $\{m_i\}_{i=0}^\infty,$  $$h_{\infty,N}(G)=1\quad \mathrm{if}\ \mathrm{and}\ \mathrm{only}\ \mathrm{if}\ \quad \lim_{i\to\infty} m_i=\infty.$$
\end{coro}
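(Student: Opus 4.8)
The plan is to apply formula \eqref{e:example1 eq1} from Example \ref{exam1}, namely
\[
h_{\infty,N}(G)=\lim_{r\to\infty}\frac{\prod_{i=0}^r m_i}{2\sum_{k=0}^{r-1}\prod_{i=0}^k m_i+\prod_{i=0}^r m_i},
\]
and to analyze the limit by dividing numerator and denominator by $\prod_{i=0}^r m_i.$ Writing $P_k:=\prod_{i=0}^k m_i,$ this turns the quotient into $\left(1+2\sum_{k=0}^{r-1}\tfrac{P_k}{P_r}\right)^{-1}.$ Since $\tfrac{P_k}{P_r}=\prod_{i=k+1}^r m_i^{-1},$ the whole question reduces to deciding when the ``tail sum'' $R_r:=\sum_{k=0}^{r-1}\prod_{i=k+1}^r m_i^{-1}$ tends to $0.$ The corollary is then equivalent to the elementary claim: $R_r\to 0$ if and only if $m_i\to\infty.$

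For the direction $m_i\to\infty\Rightarrow R_r\to0,$ the key step is to split the sum at a fixed index. Given $\varepsilon>0,$ choose $N$ so that $m_i\geq 2/\varepsilon$ (say) for $i>N$; then the last few terms of $R_r$ (those with $k\geq N$) form a geometric-type tail bounded by $\sum_{j\geq1}(\varepsilon/2)^j\le\varepsilon,$ while the finitely many initial terms ($k<N$) are each multiplied by $\prod_{i=N+1}^r m_i^{-1}\to 0$ as $r\to\infty$ because infinitely many factors exceed $2$ (indeed $m_i\to\infty$ forces $\prod_{i>N}^{r}m_i\to\infty$). Hence $\limsup_r R_r\le\varepsilon$ for every $\varepsilon,$ so $R_r\to0$ and $h_{\infty,N}(G)=1.$

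For the converse, I argue by contraposition: suppose $m_i\not\to\infty,$ so there is a subsequence $i_\ell$ with $m_{i_\ell}\le M$ for some constant $M.$ Then along $r=i_\ell$ the single term $k=r-1$ of $R_r$ already satisfies $\prod_{i=r}^r m_i^{-1}=m_r^{-1}\ge 1/M,$ so $R_r\ge 1/M$ does not tend to $0$; consequently $h_{\infty,N}(G)\le\left(1+2/M\right)^{-1}<1$ along this subsequence, and since the limit in \eqref{e:example1 eq1} exists it must be $<1.$ (One should note here that \eqref{e:example1 eq1} asserts the limit exists; if one only had a $\liminf$ the argument would still give $\liminf<1$, which suffices to conclude $h_{\infty,N}(G)\ne1.$) The main obstacle is simply organizing the split-the-sum estimate cleanly so that the finite head and the geometric tail are each controlled uniformly in $r$; there is no deep difficulty, only care with the order of choosing $N,$ then $\varepsilon,$ then letting $r\to\infty.$
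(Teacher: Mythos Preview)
Your proof is correct. Both you and the paper reduce the question to showing that $R_r:=\sum_{k=0}^{r-1}P_k/P_r\to 0$ if and only if $m_i\to\infty$, and your converse (the single term $k=r-1$ gives $R_r\geq 1/m_r$) is exactly the paper's argument.

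The forward direction differs in method. The paper invokes the Stolz--Ces\`aro theorem: since $P_r\to\infty$ is increasing and
\[
\frac{P_{r-1}}{P_r-P_{r-1}}=\frac{1}{m_r-1}\to 0,
\]
one concludes $\sum_{k\leq r-1}P_k/P_r\to 0$ immediately. Your argument is instead a hands-on $\varepsilon$-splitting: a geometric tail bound for $k\geq N$ and the vanishing of the finitely many head terms. Your route is more elementary and self-contained, avoiding any appeal to Stolz--Ces\`aro; the paper's route is shorter once that lemma is available. Both are perfectly adequate here.

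One small wording point: you say ``first choose $N$, then $\varepsilon$,'' but in your actual argument (and correctly so) you first fix $\varepsilon$ and then choose $N=N(\varepsilon)$ so that $m_i\geq 2/\varepsilon$ for $i>N$. Also, for the geometric tail bound $\sum_{j\geq 1}(\varepsilon/2)^j\leq \varepsilon$ you implicitly need $\varepsilon\leq 1$; this is harmless but worth stating.
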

\begin{proof}
  Assume that $\lim\limits_{i\to\infty} m_i=\infty.$ By \eqref{e:example1 eq1}, we know that $h_{\infty,N}=1$ is equivalent to
  \begin{equation}\label{e:example1 eq2}\limsup_{r\to\infty}\frac{\sum_{k=0}^{r-1}\prod_{i=0}^k m_i}{\prod_{i=0}^rm_i}=0,\quad \mathrm{i.e.}\quad\lim_{r\to\infty}\frac{\sum_{k=0}^{r-1}\prod_{i=0}^k m_i}{\prod_{i=0}^rm_i}=0.\end{equation} By Stolz-Ces\`{a}ro theorem  \cite[Theorem 1.22]{Lindenstrauss-Tzafriri77}, this follows from $$\lim_{r\to\infty}\frac{\prod_{0}^{r-1}m_i}{\prod_{0}^{r}m_i-\prod_{0}^{r-1}m_i}=\lim_{r\to\infty}\frac{1}{m_r-1}=0.$$

  For the other direction, $h_{\infty,N}=1$ implies \eqref{e:example1 eq2}. It is easy to see that
  $$\frac{1}{m_r}\leq \frac{\sum_{k=0}^{r-1}\prod_{i=0}^k m_i}{\prod_{i=0}^rm_i}.$$ Taking the limit $r\to\infty,$ we have $m_r\to \infty.$ This proves the theorem.
\end{proof}

\subsection{Wojciechowski's anti-trees}
The second class of examples are so-called anti-trees introduced by Wojciechowski \cite{Wojciechowski11}. We call a graph an anti-tree if every vertex in $S_r$ is connected to all vertices in $S_{r+1}\cup S_{r-1}$ and to none in $S_r$. It was pointed out that $\sharp S_r=(r+1)^2$ is of particular interest in \cite{Bauer-Keller-Wojciechowski15}. In general case,  we consider $\sharp S_r=(r+1)^a$ for $a\in\mathbb N$, and we call it the anti-tree of order $a$, denoted by $G_a$.
\begin{example}\label{exam2}
  Let $G_a$ is an antitree of order $a\in \mathbb N.$ Then
 \begin{center}
\renewcommand\tabcolsep{28pt}
\newcommand{\zz}[2]{\rule[#1]{0pt}{#2}}
\begin{tabular}{|c|c|c|c|}
\hline\zz{-10pt}{28pt}

{\hfill $a$}                                 & {\hfill $a=1$}             &{\hfill $a=2$}          & {\hfill $a\geq 3$}              \\
\hline \zz{-10pt}{28pt}  $h(G)$             &     $2$                    & $4$                    &    $2^a$                         \\
\hline \zz{-10pt}{28pt}  $h_\infty(G)$      &     $2$                    &$\infty$                &      $\infty$                     \\
\hline \zz{-10pt}{28pt}  $h_M(G)$           &     $1$                   &$\frac{4}{\sqrt{10}}$    &    $\frac{2^a}{\sqrt{1+3^a}}$       \\
\hline \zz{-10pt}{28pt}  $h_{\infty,M}(G)$  &    $0$                    &  $\frac{3}{\sqrt{2}}$   &    $\infty$                     \\
\hline \zz{-10pt}{28pt}  $h_N(G)$           &     $0$                    &   $0$                  &      $0$             \\
\hline \zz{-10pt}{28pt}  $h_{\infty,N}(G)$    &     $0$                    &    $0$                 &       $0$            \\
\hline
\end{tabular}
\end{center}
\end{example}
\begin{proof}
  For any $a\in \N$ and $r\in \Z_{\geq 0}$, we consider three cases as in Definition \ref{n:notation}:
  \begin{enumerate}[1.]
    \item For Cheeger constants for physical Laplacians, $\mu_r=(r+1)^a(r+2)^a$ and $\nu_r=(r+1)^a.$
    \item  For the modified Cheeger constant for physical Laplacians, $\nu_r=(r+1)^a$ and
        $\textrm{Deg}(r)=\frac{\deg(r)}{\nu_r}=\frac{r^a(r+1)^a+(r+1)^a(r+2)^a}{(r+1)^a}=r^a+(r+2)^a$.
        \[\rho'(r,r+1)=\min\left\{\frac{1}{\sqrt{\textrm{Deg}(r)}}, \frac{1}{\sqrt{\textrm{Deg}(r+1)}}\right\}=\frac{1}{\sqrt{(r+1)^a+(r+3)^a}}.\]
         Hence, $\mu'_r=\mu_r\rho'(r,r+1)=\frac{(r+1)^a(r+2)^a}{\sqrt{(r+1)^a+(r+3)^a}}$ . 
    \item For the normalized Cheeger constant of normalized Laplacians, $\mu_r=(r+1)^a(r+2)^a$ and $\nu_r=r^a(r+1)^a+(r+1)^a(r+2)^a.$
    \end{enumerate}
    Using Theorem \ref{t:spherically symmetric}, Lemma \ref{l:calculus lemma1} and Lemma \ref{l:calculus lemma2}, we prove the results by the basic calculus.
    \begin{enumerate}[1.]
      \item For Cheeger constants for physical Laplacians, $|\partial \underline{B}_r|=\mu_r=(r+1)^a(r+2)^a,$
      $|\underline{B}_r|=\sum_{i=0}^r(i+1)^a$ and $|\underline{S}_r|=(r+1)^a.$
           Let
          \[f(r)=\frac{|\partial \underline{B}_r|}{|\underline{B}_r|}=\frac{(r+1)^a(r+2)^a}{\sum_{i=0}^r(i+1)^a}.\]

          For $a=1$, $f(r)=\frac{(r+1)(r+2)}{\sum_{i=0}^r(i+1)}=2=2^a$.

          For $a\geq 2$, since
          \begin{align}\begin{split}\label{mono-f(r)}
          f(r)\geq \frac{(r+1)^a(r+2)^a}{\sum_{i=0}^r(i+1)^a}
           \geq \frac{(r+1)^a(r+2)^a}{(r+1)^{a+1}}
          \geq (r+2)^{a-1},
          \end{split}\end{align}
          $f(r)\geq f(0)=2^a$ for $r\geq 2$. Hence, $\inf\limits_{r\geq 0}f(r)=\min\{f(1), f(0)\}=2^a$.
          By Theorem \ref{t:spherically symmetric}, we have
          \begin{eqnarray*}
            h(G)=\inf_{r\geq 0}\frac{|\partial \underline{B}_r|}{|\underline{B}_r|}= \inf_{r\geq 0}f(r)=2^a.
          \end{eqnarray*}
          Using Lemma \ref{l:calculus lemma2}, Lemma \ref{l:calculus lemma1}, and Theorem \ref{t:spherically symmetric}, we have
          \begin{align*}
          h_{\infty}(G)&=\lim_{r\to\infty}\frac{|\partial \underline{B}_r|-|\partial \underline{B}_{r-1}|}{|\underline{S}_r|}\\
          &=\lim_{r\to\infty}\frac{(r+1)^a(r+2)^a-r^a(r+1)^a}{(r+1)^a}\\ &=\begin{cases}
          2,\ \ &a=1 \\
          \infty, &a\geq 2.
          \end{cases}
          \end{align*}
       \item For modified Cheeger constants of physical Laplacians, $|\underline{S}_r|=(r+1)^a,$
        \[|\partial \underline{B}_r|=\mu_r'=\frac{(r+1)^a(r+2)^a}{\sqrt{(r+1)^a+(r+3)^a}},\ \ |\underline{B}_r|=\sum_{i=0}^r(i+1)^a.\] Take
        \[f(r):=\frac{|\partial \underline{B}_r|}{|\underline{B}_r|}=\frac{(r+1)^a(r+2)^a}{\sum_{i=0}^r(i+1)^a\sqrt{(r+1)^a+(r+3)^a}}.\]

     For $a=1$, $f(r)=\frac{(r+1)(r+2)}{\sum_{i=0}^r(i+1)\sqrt{(r+1)+(r+3)}}=\frac{2}{\sqrt{2r+4}}$, so $\inf\limits_{r\geq 0}f(r)=0$.

     For $a=2$, by $\sum_{i=0}^r(i+1)^2=\frac{1}{6}(r+1)(r+2)(2r+3)$, we obtain
     \[
     f(r)=\frac{6(r+1)(r+2)}{(2r+3)\sqrt{(r+1)^2+(r+3)^2}}.
     \]
     Let $y=\frac{r+2}{r+1}$. Then $r\geq 0$ gives $1<y\leq 2$. Note that
     $h(y)=\frac{6y}{(y+1)\sqrt{1+(2y-1)^2}},$
     and
     \[\frac{d}{dy}h(y)=-\frac{3 \sqrt{2} \left(-1 + y - y^2 + 2 y^3\right)}{(1 + y)^2 \left(1 - 2 y + 2 y^2\right)^\frac{3}{2}}<0.\]
     Hence, $f(r)=h(y)\geq h(2)=f(0)$ for $r\geq 0$.

     For $a=3$, by $\sum_{i=0}^r(i+1)^3=\frac{1}{4}(r+1)^2(r+2)^2$, $f(r)=\frac{4(r+1)(r+2)}{\sqrt{(r+1)^3+(r+3)^3}}$,
     \begin{align*}
     f'(r)&=\frac{\sqrt{2}\left(27+ 25r+7r^2+r^3\right)}{\left(7+4r+ r^2\right)\sqrt{14 + 15 r + 6 r^2 + r^3}}>0.\\
     \end{align*}
     Hence, $f(r)\geq f(0)$ for $r\geq 0$.

     For $a\geq4$, $f(0)=\frac{2^a}{\sqrt{1+3^a}}\leq \frac{2^a}{\sqrt{3}^a}$ and \[f(r)\geq\frac{(r+1)^a(r+2)^a}{(r+1)^{a+1}\sqrt{2(r+3)^a}}
          \geq \frac{(r+2)^{a-1}}{\sqrt{2}(r+3)^{\frac{a}{2}}}.\]
     For $f(r)\geq f(0)$, it suffices to prove
     $\frac{(r+2)^{a-1}}{\sqrt{2}(r+3)^{\frac{a}{2}}}\geq \frac{2^a}{\sqrt{3}^a}$, that is,
     \begin{align}\begin{split}\label{equi-form}
     \left(\frac{r+2}{\sqrt{r+3}}\cdot \frac{\sqrt{3}}{2}\right)^a \frac{1}{r+2}\geq \sqrt{2}.
     \end{split}\end{align}
     Since $\frac{r+2}{\sqrt{r+3}}\cdot \frac{\sqrt{3}}{2}=\left(\sqrt{r+3}-\frac{1}{\sqrt{r+3}}\right)\frac{\sqrt{3}}{2}\geq \left(\sqrt{3}-\frac{1}{\sqrt{3}}\right)\frac{\sqrt{3}}{2}=1$ for $r\geq 0$, it suffices to prove that (\ref{equi-form}) holds for $a=4$. Let \[g(r)=\left(\frac{r+2}{\sqrt{r+3}}\cdot \frac{\sqrt{3}}{2}\right)^4 \frac{1}{r+2}=\frac{9}{16}\frac{(r+2)^3}{(r+3)^2}.\] Then $g(r)$ is a increasing function with $g(2)=\frac{36}{25}\geq \sqrt{2} $. Hence, (\ref{equi-form}) holds for $r\geq2$, which yields $f(r)\geq f(0)$ for $r\geq 2$. Note that
     \[
     f(1)
     =\frac{6^a}{(1+2^a)\sqrt{2^a+4^a}}
     \geq\frac{6^a}{2^{a+1}\sqrt{2}\cdot  2^a }  =\frac{1}{2\sqrt{2}}\left(\frac{3}{2}\right)^a\geq \left(\frac{2}{\sqrt{3}}\right)^a\geq f(0)
     \]
    holds for $a\geq 4$.
    Hence, $f(r)\geq f(0)$ for $r\geq 0$.

    Therefore, we obtain
        \begin{eqnarray*}
            h_M(G)=\inf_{r\geq 0}\frac{|\partial \underline{B}_r|}{|\underline{B}_r|}=\inf_{r\geq 0}f(r)=f(0)=\frac{2^a}{\sqrt{1+3^a}}.
          \end{eqnarray*}

        For $a=1$, $\lim\limits_{r\to\infty}f(r)=\lim\limits_{r\to \infty}\frac{1}{\sqrt{2r+1}}=0$.
        For $a=2$, \[\lim\limits_{r\to\infty}f(r)=\lim\limits_{r\to\infty}\frac{6(r+1)(r+2)}{(2r+3)\sqrt{(r+1)^2+(r+3)^2}}=\frac{3}{\sqrt{2}}.\]
        For $a\geq 3$, setting $h(r):=\frac{(r+2)^a}{\sqrt{(r+1)^a+(r+3)^a}}$, we have
        \begin{align*}
        \frac{|\partial \underline{B}_r|-|\partial \underline{B}_{r-1}|}{|\underline{S}_r|}
        &=\frac{(r+2)^a}{\sqrt{(r+1)^a+(r+3)^a}}-\frac{r^a}{\sqrt{r^a+(r+2)^a}}\\
        &=h(r)-h(r-1)+\frac{(r+1)^a-r^a}{\sqrt{r^a+(r+2)^a}}.
        \end{align*}
        By $h'(r)\geq 0$ and
        $\lim\limits_{r\to\infty}\frac{(r+1)^a-r^a}{\sqrt{r^a+(r+2)^a}}=\infty,$
        we obtain
        \[
        h_{\infty,M}(G)=\lim_{r\to\infty}\frac{|\partial \underline{B}_r|-|\partial \underline{B}_{r-1}|}{|\underline{S}_r|}=\infty.\]
       \item For the normalized Cheeger constants of normalized Laplacians,
       \[|\partial \underline{B}_r|=\mu_r=(r+1)^a(r+2)^a,\ \  |\underline{S}_r|=r^a(r+1)^a+(r+1)^a(r+2)^a,\]
       and
       \begin{align*}
       |\underline{B}_r|
       &=\sum_{i=0}^r\left[i^a(i+1)^a+(i+1)^a(i+2)^a\right]\\
       &=2\sum_{i=1}^ri^a(i+1)^a+(r+1)^a(r+2)^a.
       \end{align*}
       Let
       \[f(r)=\frac{|\partial \underline{B}_r|}{|\underline{B}_r|}=\frac{(r+1)^a(r+2)^a}{2\sum_{i=1}^ri^a(i+1)^a+(r+1)^a(r+2)^a}.\]
      Since
      \begin{align*}
      \sum_{i=1}^r(i+1)^ai^a\geq \sum_{i=2}^ri^{2a}\geq \sum_{i=2}^r\int_{i-1}^i x^{2a}\geq \int_1^r x^{2a}dx=\frac{r^{2a+1}-1}{2a+1},
      \end{align*}
      \[
      0<f(r)\leq \frac{2a+1}{2}\cdot\frac{(r+1)^a(r+2)^a}{r^{2a+1}-1+(r+1)^a(r+2)^a}\to 0(r\to \infty).
      \]
      So we obtain $\inf\limits_{r\geq 0}f(r)=0$.
      Hence, \[h_N(G)=\inf_{r\geq 0}\frac{|\partial \underline{B}_r|}{|\underline{B}_r|}=\inf\limits_{r\geq 0}f(r)=0.\]

        Moreover, it follows from $$\lim_{r\to\infty}\frac{|\partial \underline{B}_r|-|\partial \underline{B}_{r-1}|}{|\underline{S}_r|}=\lim_{r\to\infty}\frac{(r+1)^a(r+2)^a-r^a(r+1)^a}{r^a(r+1)^a+(r+1)^a(r+2)^a}=0$$ that
        $$h_{\infty,N}(G)=0.$$
    \end{enumerate}

\end{proof}

\textbf{Acknowledgements.} We thank Frank Bauer, Huabin Ge, and Wenfeng Jiang for many discussions and suggestions on $p$-Laplacian eigenvalues on graphs. B.H. is supported by NSFC, no.11831004 and no. 11826031. L. W. is supported by NSFC, no. 11671141.

\bibliographystyle{alpha}
\bibliography{p-laplacian-eigenvalue}

%
\end{document}